 \newtheorem{theorem}{Theorem}[section]
 \newtheorem{lemma}[theorem]{Lemma}
 \newtheorem{proposition}[theorem]{Proposition}
 \theoremstyle{definition}
 \newtheorem{definition}[theorem]{Definition}
 \theoremstyle{definition}
 \newtheorem{example}[theorem]{Example}
 \numberwithin{equation}{section}
\newcommand{\ben}{\begin{equation}}
\newcommand{\een}{\end{equation}}
\newcommand{\integer}{\ensuremath{{\mathbb Z}}}
\newcommand{\real}{\ensuremath{{\mathbb R}}}
\newcommand{\DD}{{\mathcal D}}
\newcommand{\id}{{\text{id}}}
\newcommand{\FF}{{\mathcal F}}
\newcommand{\GG}{{\mathcal G}}
\newcommand{\CC}{\mathcal{C}}
\newcommand{\MM}{\mathcal{M}}
\newcommand{\Hom}{\mathrm{Hom}}
\newcommand{\Ext}{\mathrm{Ext}}
\newcommand{\target}{\mathsf{t}}
\newcommand{\source}{\mathsf{s}}
\newcommand{\ident}{\mathsf{e}}
\newcommand{\twoarrows}{\rightrightarrows}
\newcommand{\To}{\longrightarrow}
\newcommand{\Groupoid}{{\mathfrak{TopGpds}}}
\newcommand{\Gerbe}{{\mathfrak{LftGrbs}}}
\newcommand{\Map}{\ensuremath{{\mathrm{Map}}}}
\newcommand{\Vect}{\ensuremath{{\mathrm{Vect}}}}
\newcommand{\Ker}{\mbox{\rm Ker\,}}
\newcommand{\Coker}{\mbox{\rm Coker\,}}
\newcommand{\Tot}{\mbox{\rm Tot\,}}
\newcommand\Rep{\operatorname{Rep}}
\newcommand{\End}{\operatorname{End}}
\newcommand{\im}{\mathop{\rm Im\,}}
\renewcommand{\Im}{\im}
\newcommand{\Topab}{\ensuremath{\mathfrak{Topab}}}
\newcommand{\SMGrbs}{\ensuremath{\mathfrak{SM}\text{-}\mathfrak{Grbs}}}
\newcounter{commentcounter}
\newcommand{\commentb}[1]
{\stepcounter{commentcounter}
  \textbf{Comment \arabic{commentcounter} (by Bernardo)}: 
{\textcolor{blue}{#1}} }
\begin{document}

\title[Pontrjagin duality on multiplicative Gerbes]{Pontrjagin duality on multiplicative Gerbes}

\thanks{Blanco acknowledges the support of Minciencias through `Convocatoria 785 - Convocatoria de Doctorados Nacionales de 2017'.
Uribe acknowledges the financial support of  Conacyt
through grant number CB-2017-2018-A1-S-30345-F-3125. This project has been carried out within a Research group linkage programme Greifswald-Barranquilla-Medell\'in sponsored by the Alexander Von Humboldt Foundation.}
\author{Jaider Blanco}
\address{Departamento de Matem\'{a}ticas y Estad\'istica, Universidad del Norte, Km.5 V\'ia Antigua a Puerto Colombia, 
Barranquilla, Colombia.}
\email{jaiderb@uninorte.edu.co}
\author{Bernardo Uribe}
\address{Departamento de Matem\'{a}ticas y Estad\'istica, Universidad del Norte, Km.5 V\'ia Antigua a Puerto Colombia, 
Barranquilla, Colombia.}
\address{Max Planck Institut f\"ur Mathematik\\
		Vivatsgasse 7\\ 
		Bonn 53115, Germany}
	\email{bjongbloed@uninorte.edu.co, uribe@mpim-bonn.mpg.de}

\author{Konrad Waldorf}
\address{Universit\"at Greifswald, Institut f\"ur Mathematik und Informatik, Walther-Rathenau Str. 47, 17487 Greifswald, Germany.}
\email{konrad.waldorf@uni-greifswald.de }

\subjclass[2020]{
(primary) 53C08, 55U30, (secondary) 55N30}
\keywords{Multiplicative gerbe, topological group, Pontrjagin duality, Morita equivalence}
\begin{abstract}
We use Segal-Mitchison's cohomology of topological groups
to define a convenient model for topological gerbes. We introduce multiplicative gerbes
over topological groups in this setup and we define its representations. For a specific choice of representation,
we construct its category of endomorphisms and we show that it induces a new multiplicative gerbe over another
topological group. This new induced group is fibrewise Pontrjagin dual to the original one and therefore
we  called the pair of multiplicative gerbes `Pontrjagin dual'. We show that
Pontrjagin dual multiplicative gerbes have equivalent categories of representations and moreover, 
we show that their monoidal centers are equivalent. Examples of Pontrjagin dual multiplicative gerbes
over finite and discrete, as well as compact and non-compact Lie groups are provided.

\end{abstract}

\maketitle

\section*{Introduction} 
Gerbes with structure group $U(1)$ over a manifold are geometrical objects whose isomorphism classes are classified
by degree three integer cohomology classes \cite{Brylinski}.  They could be thought of as groupoid extensions of the underlying
manifold by the groupoid $U(1)[1]$ \cite{Laurent-Gengoux}, as $U(1)$-bundle gerbes \cite{Murray-1996}, or as principal $U(1)[1]$-2-bundles
 \cite{Waldorf-4}. In each of these versions,  gerbes over manifolds define a monoidal 2-category. Thus, one may consider
weak monoid objects \cite{BaezLauda} in the 2-category of gerbes. These weak monoid objects in gerbes are what are known in the literature as multiplicative gerbes \cite{brylinski4,Carey_Bai-Ling, Waldorf-multiplicative}, and of
particular interest has been the study of multiplicative gerbes over compact, simple and simply connected Lie groups \cite{Meinrenken, gawedzki2, Carey_Bai-Ling, Waldorf-multiplicative,gawedzki9} due, among others, to its relation with Chern-Simons theories
and Wess-Zumino-Witten models. The classification of isomorphism classes of multiplicative gerbes
over compact Lie groups is well understood; isomorphic multiplicative gerbes over a fixed compact Lie group 
are classified by degree four integer cohomology classes of its classifying space, and every known equivalence relation between  multiplicative
gerbes over compact Lie groups involves isomorphic underlying Lie groups.
In the paper we address the question how classify 
multiplicative gerbes up to Morita equivalence. Namely, may multiplicative
gerbes over non-isomorphic Lie groups have equivalent categories of representations? Let us elaborate.

Recall that multiplicative gerbes are weak monoid objects in the 2-category of gerbes, and monoid
objects in a 2-category may act on objects of the same 2-category. Therefore
we may define the 2-category of representations of a multiplicative gerbe as the 2-category of its weak module objects \cite{BaezLauda}.  Now the previous questions make sense; in particular, we will call multiplicative gerbes \emph{Morita equivalent} if they have equivalent 2-categories of representations.
Clearly,  isomorphic multiplicative gerbes over isomorphic Lie groups will be Morita equivalent. In this work, we are looking for Morita equivalent multiplicative gerbes defined over non-isomorphic Lie groups.

When considering multiplicative gerbes over finite groups, the previous question has been completely solved
\cite{Naidu, Uribe}. A multiplicative gerbe over a finite group defines a pointed fusion category \cite{ENO}
and its module categories (representations) are well understood \cite{Ost, Ost-2, Naidu}. The endomorphism
category of particular module categories induce again pointed fusion categories of the same Frobenius-Perron dimension, and in these cases
the pointed fusion categories become Morita equivalent \cite{Naidu}. An explicit description
of the Morita equivalence classes in terms of groups and cocycles has been obtained by the second author in \cite{Uribe}.

This work generalizes the ideas and procedures carried out to classify Morita equivalence classes of
multiplicative gerbes over finite groups to the more general case of topological groups. To make this generalization
possible we construct a model for gerbes over topological spaces similar to the one done for the finite group case.
We draw on the cohomology theory for topological groups constructed by Segal and Mitchison \cite{Segal-Cohomology},
and we define our model for gerbes using spaces and degree 2 cocycles over the spaces. We call these gerbes
\emph{Segal-Mitchison gerbes}.

We emphasize here that the cohomology theory of topological groups defined by Segal and Mitchison incorporates
the information about the topological invariants of these groups, as well as the cohomological
information of the complex of continuous cochains of the group. This cohomology theory
permits to treat in equal footing finite, discrete, compact and non-compact Lie groups.

We define the monoidal 2-category of Segal-Mitchison gerbes, we consider weak monoid objects in this category
as our model for multiplicative gerbes, and we define their 2-categories of representations. 
We take a representation of a multiplicative gerbe satisfying a cohomological condition  expressed with the help of the Lyndon-Hochschild-Serre spectral sequence, and we consider the monoidal category of its endomorphism.
We show that this monoidal category of endomorphisms defines a new multiplicative Segal-Mitchison gerbe.
This new multiplicative Segal-Mitchison gerbe is called the `fibrewise Pontrjagin dual' of the original one, because
it is constructed throughout a dualization procedure over a normal and  abelian subgroup of the  original multiplicative gerbe.
This dualization produces a new group with a normal and abelian group which is the Pontrjagin dual of the 
normal and abelian group of the original group.

We note here that the monoidal category of endomorphisms is  a gerbe of a particular kind called
\emph{Lifting bundle gerbe} \cite{Murray-1996}. We include an Appendix where we summarize the properties of lifting bundle gerbes, and we show that  Segal-Mitchison gerbes and lifting bundle gerbes form canonically equivalent monoidal 2-categories.

We present an array of examples of Pontrjagin dual multiplicative gerbes, including finite, discrete, compact and non-compact Lie
groups, to demonstrate that the model of Segal-Mitchison gerbes is ample enough to encompass them all. It is important
to notice that the classical Pontrjagin duality on abelian topological groups could be recovered as the fibrewise Pontrjagin
dual of the trivial multiplicative Segal-Mitchison gerbe defined by the abelian groups once they fiber over a trivial group.

Then we proceed to show that the 2-categories of representations of fibrewise Pontrjagin dual multiplicative
Segal-Mitchison gerbes are equivalent, and therefore justify calling them `Morita equivalent'. We show furthermore that this equivalence induces an equivalence of the categorical centers 
 of the fibrewise Pontrjagin dual multiplicative Segal-Mitchison gerbes.

We have thus succeeded in finding sufficient conditions for two multiplicative Segal-Mitchison gerbes to be Morita equivalent.
 We conjecture that these conditions are also necessary, as it is the case for multiplicative gerbes over finite groups.
Unfortunately, the foundations underlying the categorial constructions performed in this work with topological
groups are not as developed as the ones over fusion categories. These foundations would have to be first developed 
in order to be able to generalize the procedures and proofs that have been performed using pointed fusion categories.
Thus, the classification of Morita equivalence classes of multiplicative Segal-Mitchison gerbes remains open. Nevertheless, the results of  this work 
offer a clear path on how to carry out the complete classification.

We start this work in \S \ref{section cohomology} with and introduction to Segal-Mitchison's cohomology of topological groups; we summarize its most important features, we show its relation to the cohomology of continuous cochains and
to the cohomology of the classifying space of the group, and we introduce the Lyndon-Hochschild-Serre spectral sequence for this cohomology theory. In 
\S \ref{section SM gerbes} we define the monoidal 2-category of Segal-Mitchison gerbes, in \S \ref{section multiplicative}
we define multiplicative Segal-Mitchison gerbes as weak monoid objects in Segal-Mitchison gerbes, and
in \S \ref{section representations} we define the 2-category of representations of multiplicative Segal-Mitchison gerbes as weak module objects over weak monoid objects in Segal-Mitchison gerbes.
In \S \ref{section pontrjagin} we introduce the fibrewise Pontrjagin duality of multiplicative Segal-Mitchison gerbes
and we present several examples of this duality. Finally in \S \ref{section Morita} we show that fibrewise Pontrjagin dual
multiplicative Segal-Mitchison gerbes have equivalent 2-categories of representations, thus becoming Morita equivalent.

The final two sections include a summary on crossed modules and 2-groups in \S \ref{Appendix} (Appendix A) and
a summary on Lifting bundle gerbes and their relation to Segal-Mitchison gerbes in \S \ref{Appendix B} (Appendix B).

\section{Segal-Mitchison cohomology} \label{section cohomology}

G. Segal in \cite{Segal-Cohomology} defined cohomology groups suited to understand
the cohomology of a topological group with coefficients in a  topological abelian group.
The description of this cohomology theory is closely related to the cohomology of a
discrete group with coefficients in an abelian group.
In what follows we will review the definition of Segal that appears in \cite{Segal-Cohomology}.

\subsection{Coefficients} The coefficients of the Segal-Mitchison cohomology are in the category $\Topab$ of compactly generated and
locally contractible Hausdorff topological abelian groups and continuous homomorphisms. A sequence
$$A' \stackrel{i}{\To} A \stackrel{p}{\To} A''$$
in $\Topab$ is called a short exact sequence if $i$ embeds $A'$ as a closed subgroup in $A$, $p$ induces
an isomorphism of topological groups $A/A' \cong  A''$, and $p: A \to A''$ is a principal $A'$-bundle.
If $A \in \Topab$, then $EA$ and $BA:= EA / A$ are also in $\Topab$ and
$$A \to EA \to BA$$
is a short exact sequence. Recall that $EA$ is a contractible topological space obtained from the nerve of the 
contractible category 
whose space of objects is $A$ and whose spaces of morphisms is $A \times A$, one morphism for each pair of elements in $A$.
Since $A$ is abelian, the contractible space $EA$ may be endowed with the structure of a topological abelian group \cite[Appendix A]{Segal-Cohomology}.

The previous construction may be iterated, thus giving the short exact sequences in $\Topab$
$$B^lA \stackrel{i_l}{\To} EB^lA \stackrel{p_l}{\To} B^{l+1}A$$
for every $l \geq 0$. Composing the projections $EB^lA \stackrel{p_l}{\To} B^{l+1}A$ with the embeddings $B^{l+1}A \stackrel{i_{l+1}}{\To} EB^{l+1}A$
we obtain continuous homomorphisms $ EB^lA \stackrel{\partial_l}{\To} EB^{l+1}A$ which can be assembled to produce
a resolution of $A$ by contractible groups in $\Topab$
$$A \hookrightarrow EA \stackrel{\partial_0}{\To} EBA \stackrel{\partial_1}{\To} EB^2A\stackrel{\partial_2}{\To} EB^3A \stackrel{\partial_3}{\To} \cdots.$$
For simplicity, the subindex in $\partial_i$ will be omitted in what follows.

\subsection{Cohomology of spaces}
Let $X$ be a paracompact space and consider the complex
$$C^l(X, \underline{A}) : = \Map(X, EB^lA)$$
with differential $\partial f:= \partial \circ f$.
The space of maps is endowed with the compact-open topology and has a canonical group structure. Hence, 
$C^l(X, \underline{A})$ belongs to $\Topab$.

\begin{definition} Let $X$ be a paracompact space.
The cohomology of the complex $(C^*(X, \underline{A}), \partial)$ is called the {\it{Segal-Mitchison cohomology}}
of $X$ with coefficients in $A$ and will be denoted by
$$H^*(X, \underline{A}) := H^* (C^*(X, \underline{A}), \partial).$$
\end{definition}
Note that the cocycles $Z^l(X, \underline{A})$ consist of maps $X \stackrel{f}{\to} EB^lA$ such that the composition
$X \stackrel{f}{\to} EB^lA \stackrel{p_l}{\to} B^{l+1}A$ is the identity in $B^{l+1}A$. Since the fiber of $p_l$ at the identity element is
$B^lA$, we have an isomorphism of topological groups:
$$Z^l(X, \underline{A}) \cong \Map(X, B^lA).$$
By the classification of principal bundles over paracompact spaces, we know that a map $X \to B^lA$ is null-homotopic
if and only if the map lifts to $EB^{l-1}A$. Therefore we obtain the isomorphism
$$H^l(X,\underline{A}) \cong [X,B^lA]$$
whenever $l \geq 1$. For $l=0$ we obtain the isomorphism
$$H^0(X,\underline{A}) \cong \Map(X,A)$$
and we may say that the Segal-Mitchison cohomology calculates the cohomology of $X$ with coefficients in the sheaf 
$\underline{A}$ of $A$-valued functions, i.e. is the derived functor of $\Map(X,A)$.

Whenever we have a short exact sequence in $\Topab$
$$A' \stackrel{i}{\To} A \stackrel{p}{\To} A'',$$
a standard diagram chase argument shows that there is an induced long exact sequence in cohomology
$$ \to H^{k-1}(X,\underline{A''}) \to H^k(X,\underline{A'}) \to H^k(X,\underline{A}) \to H^k(X,\underline{A''}) \to H^{k+1}(X,\underline{A'}) \to.$$

\subsection{Cohomology of simplicial spaces}
Whenever $X_\bullet$ is a simplicial paracompact space we may define the double complex
$$C^{p,q}(X_\bullet, \underline{A}): = \Map(X_p, EB^qA)$$
whose horizontal differential 
$$\delta : C^{p,q}(X_\bullet, \underline{A}) \to C^{p+1,q}(X_\bullet, \underline{A})$$
is the one induced by the structural maps $X_{p+1} \to X_p$ of the simplicial space, and whose vertical differential is $\partial$.
The total complex of the double complex will be denoted by 
$$C^{*}(X_\bullet, \underline{A}) := \Tot(C^{*,*}(X_\bullet, \underline{A}))$$
and its differential is $d:=\delta + (-1)^p\partial$.

\begin{definition} Let $X_\bullet$ be a simplicial paracompact space. The cohomology of the complex  
  $C^{*}(X_\bullet, \underline{A})$ with differential $d$ is denoted by 
$$H^*(X_\bullet, \underline{A}) := H^* \left(C^{*}(X_\bullet, \underline{A}), d \right),$$ and
is called the Segal-Mitchison cohomology of the simplicial space $X_\bullet$ with coefficients in the topological abelian group $A$.
\end{definition}
We may filter the total complex by the subcomplexes $\Tot(C^{* \geq p,*}(X_\bullet, \underline{A}))$ thus
obtaining a spectral sequence whose first page is isomorphic to
$$E_1^{p,q}= H^q(X_p, \underline{A}).$$
Note that whenever the simplicial space $X_\bullet$ is homotopy equivalent to a simplicial set, or when $A$ is contractible, we know that  $E_1^{p,q}=0$ for $q >0$. For $q=0$ we obtain $E_1^{p,0} = \Map(X_p,A)$, and therefore
in this case $H^*(X_\bullet, \underline{A})$ is simply the cohomology of the complex $(\Map(X_*,A), \delta)$. The cohomology 
of this complex will be denoted
$$H^*_{cont}(X_\bullet, A) := H^*(\Map(X_*,A), \delta)$$
and will be called the {\it {cohomology of continuous cochains}}.

\begin{lemma} \label{lemma A contractible}Suppose that $X_\bullet$ is a simplicial paracompact space and $A$ is a contractible topological group, or alternatively suppose that the simplicial space $X_\bullet$ is homotopy equivalent to 
a simplicial set. Then, in either case, the forgetful homomorphism 
induces an isomorphism of cohomologies:
$$H^*(X_\bullet, \underline{A}) \stackrel{\cong}{\to} H^*_{cont}(X_\bullet, A).$$
\end{lemma}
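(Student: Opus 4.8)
The plan is to analyze the spectral sequence $E_1^{p,q} = H^q(X_p, \underline{A})$ converging to $H^*(X_\bullet, \underline{A})$ that was constructed just before the statement, and to compare it with the analogous (degenerate) spectral sequence computing $H^*_c(X_\bullet, A)$. The forgetful homomorphism $C^{p,q}(X_\bullet, \underline A) = \Map(X_p, EB^qA) \to \Map(X_p, A)$ is the zero map for $q>0$ (there is no sensible forgetful map in positive vertical degree), so what is really meant is the chain map induced by inclusion of the subcomplex $q=0$: indeed $\Map(X_*, A) = C^{*,0}(X_\bullet,\underline A)$ sits inside the total complex, and since $\partial$ restricted to $C^{*,0}$ lands in $C^{*,1}$ whose relevant cocycles are detected by $Z^0(X_p,\underline A) \cong \Map(X_p, A)$, the inclusion is a chain map onto the bottom row. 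So the statement is that this bottom-row inclusion is a quasi-isomorphism on total complexes.

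First I would treat the case where $A$ is contractible. Then $EB^qA$ is contractible for all $q \geq 0$, so by the discussion preceding the lemma, $E_1^{p,q} = H^q(X_p, \underline A) = [X_p, B^q A] = 0$ for $q > 0$ (a map into a contractible space is null-homotopic), while $E_1^{p,0} = H^0(X_p, \underline A) = \Map(X_p, A)$. Hence the spectral sequence is concentrated on the row $q=0$, collapses at $E_2$, and yields $H^n(X_\bullet, \underline A) \cong H^n$ of the complex $(\Map(X_*, A), \delta) = H^n_c(X_\bullet, A)$. One then checks that this isomorphism is precisely the one induced by the bottom-row inclusion, which is immediate from how the edge homomorphism of a first-quadrant double-complex spectral sequence is realized on the chain level.

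Second, the case where $X_\bullet$ is homotopy equivalent (degreewise, compatibly with the simplicial structure) to a simplicial set. Here the point is that each $X_p$ is homotopy equivalent to a discrete space, so every principal $B^{q-1}A$-bundle over $X_p$ is trivial for $q \geq 1$, giving $[X_p, B^q A] = 0$, i.e. again $E_1^{p,q} = 0$ for $q>0$ and $E_1^{p,0} = \Map(X_p, A)$ (using that $\Map(X_p, A)$ only depends on the homotopy type of $X_p$ up to the relevant equivalence, or more carefully that the forgetful map $H^0(X_p,\underline A)\to \Map(X_p,A)$ is already an isomorphism by the identification $H^0(X,\underline A)\cong \Map(X,A)$ established earlier). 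The same collapse argument as above then finishes it.

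The main obstacle I expect is not the spectral sequence bookkeeping but the identification of the forgetful map with the edge homomorphism, i.e.\ checking that the abstract isomorphism produced by the collapsing spectral sequence really is induced by the explicit chain-level inclusion of $(\Map(X_*,A),\delta)$ as the bottom row of the total complex. This requires being careful about the direction of the horizontal differential $\delta$ (induced by the simplicial structure maps $X_{p+1}\to X_p$) and the sign convention $d = \delta + (-1)^p \partial$, and about the fact that for the bottom row the relevant cocycle condition in $C^{p,1}$ reduces, via $Z^0(X_p,\underline A)\cong \Map(X_p, B^0A) = \Map(X_p,A)$, to the vanishing of $\partial\circ f$, which is automatic. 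A secondary technical point is making precise, in the simplicial-set case, the hypothesis ``homotopy equivalent to a simplicial set'' strongly enough that it applies degreewise to guarantee $[X_p, B^qA]=0$; one should cite the classification of principal bundles over paracompact spaces (already invoked in the text) together with the observation that $X_p$ being homotopy equivalent to a discrete set forces all such bundles to be trivial.
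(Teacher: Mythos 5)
Your proof is correct and follows essentially the same route as the paper: the paper's (implicit) argument is exactly the collapse of the column-filtration spectral sequence $E_1^{p,q}=H^q(X_p,\underline{A})$, using $H^q(X_p,\underline{A})\cong[X_p,B^qA]=0$ for $q>0$ in either hypothesis and $E_1^{p,0}=\Map(X_p,A)$, which is the discussion immediately preceding the lemma. Your extra care about identifying the abstract isomorphism with the chain-level forgetful/edge map is a reasonable refinement but not a departure from the paper's approach.
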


\subsection{Cohomology of topological groups}  \label{subsection cohomology topological groups}
Let $G$ be a paracompact and compactly generated topological group. Denote by $G_\bullet$ the simplicial space
defined by the nerve of the topological category with one object and $G$ for the space of morphisms. 
In this case we have that $G_p=G^p$ and the differential 
$$ \delta_G : \Map(G^{p},EB^qA) \to \Map(G^{p+1},EB^qA)$$
is defined by the  formula:
\begin{align*}\delta_G&(f)(h_0,...,h_p)= \\
& f(h_1,...,h_p)+\sum_{i=1}^{p}(-1)^if(h_1,...,h_{i-1}h_{i},...,h_p) +(-1)^{p+1}f(h_0,...,h_{p-1}).
\end{align*}

\begin{definition}
Let $G$ be a paracompact and compactly generated topological group and let $A$  a compactly generated and locally contractible Hausdorff topological abelian group. The Segal-Mitchison cohomology of the group $G$
with coefficients in  $A$ is:
$$H^*(G_\bullet, \underline{A}) = H^*(\Tot(C^{*,*}(G_\bullet, \underline{A}), d_G=\delta_G+ (-1)^p\partial).$$
\end{definition}

Segal in \cite[\S 4]{Segal-Cohomology} shows
that the cohomology groups $H^*(G_\bullet, \underline{A})$ for $*=1,2$ have their usual interpretations. Let us recall the proofs since they will be used in what follows.
\begin{lemma} The first Segal-Mitchison cohomology group
 is isomorphic to the topological group of continuous homomorphisms, i.e.
$$H^1(G_\bullet, \underline{A}) \cong \Hom(G,A).$$
\end{lemma}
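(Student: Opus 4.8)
The plan is to unwind the definition of $H^1(G_\bullet,\underline{A})$ via the spectral sequence of the double complex and match it with continuous homomorphisms. First I would note that a class in $H^1(G_\bullet,\underline{A})$ is represented by a cocycle of total degree $1$ in $\Tot(C^{*,*}(G_\bullet,\underline{A}))$, namely a pair $(f_{1,0}, f_{0,1})$ with $f_{1,0}\in\Map(G,EA)$ and $f_{0,1}\in\Map(\mathrm{pt},EBA)=EBA$; but $G_0=\mathrm{pt}$ so $C^{0,1}=EBA$, and the cocycle condition $d(f_{1,0},f_{0,1})=0$ splits into $\partial f_{1,0}=\delta_G f_{0,1}$ and $\delta_G f_{1,0}=0$ (up to signs). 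Since $EBA$ is a point's worth of data and $\delta_G$ on $C^{0,q}$ lands via the two face maps $G\to\mathrm{pt}$, the term $\delta_G f_{0,1}$ vanishes, forcing $\partial f_{1,0}=0$, i.e.\ $f_{1,0}$ factors through the fibre $A\subset EA$ of $p_0$. Thus the cocycle is precisely a continuous map $f\colon G\to A$ with $\delta_G f=0$, which by the explicit formula for $\delta_G$ in degree $p=1$ reads $f(h_1h_2)-f(h_1)-f(h_2)=0$, i.e.\ $f$ is a continuous homomorphism.

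Next I would check that the coboundaries are trivial, so that no identification of homomorphisms takes place. A coboundary in total degree $1$ comes from an element of $C^{0,0}=\Map(\mathrm{pt},EA)=EA$, and $d$ applied to $a\in EA$ gives $(\delta_G a, -\partial a)$; but $\delta_G a=0$ since both face maps $G\to\mathrm{pt}$ agree, and $\partial a\in EBA$ need not vanish, so the only way $(\delta_G a,-\partial a)$ can have a nonzero component matching our cocycle shape is through the $C^{0,1}$-slot, which we have already seen is forced to be zero. Hence $Z^1=\{\text{continuous homomorphisms }G\to A\}$ and $B^1=0$, giving the claimed isomorphism of abstract groups. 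Finally I would observe that this is an isomorphism of topological groups: both sides carry the subspace topology from $\Map(G,A)$ with the compact-open topology (the cocycle space is a closed subgroup of $C^{1,0}=\Map(G,EA)$, and the condition $\partial f=0$ cuts out exactly $\Map(G,A)$ as noted in the discussion of $Z^l(X,\underline{A})\cong\Map(X,B^lA)$ preceding the lemma), and the homomorphism condition is a closed condition, so the bijection is a homeomorphism.

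The main obstacle I expect is bookkeeping the filtration and signs carefully enough to be sure that the $E_2^{0,1}$ and $E_2^{1,0}$ contributions to $H^1$ are as claimed — in particular, ruling out a spurious contribution from $H^1(\mathrm{pt},\underline{A})=[\mathrm{pt},BA]$, which is trivial, and from the differential $d_1$ on the first page. Since $G_0=\mathrm{pt}$ is contractible, Lemma~\ref{lemma A contractible}-type reasoning shows the $q>0$ row over $X_0$ collapses, so $E_1^{0,q}=H^q(\mathrm{pt},\underline A)=0$ for $q\geq 1$; this kills any $E_2^{0,1}$ term and leaves $H^1(G_\bullet,\underline A)=E_2^{1,0}=\ker\!\big(\delta_G\colon\Map(G,A)\to\Map(G^2,A)\big)$, which is exactly $\Hom(G,A)$. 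I would present the argument in the direct cocycle form above rather than invoking the spectral sequence formally, since for total degree $1$ the chase is short and more transparent, relegating the spectral-sequence remark to a parenthetical sanity check.
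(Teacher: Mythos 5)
Your overall strategy is the same as the paper's: unwind the total complex in total degree $1$ and read off the cocycle and coboundary conditions directly. The conclusion is correct, but there is a concrete error in the middle of your cocycle analysis. A $1$-cochain is a pair $(f_{1,0},f_{0,1})$ with $f_{1,0}\in\Map(G,EA)$ and $f_{0,1}\in EBA$, and the cocycle condition has \emph{three} components: $\delta_G f_{1,0}=0$ in bidegree $(2,0)$, $\partial f_{1,0}=\pm\delta_G f_{0,1}=0$ in bidegree $(1,1)$, and $\partial f_{0,1}=0$ in bidegree $(0,2)$. You handle the first two correctly, but the third does \emph{not} force $f_{0,1}=0$; it only forces $f_{0,1}\in\ker(\partial\colon EBA\to EB^2A)=BA$, which is a nontrivial group whenever $A$ is. So your claims ``$Z^1=\{\text{continuous homomorphisms}\}$'' and ``$B^1=0$'' are both false: the correct statements are $Z^1(G_\bullet,\underline{A})\cong\Hom(G,A)\oplus BA$ and, since $d$ applied to $a\in C^{0,0}=EA$ gives $(0,\pm\partial a)$ with $\partial\colon EA\to EBA$ having image exactly $BA$, also $B^1(G_\bullet,\underline{A})=\{0\}\oplus BA$. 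The two errors cancel in the quotient, so $H^1\cong\Hom(G,A)$ survives, but the justification you give for discarding the $C^{0,1}$-slot (``which we have already seen is forced to be zero'') conflates $\delta_G f_{0,1}=0$ with $f_{0,1}=0$ and is not valid. This is precisely the point the paper is careful about: the $BA$ summand of $Z^1$ is killed by coboundaries, not absent from the cocycles.

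Two smaller remarks. First, the spectral-sequence ``sanity check'' in your last paragraph is actually correct and self-contained ($E_1^{0,q}=H^q(\mathrm{pt},\underline{A})=\pi_0(B^qA)=0$ for $q\geq1$, and $E_2^{1,0}=\ker(\delta_G\colon\Map(G,A)\to\Map(G^2,A))=\Hom(G,A)$ with no incoming differential), so it would have been the safer argument to foreground; it is a mildly different route from the paper's explicit cocycle bookkeeping. Second, your attention to the topology of the isomorphism is a welcome addition (the lemma asserts an isomorphism of topological groups and the paper's proof only treats the algebra), but once $Z^1\cong\Hom(G,A)\times BA$ is corrected you should also note that the quotient by $\{0\}\times BA$ carries the expected topology.
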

\begin{proof}
An element in $Z^1(G_\bullet, \underline{A})$ consists of a pair of maps $f_1:G \to EA$ and $f_0:\{*\} \to EBA$ such
that $\delta_Gf_1(g_1,g_2)=f(g_2)-f(g_1g_2)+f(g_1)=0$,  $\partial f_1(g)=- \delta f_0(*)= 0$ and $\partial f_0(*)=0.$ The last two equations
imply that the image of $f_1$ lies in $A$, the kernel of $\partial : EA \to EBA$, and the image of $f_0$ in $BA$, the kernel of $\partial : EBA \to EB^2A$. The first equation implies that $f_1: G \to A \subset EA$ is a 
homomorphism of groups. Therefore we obtain the isomorphism of groups
$$Z^1(G_\bullet, \underline{A})\cong \Hom(G,A) \oplus BA.$$
 Now, the image of $C^0(G_\bullet, \underline{A})=\Map({*},EA)$ under $d=\delta + \partial$ is simply $\{0\}\oplus BA$ and therefore
 we obtain the desired isomorphism $H^1(G_\bullet, \underline{A}) \cong \Hom(G,A)$.
 \end{proof}
 \begin{lemma} \label{lemma classification of central extensions}
The second Segal-Mitchison cohomology group
 is isomorphic to the group of isomorphism classes of $A$-central extensions of $G$, i.e.
$$H^2(G_\bullet, \underline{A}) \cong \Ext(G,A).$$
\end{lemma}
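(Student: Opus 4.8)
We must establish a bijection between $H^2(G_\bullet, \underline A)$ and isomorphism classes of $A$-central extensions of $G$. Following the pattern of the preceding lemma, the plan is to first unwind the cocycle condition for $Z^2(G_\bullet, \underline A)$ in the total complex, then show that a normalized representative amounts precisely to a continuous $2$-cocycle $G \times G \to A$ (possibly after correcting by coboundaries), and finally recall the classical correspondence between such $2$-cocycles and central extensions — being careful about the topological subtleties (local sections, principal bundle structure) that distinguish the Segal--Mitchison setting from the discrete one.

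**Key steps.** First I would write down a general $2$-cochain in the total complex as a triple $(f_2, f_1, f_0)$ with $f_2 \colon G^2 \to EA$, $f_1 \colon G \to EBA$, $f_0 \colon \{*\} \to EB^2A$, and impose $d(f_2,f_1,f_0)=0$, i.e. $\delta_G f_2 = \pm \partial f_1$, $\delta_G f_1 = \pm \partial f_0$, $\partial f_0 = 0$. As in the previous lemma, the equation $\partial f_0 = 0$ forces $f_0$ to land in $B^2A$; using that $EB^2A$ is contractible and $BA \to EBA \to B^2A$ is a principal bundle, one adjusts $(f_2,f_1,f_0)$ within its $d$-coboundary class so that $f_0 = 0$ and then $f_1$ lands in $BA$, and one further trivializes $f_1$ (it represents a class in $H^1$ of a point, which vanishes) so that $f_1$ may be taken to be $0$; this uses the paracompactness/classification-of-bundles input quoted in \S\ref{section cohomology}. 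After these reductions the remaining datum is a map $f_2 \colon G^2 \to EA$ with $\partial f_2 = 0$, hence $f_2 \colon G^2 \to A$, satisfying $\delta_G f_2 = 0$ — exactly a continuous normalized $2$-cocycle on $G$ with values in $A$. Second, I would check that the residual coboundary freedom (cochains $(f_1', f_0')$ with the analogous reductions) corresponds precisely to shifting $f_2$ by $\delta_G$ of a continuous map $G \to A$, so that $H^2(G_\bullet,\underline A) \cong H^2_c(G,A)$ after these identifications. Third, I would invoke the standard construction: given a continuous $2$-cocycle $\sigma \colon G \times G \to A$, form $A \times_\sigma G$ with twisted multiplication $(a,g)(a',g') = (a + a' + \sigma(g,g'), gg')$; normalization makes this a group, the hypotheses on $A$ and $G$ (compactly generated, paracompact, local contractibility) ensure the projection $A\times_\sigma G \to G$ is a principal $A$-bundle, hence a short exact sequence in $\Topab$, and conversely any such central extension admits a continuous (local, then global by paracompactness) section giving back a cocycle. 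One then verifies cohomologous cocycles yield isomorphic extensions and conversely, completing the bijection; compatibility of the Baer sum with addition of cocycles upgrades it to a group isomorphism.

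**Main obstacle.** The genuinely nontrivial point — and the reason one cites Segal directly rather than reproving it — is the topological bookkeeping: that the reductions $f_0 \rightsquigarrow 0$, $f_1 \rightsquigarrow 0$ can be carried out \emph{continuously} and coherently (this is where the contractibility of the $EB^kA$ and the principal-bundle structure of $B^kA \to EB^kA \to B^{k+1}A$ are essential, exactly as in the $H^1$ proof), and dually that an abstract central extension in $\Topab$ — where $A \hookrightarrow \widetilde G$ is only required to be a principal $A$-bundle — admits a global continuous set-theoretic section so that it is classified by an honest continuous $2$-cocycle. Over discrete groups this is automatic; here it rests on paracompactness of $G$ and the bundle classification results recalled in \S\ref{section cohomology}, together with the fact that $E A$ being a topological abelian group lets one add and subtract cochains freely. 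Once that is in hand, the algebra (cocycle $\leftrightarrow$ extension, cohomologous $\leftrightarrow$ isomorphic, additivity) is the classical argument and I would only sketch it.
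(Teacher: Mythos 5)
There is a genuine gap in your reduction step, and it is exactly the point the Segal--Mitchison formalism is designed to handle. You propose to trivialize the component $f_1\colon G\to BA$ ``since it represents a class in $H^1$ of a point''. It does not: $f_1$ lives in bidegree $(1,1)$ of the double complex, i.e.\ it is a map from the \emph{space} $G$ to $BA$, and the obstruction to null-homotoping it is a class in $[G,BA]\cong H^1(G,\underline{A})$, the group classifying principal $A$-bundles over the space $G$. This is nonzero in general, and in the correspondence with central extensions $A\to P\to G$ the map $F_1\colon G\to BA$ is precisely the classifying map of the underlying principal $A$-bundle $P\to G$. Killing $f_1$ amounts to assuming that every $A$-central extension of $G$ is topologically a trivial bundle, which forces your conclusion $H^2(G_\bullet,\underline{A})\cong H^2_c(G,A)$ --- but by Lemma~\ref{lemma A contractible} that identification only holds when $A$ is contractible (or $G_\bullet$ is essentially a simplicial set). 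A concrete counterexample: for $G=U(1)$ and $A=\integer$, the extension $\integer\To\real\To U(1)$ has connected total space, so it cannot arise from any continuous cocycle $\sigma\colon U(1)^2\to\integer$ (which would give the disconnected space $\integer\times U(1)$); yet $H^2(U(1)_\bullet,\underline{\integer})\cong H^2(BU(1),\integer)\cong\integer$ detects it. The same error reappears in your ``main obstacle'' paragraph, where you assert that a central extension in $\Topab$ admits a \emph{global} continuous section by paracompactness: paracompactness yields local sections only, and a global one exists iff the bundle is trivial.

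The paper's proof avoids this by never discarding $F_1$. From a cocycle $(F_2,F_1,F_0)$ the relations force $F_1$ to land in $BA$, and the extension is built as the pullback $F_1^{*}EA$ of the universal bundle $EA\to BA$, with multiplication $(g_1,b_1)\cdot(g_2,b_2)=(g_1g_2,\,b_1+b_2+F_2(g_1,g_2))$; the equation $\partial F_2=\delta_G F_1$ is exactly what makes this well defined, and $\delta_G F_2=0$ gives associativity. Conversely, from an extension $P\to G$ one chooses a classifying map $F_1$ with bundle-map lift $F_1'\colon P\to EA$ and sets $\overline{F}_2(p_1,p_2)=F_1'(p_1p_2)-F_1'(p_2)-F_1'(p_1)$, which descends to $F_2$ on $G\times G$ because it is $A\times A$-invariant. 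Your third step (cocycle $\leftrightarrow$ extension for \emph{trivialized} bundles) is the special case $F_1=0$, and your overall architecture would be salvageable if you replaced ``trivialize $f_1$'' by ``retain $f_1$ as the classifying map and build the total space as a pullback of $EA\to BA$''.
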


\begin{proof}
We will consider $A$-central extensions of groups, $A \to P \stackrel{p}{\to} G$, i.e. short exact sequences of groups where $A$ is central in $P$. We recall that short exact sequence means, in particular, that $P$ is a principal $A$-bundle over $G$.  Two of these $A$-central extensions $P \stackrel{p}{\to} G$ and $P' \stackrel{p'}{\to} G$ are isomorphic if there
is a homomorphism of groups $\phi : P \to P'$ with $p' \circ \pi=p$ such that $\phi$ is a morphism of principal $A$-bundles.
Denote by $\Ext(G,A)$ the set of isomorphism classes of $A$-central extensions of $G$.

The group of 2-cocycles $Z^2(G_\bullet, \underline{A})$ consist of three maps $F_2: G\times G \to EA$, $F_1: G \to EBA$ and $F_0: \{*\} \to EB^2A$
satisfying the following equations:
\begin{align*}
F_2(g_2,g_3)+F_2(g_1,g_2g_3)&=  F_2(g_1g_2,g_3) + F_2(g_1,g_2) \\
\partial F_2(g_1,g_2) & = F_1(g_1g_2)- F_1(g_1) -F_1(g_2) \\
\partial F_1(g)& = F_0(*)-F_0(*)=0\\
\partial F_0(*)&= 0.
\end{align*}
Hence $F_1$ takes values in $BA \subset EBA$ and we may take the pullback bundle $F_1^*EA$ over $G$ as
$F_1^*EA : =\{(g,b) \in G \times EA | F_1(g)= p_0(b) \} $. We may define the group structure on $F_1^*EA$ by the equation
$$(g_1,b_1) \cdot (g_2, b_2) : = (g_1g_2, b_1+b_2+ F_2(g_1,g_2))$$
since we know by the second equation that 
$$\partial ( b_1+b_2+ F_2(g_1,g_2)) =  \partial(b_1) + \partial (b_2) + F_1(g_1g_2)-F_1(g_1) - F_1(g_2)= F_1(g_1g_2).$$ 
The associativity of the product structure in $F_1^*EA$ follows from the first equation thus making $F_1^*EA$ an $A$-central
extension of $G$. Therefore we have constructed an $A$-central extension $F_1^*EA$ over $G$ with the information of the maps
$F_1$ and $F_2$ (the information of $F_0$ is not important).

Now for an $A$-central extension $P \to G$, which is moreover a principal $A$-bundle, we may define its classifying map $F_1 : G \to BA$
together with its lift $F_1' : P \to EA$, a morphism of principal $A$-bundles. For any pair of elements $p_1, p_2 \in P$
we define $$\overline{F}_2(p_1,p_2) := F_1'(p_1p_2) - F_1'(p_2) - F_1'(p_1)$$ and we note that for any $a_1,a_2 \in A$
we have $\overline{F}_2(p_1,p_2) =\overline{F}_2(a_1p_1,a_2p_2)$. Therefore $\overline{F}_2$ is $A \times A$-invariant and therefore it induces a map
$F_2: (A \backslash P) \times (A \backslash P) \cong G \times G \to EA$. The associativity of $P$ implies that $\delta_G F_2=0$ and the definition of $\overline{F}_2$
implies that $\partial F_2(g_1,g_2) \cong F_1(g_1g_2)- F_1(g_1) -F_1(g_2)$. Therefore the maps $F_2$ and $F_1$ are the essential information
that defines a 2-cocycle in $Z^2(G_\bullet, \underline{A})$.
 
The fact that cohomologous 2-cocycles induce isomorphic $A$-central extensions and that isomorphic $A$-central extension induce cohomologous 2-cocycles
are straightforward.
\end{proof}

 By Lemma \ref{lemma A contractible}, whenever $A$ is contractible we obtain an isomorphism with the cohomology of continuous cochains of the group with coefficients in $A$
$$H^*(G_\bullet, \underline{A}) \cong H^*_{cont}(G,A).$$
Whenever $A$ is discrete, by \cite[Prop. 3.3]{Segal-Cohomology} there is an isomorphism with the cohomology of the complex of singular cochains with coefficients in $A$
$$H^*(G_\bullet, \underline{A}) \cong H^*(BG,A).$$

The short exact sequence $A \to EA \to BA$ induces a long exact sequence in cohomology groups, and  since $EA$ is a contractible, we obtain the long exact sequence in cohomology
$$\to H^n_c(G, EA) \to H^n(G_\bullet, \underline{BA}) \to H^{n+1}(G_\bullet, \underline{A}) \to H^{n+1}_c(G,EA) \to.$$
 When $n=2$ the homomorphism $H^2(G_\bullet, \underline{BA}) \to H^3(G_\bullet, \underline{A})$
could be understood as the assignment that takes the isomorphism class of a $BA$-central extension of $G$,
$$BA \To \Gamma \to G,$$
to the isomorphism class of the crossed module $EA \to \Gamma$ which induces the exact sequence
$$A \To EA \To \Gamma \to G.$$
The
2-group $\Gamma \times EA \twoarrows \Gamma$ that defines the crossed module could be understood as a 2-group central extension of the 2-group $ G \twoarrows G$ by the 2-group $A \twoarrows \{*\}$ and its extension class is an element in the
cohomology group
$H^3(G_\bullet, \underline{A})$.
We refer to the Appendix in section \S \ref{subsection classification crossed modules} where we elaborate on the previous statement.

\subsection{Segal-Mitchison cohomology as a derived functor}The cohomology groups $H^*(G_\bullet, \underline{A})$ could be understood as the derived functor of $G$-invariant sections
of the trivial $G$-equivariant topological abelian group $A$. Whenever $A$ is endowed with an action of $G$ we need
to elaborate a little further. 

Denote by $G$-$\Topab$ the category of compactly generated locally contractible topological
abelian groups on which $G$ acts continuously \cite[\S 2]{Segal-Cohomology}. A sequence $A' \stackrel{}{\to} A \stackrel{}{\to} A''$ in $G$-$\Topab$ 
is called a short exact sequence if it is one in $\Topab$ when the action of $G$ is neglected. Call
an object in $G$-$\Topab$ {\it{soft}} if it is of the form $\Map(G,A)$ where $A$ is a contractible group and $\Map$ denotes
space of continuous maps endowed with the compact open topology \cite[pp. 380]{Segal-Cohomology}. Note that a soft group is  contractible.
Segal shows that every object $A$ in $G$-$\Topab$ has a soft resolution
$$A \to E_{(G)}A \to E_{(G)}B_{(G)}A \to E_{(G)}B^2_{(G)}A \to \cdots$$
where $E_{(G)}A :=\Map(G,EA)$ and $B_{(G)}A$ fits in the short exact sequence
 \begin{align}
 A \To E_{(G)}A \To B_{(G)}A \label{short exact sequence A->E_GA -> B_GA}
\end{align}
where $A$ embeds in $E_{(G)}A$ as the constant maps taking values in $A$.

Alternatively one may take the double complex $\Map(G^{k+1},EB^lA)$, $k ,l \geq 0$, with differentials
$\partial$ and $\delta$, with $\delta$ defined by the following equation:
$$\delta(f)(h_1,...,h_n)= f(h_2,...,h_n)+\sum_{i=1}^{n-1}(-1)^if(h_1,...,h_ih_{i+1},...,h_n).$$
The total complex becomes a soft resolution of $A$ in $G$-$\Topab$.
Then, the derived functor of the $G$-invariant sections $\Gamma^G(A)$ of $A$ can be calculated by the cohomology
of the $G$-invariant sections of the previous total complex. The $G$-invariant double complex
is isomorphic to the double complex  $\Map(G^{k},EB^lA)$, $k ,l \geq 0$ whose differentials are $\partial$ and
\begin{align*}\delta_G&(f)(h_0,...,h_n)= \\
& f(h_1,...,h_n)+\sum_{i=1}^{n}(-1)^if(h_1,...,h_{i-1}h_{i},...,h_n) +(-1)^{n+1}f(h_0,...,h_{n-1})\cdot h_n.
\end{align*}
Denoting by $H^*(G_\bullet,\underline{A})$ the cohomology of 
$$(\Tot^*(\Map(G^{*},EB^*A)), d_G=\delta_G +(-1)^p\partial)$$
we see that $R^p\Gamma^G(A)= H^p(G_\bullet,\underline{A})$. 
Note that whenever the action of $G$ on $A$ is trivial,
the cohomology groups previously defined agree with the cohomology of topological groups defined in section \S \ref{subsection cohomology topological groups}.

 \subsection{Lyndon-Hochschild spectral sequence} \label{section LHSS}
 
 Let us consider a short exact sequence of locally contractible  and compactly generated groups
 $$S \hookrightarrow G \to K$$
 where 
$S$ is abelian and central.
Denote by $K \rtimes G$ the action groupoid determined by the right action of $G$ on $K$; the space
of objects is $K$, the space of morphisms is $K \times G$, $\source(k,g)=k$ and $\target(k,g)=kg$. Recall that the inclusion
$S[1] \to K \rtimes G$, $s \mapsto (1_K,s)$ is a weak equivalence of topological groupoids.

Consider first the double complex
$$C^{l,m}((K \rtimes G)_\bullet, \underline{A}):= \Map(K \times G^l, EB^mA)$$
with differentials $\delta_{K \rtimes G} : C^{l-1,m}((K \rtimes G)_\bullet, \underline{A}) \to C^{l,m}((K \rtimes G)_\bullet, \underline{A})$
\begin{align*}
&(\delta_{K \rtimes G} f) (k,g_1, \dots ,g_l) \\
&= f(kg_1, g_2, \dots, g_l) + \sum_{i=1}^{l-1} (-1)^i f(k, g_1, \dots, g_ig_{i+1}, \dots, g_l) + (-1)^lf(k,g_1, \dots, g_{l-1})
\end{align*}
and $\partial_2 : C^{l,m-1}((K \rtimes G)_\bullet, \underline{A}) \to C^{l,m}((K \rtimes G)_\bullet, \underline{A})$, $\partial_2 f = \partial \circ f$ (the reason for the sub-index 2 in the second differential will become clear in what follows). The cohomology of the total complex is $H^*((K \rtimes G)_\bullet, \underline{A})$.

Note that the groups $C^{l,m}((K \rtimes G)_\bullet, \underline{A})$ belong to $\Topab$ and they come endowed with
a continuous right $K$ action; for $f \in  C^{l,m}((K \rtimes G)_\bullet, \underline{A})$ and $h \in K$ let
$$(f \cdot h)(k,g_1, \cdots, g_l) := f(hk, g_1, \cdots, g_l).$$
The groups $C^{l,m}((K \rtimes G)_\bullet, \underline{A})$ then belong to $K$-$\Topab$.
Moreover,
they are {\it soft} since they are of the form $\Map(K, \Map(G^l,EB^mA))$ with $\Map(G^l,EB^mA)$ contractible. 

Let us now consider the quadruple complex 
$$C^{i,j,l,m} := \Map\left(K^i, EB^j\left(\Map(K \times G^l, EB^mA) \right) \right)$$  
with differentials
$\delta_K : C^{i-1,j,l,m} \to C^{i,j,l,m}$ defined by 
\begin{align*}
&(\delta_K F) (k_1, \cdots , k_i) \\
&= F(k_2, \cdots, k_i) + \sum_{r=1}^{i-1} (-1)^rF(k_1, \cdots, k_rk_{r+1}, \cdots, k_i) + (-1)^{i}F(k_1, \cdots, k_{i-1})\cdot k_i,
\end{align*}
$\partial_1 : C^{i,j-1,l,m} \to C^{i,j,l,       m}$, $\partial_1 F = \partial \circ F$ with
$$\partial : EB^{j-1}\left(\Map(K \times G^l, EB^mA)\right) \to EB^j\left(\Map(K \times G^l, EB^mA)\right),$$
and the ones induced by $\delta_{K \rtimes G}$ and $\partial_2$ which will be denoted with the same symbols
$\delta_{K \rtimes G} : C^{i,j,l-1,m} \to C^{i,j,l,m}$ and $\partial_2 : C^{i,j,l,m-1} \to C^{i,j,l,m}$.

At this point it is important to notice that the functors $E$ and $B$ send exact sequences in $K$-$\Topab$ to exact sequences
in $K$-$\Topab$ \cite[Prop. A.3]{Segal-Cohomology}. Therefore, if we have a complex of groups $(V^*, d)$ in $K$-$\Topab$, there is a
canonical isomorphism $H^*(EB^l V^*, EB^ld) \cong EB^l H^*(V^*,d)$ in $K$-$\Topab$. This canonical isomorphism will be
used in what follows.

Define the double complex
\begin{align} \label{definition D**}
D^{p,q} = \bigoplus_{{i+j=p}, {l+m=q}}  C^{i,j,l,m}
\end{align}
with differentials $d_K: D^{p-1,q} \to D^{p,q}$, $d_K:=\delta_K +(-1)^i \partial_1$, and $d_{K \rtimes G} : D^{p,q-1} \to D^{p,q}$,
$d_{K \rtimes G} :=\delta_{K \rtimes G} +(-1)^{l}\partial_2$, and consider the total complex
$\Tot(C^{*,*,*,*})=\Tot(D^{*,*})$ with total differential 
\begin{equation*}
d= d_K +(-1)^{p}d_{K \rtimes G} =\partial_K +(-1)^i \partial_1 +(-1)^{i+j}\delta_{K \rtimes G} +(-1)^{i+j+l}\partial_2. 
\end{equation*}
We claim
the following result.

\begin{proposition} Consider the short exact sequence of  locally contractible and compactly generated groups
 $$S \hookrightarrow G \to K$$
 where $K= S \backslash G$, $S$ is abelian and is central.
Then the inclusion of the $K$-invariant groups
$$C^*((K \rtimes G)_\bullet, \underline{A})^K \hookrightarrow \Tot(C^{*,*,*,*})$$
is a quasi-isomorphism. Hence the cohomology groups
$$H^* (G_\bullet, \underline{A}) \stackrel{\cong}{\To} H^*(\Tot(C^{*,*,*,*}),d)$$
are canonically isomorphic.
\end{proposition}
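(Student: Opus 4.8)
The plan is to verify the quasi-isomorphism by exhibiting the total complex $\Tot(C^{*,*,*,*})$ as a "resolution of a resolution" and repeatedly invoking the fact (quoted in the excerpt) that $E$ and $B$ send short exact sequences in $K$-$\Topab$ to short exact sequences, so that passing to cohomology of a complex in $K$-$\Topab$ commutes with applying $EB^{\bullet}$. Concretely, I would filter the total complex by the first pair of indices $(i,j)$ versus the last pair $(l,m)$, i.e. by the columns $D^{p,q} = \bigoplus_{i+j=p,\, l+m=q} C^{i,j,l,m}$ with the $d_K$-differential running "vertically" and $d_{K\rtimes G}$ running "horizontally." For fixed $(l,m)$ the complex $\bigl(C^{\bullet,\bullet,l,m}, d_K\bigr) = \bigl(\Tot^{\bullet}\Map(K^i, EB^j V_{l,m}), \delta_K + (-1)^i\partial_1\bigr)$ is, by the derived-functor description of Segal–Mitchison cohomology of $K$ with coefficients in $V_{l,m}:=\Map(K\times G^l, EB^mA)$, a complex computing $R^*\Gamma^K(V_{l,m})$. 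Since each $V_{l,m}$ is \emph{soft} (it is of the form $\Map(K,\Map(G^l,EB^mA))$ with contractible coefficient group), its higher derived $K$-invariants vanish, and the only surviving cohomology is $\Gamma^K(V_{l,m}) = V_{l,m}^K = C^{l,m}((K\rtimes G)_\bullet,\underline{A})^K$ in degree $p=0$. Thus the $E_1$-page of this spectral sequence collapses onto the single row $\bigl(C^{*}((K\rtimes G)_\bullet,\underline{A})^K, d_{K\rtimes G}\bigr)$, which is exactly the inclusion of the statement, proving the first quasi-isomorphism.

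For the second isomorphism $H^*(G_\bullet,\underline A)\cong H^*(\Tot(C^{*,*,*,*}),d)$, I would instead reinterpret $C^*((K\rtimes G)_\bullet,\underline A)$ itself. The inclusion $S[1]\to K\rtimes G$ is a weak equivalence of topological groupoids (stated in \S\ref{section LHSS}), and $G$ is Morita/weakly equivalent to $K\rtimes G$ as well — more precisely, $(K\rtimes G)_\bullet$ is the nerve of the action groupoid, and there is a standard simplicial map comparing it with $G_\bullet$ whose fibres are built from $K$, so that $H^*((K\rtimes G)_\bullet,\underline A)\cong H^*(G_\bullet,\underline A)$. Combining this with the first paragraph — and with the observation that on the $K$-invariant subcomplex the residual $K$-action has already been accounted for, so that $C^*((K\rtimes G)_\bullet,\underline A)^K$ with its $d_{K\rtimes G}$-differential indeed computes $H^*(G_\bullet,\underline A)$ — gives the canonical chain of isomorphisms. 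The key bookkeeping point is that the quadruple complex is designed precisely so that the two "outer" differentials ($\delta_K, \partial_1$) resolve the $K$-invariants of the two "inner" ones ($\delta_{K\rtimes G}, \partial_2$), which compute the cohomology of $G$.

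I expect the main obstacle to be the careful handling of softness and exactness \emph{inside} the iterated $\Map$ and $EB^\bullet$ constructions: one must check that $\Map(K\times G^l, EB^mA)$ is genuinely soft as an object of $K$-$\Topab$ with the translation action, that applying $EB^j$ to the complex $(V^*,d)$ in $K$-$\Topab$ commutes with taking cohomology (this is the quoted $H^*(EB^lV^*,EB^ld)\cong EB^lH^*(V^*,d)$), and that all the spectral sequences in sight converge — which requires the complexes to be suitably bounded or the filtrations to be exhaustive and complete. A secondary subtlety is verifying that the various differentials genuinely anticommute (up to the indicated signs $(-1)^i, (-1)^{i+j}, (-1)^{i+j+l}$) so that $d^2=0$ and so that the filtration arguments are legitimate; this is routine sign-chasing but must be done. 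Once softness and the commutation of $EB^\bullet$ with cohomology are in place, the collapse of the relevant spectral sequence page is immediate, and the proposition follows.
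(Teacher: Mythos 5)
Your first paragraph is correct and is essentially the paper's own argument: filter $\Tot(C^{*,*,*,*})$ by the $(l,m)$-degree so that the associated graded complex carries only $d_K$, observe that for each fixed $(l,m)$ the column $\bigl(\Map(K^i,EB^jV_{l,m}),d_K\bigr)$ computes $R^*\Gamma^K(V_{l,m})$, and use softness of $V_{l,m}=\Map(K\times G^l,EB^mA)=\Map\bigl(K,\Map(G^l,EB^mA)\bigr)$ to collapse this onto $V_{l,m}^K$ concentrated in degree $0$. That yields the quasi-isomorphism $C^*((K\rtimes G)_\bullet,\underline{A})^K\hookrightarrow\Tot(C^{*,*,*,*})$.

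Your second paragraph, however, contains a genuine error. You assert that $G$ is weakly (Morita) equivalent to the action groupoid $K\rtimes G$ and deduce $H^*((K\rtimes G)_\bullet,\underline{A})\cong H^*(G_\bullet,\underline{A})$. This is false: since $G$ acts transitively on $K=S\backslash G$ with stabilizer $S$, the groupoid $K\rtimes G$ is weakly equivalent to $S[1]$ --- exactly as stated at the beginning of \S\ref{section LHSS} --- so $H^*((K\rtimes G)_\bullet,\underline{A})\cong H^*(S_\bullet,\underline{A})$, not $H^*(G_\bullet,\underline{A})$. (Indeed this identification with the cohomology of $S$ is what feeds the $E_2$-page of the Lyndon--Hochschild--Serre spectral sequence in the following proposition; if your claim were true, that spectral sequence would be vacuous.) What is actually needed --- and what you only gesture at as an unproved ``observation'' --- is the cochain-level isomorphism $C^p((K\rtimes G)_\bullet,\underline{A})^K\cong C^p(G_\bullet,\underline{A})$: a cochain $f\in\Map(K\times G^l,EB^mA)$ that is invariant under $(f\cdot h)(k,g_1,\dots,g_l)=f(hk,g_1,\dots,g_l)$ is independent of the $K$-coordinate, and evaluation at $k=1_K$ identifies $\Map(K\times G^l,EB^mA)^K$ with $\Map(G^l,EB^mA)$ in a way that intertwines $\delta_{K\rtimes G}$ with $\delta_G$ and $\partial_2$ with $\partial$. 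Substituting this isomorphism of complexes into the conclusion of your first paragraph gives $H^*(G_\bullet,\underline{A})\cong H^*(\Tot(C^{*,*,*,*}),d)$ directly; no comparison of groupoid cohomologies is needed, and the one you invoke is not available.
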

\begin{proof}
Filter the total complex $\Tot(C^{*,*,*,*})$ by the degree $q$ in $D^{p,q}$, i.e.
$F_q = \Tot(D^{*, *\geq q})$. The associated graded group $\bigoplus_{q \geq 0} F_q/F_{q+1} $ is isomorphic
to the total complex but where we only take into account the differential $d_K$. Since the groups
$\Map(K \times G^l, EB^mA) $ are soft in $K$-$\Topab$, the first page of the spectral sequence becomes
the $K$-invariant subgroup 
$$E_1^{p,0} \cong C^p((K \rtimes G)_\bullet, \underline{A})^K$$
with $E_1^{*,q \geq 1}=0$. Now, since $C^p((K \rtimes G)_\bullet, \underline{A})^K \cong C^p(G_\bullet, \underline{A})$
we have that $E_2^{p,0} \cong H^p( G_\bullet, \underline{A}).$ The result follows.
\end{proof}

\begin{proposition}[Lyndon-Hochschild spectral sequence] Under the same conditions of above, filtering the total complex $F'_p = \Tot(D^{*\geq p, *})$ by the 
degree $p$ in $D^{p,q}$, we obtain a spectral sequence whose second page is
$$E_2^{p,q} \cong H^p(K_\bullet, \underline{H^q(S_\bullet, \underline{A})})$$
which converges to $H^*(G_\bullet, \underline{A})$.
\end{proposition}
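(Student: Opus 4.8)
The plan is to exploit the quadruple complex that the previous Proposition already analyzed, but to run the spectral sequence for the \emph{other} filtration of the double complex $D^{*,*}$ of \eqref{definition D**}, namely $F'_p=\Tot(D^{*\geq p,*})$, filtering by the total degree $p=i+j$ in the $K$-directions rather than by $q=l+m$ in the $(K\rtimes G)$-directions. Since each $D^{p,q}$ is a finite direct sum of the $C^{i,j,l,m}$ with $i,j,l,m\geq 0$, the double complex $D^{*,*}$ is of first-quadrant type, so $F'_p$ is exhaustive and bounded below in every total degree; hence the associated spectral sequence converges, and its abutment is $H^*(\Tot(C^{*,*,*,*}),d)$, which by the previous Proposition is canonically $H^*(G_\bullet,\underline{A})$. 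Thus convergence is automatic and the only real task is to identify $E_2$.

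The $E_0$-differential for this filtration is the part of $d$ that preserves $p=i+j$, namely $d_{K\rtimes G}=\delta_{K\rtimes G}+(-1)^l\partial_2$, which moves only the $(l,m)$-indices while $(i,j)$ stay frozen. For fixed $(i,j)$ the relevant column complex is
$$\bigoplus_{l+m=\bullet} C^{i,j,l,m}=\Map\!\big(K^i,\;EB^j\big(\Tot_{l+m}(\Map(K\times G^l,EB^mA))\big)\big),$$
the inner total complex carrying $d_{K\rtimes G}$, whose cohomology is by definition $H^q((K\rtimes G)_\bullet,\underline{A})$. Because $E$, $B$ and $\Map(K^i,-)$ send short exact sequences in $K$-$\Topab$ to short exact sequences (\cite[Prop. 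A.3]{Segal-Cohomology} together with paracompactness of $K^i$), taking cohomology commutes with all of them — this is the canonical isomorphism $H^*(EB^lV^*,EB^ld)\cong EB^lH^*(V^*,d)$ recorded just before \eqref{definition D**}. Consequently
$$E_1^{p,q}\;\cong\;\bigoplus_{i+j=p}\Map\!\big(K^i,\,EB^j\big(H^q((K\rtimes G)_\bullet,\underline{A})\big)\big),$$
and the residual $E_1$-differential, induced by $d_K=\delta_K+(-1)^i\partial_1$, is exactly the total differential of the bar complex computing $H^*(K_\bullet,-)$ with coefficients in $H^q((K\rtimes G)_\bullet,\underline{A})\in K$-$\Topab$ (the twist in $\delta_K$ being the residual right $K$-action on these groups). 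Hence $E_2^{p,q}\cong H^p\big(K_\bullet,\underline{H^q((K\rtimes G)_\bullet,\underline{A})}\big)$.

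It remains to identify $H^q((K\rtimes G)_\bullet,\underline{A})$ with $H^q(S_\bullet,\underline{A})$ as an object of $K$-$\Topab$. The functor $S[1]\to K\rtimes G$, $s\mapsto(1_K,s)$, is a weak equivalence of topological groupoids, so restriction along the induced map of nerves $S^p\to K\times G^p$ is a quasi-isomorphism of the two complexes computing Segal--Mitchison cohomology, giving $H^q((K\rtimes G)_\bullet,\underline{A})\cong H^q(S_\bullet,\underline{A})$. Moreover, since $S$ is central in $G$, conjugation by any element of $G$ is naturally isomorphic to the identity functor of $K\rtimes G$, so the residual $K$-action on $H^q((K\rtimes G)_\bullet,\underline{A})$ is trivial, matching the (likewise trivial) conjugation action on $H^q(S_\bullet,\underline{A})$. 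Substituting, $E_2^{p,q}\cong H^p\big(K_\bullet,\underline{H^q(S_\bullet,\underline{A})}\big)$, and convergence to $H^*(G_\bullet,\underline{A})$ was already established.

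I expect the main obstacle to be precisely this last identification: it is \emph{not} a levelwise statement, since $G\to K$ is merely a principal $S$-bundle and $S^p\to K\times G^p$ is far from a weak equivalence (already $H^0(K\times G^p,\underline A)=\Map(K\times G^p,A)\to\Map(S^p,A)$ is not an isomorphism, so the $E_1$-pages of the two cohomology spectral sequences genuinely differ). It rests on invariance of Segal--Mitchison groupoid cohomology under weak equivalences, which one can prove by passing to a good open cover of $K$ trivializing $G\to K$ and comparing the resulting \v{C}ech double complexes, while carefully tracking the residual $K$-equivariance throughout. Everything else — the exactness of $E$, $B$, $\Map(K^i,-)$ on $K$-$\Topab$ and the bookkeeping that matches the residual differential with the twisted bar differential of $K$ — is routine.
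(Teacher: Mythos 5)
Your proof follows essentially the same route as the paper's: filter by $p=i+j$, use the exactness of $E$ and $B$ on $K$-$\Topab$ to commute cohomology past $\Map(K^i,EB^j(-))$ and identify $E_1$ with the bar complex of $K$ with coefficients in $H^q((K\rtimes G)_\bullet,\underline{A})\cong H^q(S_\bullet,\underline{A})$, then invoke the previous proposition for convergence. You are in fact somewhat more careful than the paper, which simply asserts the identification $H^q((K\rtimes G)_\bullet,\underline{A})\cong H^q(S_\bullet,\underline{A})$ that you correctly flag as resting on invariance of Segal--Mitchison groupoid cohomology under weak equivalences.
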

\begin{proof}
The associated graded group $\bigoplus_{p \geq 0} F'_p/F'_{p+1} $ is isomorphic
to the total complex but where we only take into account the differential $d_{K \rtimes G} $. The first page of the 
spectral sequence is
$$E_1^{q,p}\cong \bigoplus_{i+j=p}\Map\left (K^i, EB^j \left( H^q((K \rtimes G)_\bullet, \underline{A}) \right) \right)$$
and since $H^q((K \rtimes G)_\bullet, \underline{A}) \cong H^q(S_\bullet, \underline{A})$ we have that the first
page is 
$$E_1^{q,p}\cong \bigoplus_{i+j=p}\Map\left (K^i, EB^j \left( H^q(S_\bullet, \underline{A}) \right) \right).$$
The differential of the first page coincides with the differential of the complex $C^*(K_\bullet, \underline{H^q(S_\bullet, \underline{A})})$
and therefore the second page is
$$E_2^{q,p} \cong H^p(K_\bullet, \underline{H^q(S_\bullet, \underline{A})}).$$
The spectral sequence converges to $H^*(G_\bullet, \underline{A})$ by the previous proposition.
\end{proof}

\section{Segal-Mitchison gerbes} \label{section SM gerbes}

In this section we will introduce a homological model for gerbes based on Segal-Mitchison cohomology.
Let $A$ be an abelian group in $\Topab$. In what follows we will define the strict 2-category $A$-$\SMGrbs$ of Segal-Mitchison gerbes
with structure group $A$.

\subsection{Objects} \label{SM gerbes objects}
The objects are pairs $(M, \alpha)$ where $M$ is a paracompact and locally compact space and $\alpha$ belongs to $Z^2(M, \underline{A})$.

Note that the 2-cocycle is a map $\alpha: M \to EB^2A$ such that $\partial \circ \alpha $ is constant to the identity in $EB^3A$.
Since $B^3A \to EB^3A$ is an inclusion we have that the composition $M \stackrel{\alpha}{\to} EB^2A \to B^3A$ is constant to the 
identity. Therefore the image of $\alpha$ lies on the fiber of the identity and this fiber is isomorphic to $B^2A$. Therefore,
$Z^2(M, \underline{A}) \cong \Map(M, B^2A)$ and we may consider $\alpha$ as a map $\alpha: M \to B^2A$. 

Any Segal-Mitchison $A$-gerbe $(M, \alpha)$ defines a Lifting bundle $A$-gerbe $[\alpha^{*}EBA/EA]$ as defined in \S\ref{Appendix B}  Appendix B, where $\alpha^{*}EBA$ is the pullback of the canonical bundle $EBA\to B^2A$ along $\alpha:M \to B^2A$.

\subsection{Morphisms} \label{SM gebres morphisms}

Let $(M, \alpha)$ and $(N, \beta)$ be two Segal-Mitchison gerbes with structure group $A$. A morphism from 
$(M, \alpha)$ to $(N, \beta)$ is a pair $(F, c)$ where $F:M \to N$ is a continuous map and
$c \in C^1(M, \underline{A})$ such that 
$$\alpha - F^*\beta = \partial c.$$

This equation implies that the principal $BA$-bundles $\alpha^*EBA$ and $F^*\beta^* EBA$ are isomorphic, and that one can construct a bundle morphism $\alpha^{*}EBA \to \beta^{*}EBA$ covering $F:M \to N$. This bundle morphism determines a strict 1-morphism $[\alpha^{*}EBA/EA]\to [\beta^{*}EBA/EA]$ 
of lifting bundle $A$-gerbes in the following way. Let 
$$\alpha^{*}EBA := \{ (m, \lambda) \in M \times EBA | \alpha(m) = \partial(\lambda) \}$$
$$\beta^{*}EBA := \{ (n, \sigma) \in N \times EBA | \beta(n) = \partial(\sigma) \}$$
for $p : EBA \to B^2A$ the projection map and define the following map
\begin{align*}
[F,c] : \alpha^{*}EBA  &\to \beta^{*}EBA\\
(m,\lambda) & \mapsto (F(m), \lambda - c(m)).
\end{align*}
The structural equation $\alpha - F^*\beta = \partial c$ implies that $\beta(F(m))=\alpha(m) - \partial c(m)$
and therefore $[F,c]$ is well defined. Moreover, the map $[F,c]$ is $EA$-equivariant and therefore it defines
a strict morphism of lifting bundle $A$-gerbes
$$[F,c] : [\alpha^{*}EBA/EA]\to [\beta^{*}EBA/EA].$$

If we have morphisms
$$(M, \alpha)\stackrel{(F,c)}{\To}(N, \beta) \stackrel{(H,d)}{\To}(O, \gamma)$$
the composition is 
$$(H,d) \circ (F,c) = (H \circ F, F^*d + c).$$ This composition is associative, and it follows that the functor to lifting bundle $A$-gerbes strictly preserves this composition.

\subsection{2-Morphisms}  \label{SM gebres 2-morphisms}

If we have two morphisms between the same objects,  $(F_1,c_1), (F_2, c_2) : (M, \alpha) \to (N, \beta)$,
there are 2-morphisms between $(F_1,c_1)$ and $(F_2, c_2)$ whenever $F=F_1=F_2$. If this is the case, a
2-morphisms $$(F, c_1) \stackrel{e}{\Longrightarrow} (F,c_2)$$ consists of an element $e \in C^0(M, \underline{A})$
such that $$\partial e = c_2-c_1.$$

In the realm of lifting bundle $A$-gerbes the 2-morphism provides a natural transformation between the 
two morphisms of lifting bundle $A$-gerbes from $[(\alpha^*EBA)/EA]$ and $[(F^*\beta^*EBA)/EA]$. The natural
transformation is given by the map 
\begin{align*}
 \alpha^*EBA &\to F^*\beta^*EBA \times EA,\\
  (m,\lambda) &  \mapsto ((F(m), \lambda -c_1(m)),-\partial e(m))
\end{align*}
where $((F(m), \lambda -c_1(m)),-\partial e(m))$ is the morphism in $[(F^*\beta^*EBA)/EA]$ from 
$(F(m), \lambda -c_1(m))$ to $(F(m), \lambda -c_1(m)-\partial e(m))=(F(m), \lambda -c_2(m))$.

Note that the category $\Hom_{A\text{-}\SMGrbs}((M, \alpha),(N, \beta))$, whose objects are the morphisms of Segal-Mitchison gerbes
and whose 
morphisms are the 2-morphisms of Segal-Mitchison gerbes, defines a lifting bundle gerbe with structure group $H^0(M, \underline{A})$. In the notation
of \S \ref{Objects lifting bundle gerbes} it can be written as follows:
$$C^0(M, \underline{A}) \stackrel{\partial}{ \circlearrowright } \Hom_{A\text{-}\SMGrbs}^0((M, \alpha),(N, \beta)).$$

\subsection{Monoidal structure}
The 2-category $A$-$\SMGrbs$ can be endowed with a symmetrical monoidal structure. Take two
Segal-Mitchison gerbes $(M,\alpha)$ and $(N,\beta)$ and define their product
$$(M,\alpha) \times (N, \beta) := (M\times N, \pi_1^* \alpha + \pi_2^*\beta)$$
where where $\pi_1: M\times N \to M$ and $\pi_2: M\times N \to N$ are the projections.

This monoidal structure is compatible with the monoidal structure of the associated lifting bundle $A$-gerbes of
\S \ref{monoidal structure A-gerbes} of Appendix B. From the maps $\alpha : M \to B^2A$ and $\beta: N \to B^2A$
we  construct the map $M \times N \to B^2A$, $(m,n) \mapsto \alpha(m) + \beta(n)$ using the group
structure of $B^2A$. This map encodes the Segal-Mitchison $A$-gerbe of the 
product of lifting bundle $A$-gerbes $[(\alpha^*EBA)/EA]\otimes[(\beta^*EBA)/EA]$.

In \S \ref{monoidal structure A-gerbes} (Appendix B) we give a summary of the properties of the 
2-category of Lifting bundle $A$-gerbes and in Theorem \ref{theorem equivalence SM-gerbes LB-gerbes} we show its equivalence with the 2-category of Segal-Mithchison $A$-gerbes.



\section{Multiplicative Segal-Mitchison gerbes} \label{section multiplicative}
Multiplicative Segal-Mitchison gerbes will be nothing else but weak monoid objects in the monoidal 2-category $A$-$\SMGrbs$ of Segal-Mitchison gerbes.
They also form a 2-category which will be defined in the following.

\subsection{Objects}
A multiplicative Segal-Mitchison gerbe consists of an object $(M,\alpha_1)$, a  morphism
$$(M,\alpha_1) \times (M, \alpha_1) \stackrel{(m,\alpha_2)}{\To} (M, \alpha_1),$$
 a 2-morphism which controls the defect of the associativity, and a cocycle condition on the 2-morphism for the pentagon property.

Since at the level of spaces the maps are indeed associative, the space itself together with the product
$M \times M \stackrel{m}{\to} M$ is an associative monoid in spaces. We may consider the double complex
$C^{p,q}(M_\bullet, \underline{A})$ that the monoid $M$ induces since the maps that define the differential $\delta_M$ only
require the product of the monoid.
In terms of that double complex,  a multiplicative Segal-Mitchison gerbe over $M$ consists of:
\begin{itemize} \item A Segal-Mitchison gerbe $(M, \alpha_1)$ with $\partial \alpha_1=0$.
\item A morphism lifting the product in $M$
$$(M,\alpha_1) \times (M, \alpha_1) \stackrel{(m,\alpha_2)}{\To} (M, \alpha_1)$$
whose structural equation is
$$\pi_1^*\alpha_1 - m^*\alpha_1 + \pi_2^*\alpha_1 = \partial \alpha_2$$
with $\alpha_2 \in C^1(M^2 , \underline{A})$; this equation can be read as $$-\delta_M \alpha_1 +\partial \alpha_2=0$$
in the double complex  $C^{p,q}(M_\bullet, \underline{A})$ with $\alpha_1 \in C^{1,2}(M_\bullet, \underline{A})$ and 
$\alpha_2 \in C^{2,1}(M_\bullet, \underline{A})$.
\item  A 2-morphism $\alpha_3 \in C^0(M^3, \underline{A})$ controlling the defect of associativity
$$(m,\alpha_2) \circ (1_M \times m, \pi_{23}^*\alpha_2) \stackrel{\alpha_3}{\Longrightarrow} (m,\alpha_2) \circ (m \times 1_M , \pi_{12}^*\alpha_2)$$
where $\pi_{23}: M^3 \to M^2$ denotes the projection on the last two coordinates, $\pi_{12}:M^3 \to M^2$ the projection on the first two and $1_M$ denotes the identity map on $M$.
The structural equation becomes
$$\pi_{12}^*\alpha_2 - (1_M \times m)^*\alpha_2 + (m \times 1_M)^* \alpha_2 - \pi_{23}^* \alpha_2 = \partial \alpha_3$$
which can be read as
$$\delta_M \alpha_2 + \partial \alpha_3=0$$
in the double complex $C^{p,q}(M_\bullet, \underline{A})$ with $\alpha_3 \in C^{3,0}(M_\bullet, \underline{A})$.
\item And the pentagon condition for the 2-morphism $\alpha_3$ which can be simply written as
$$\delta_M \alpha_3=0$$
in the double complex $C^{p,q}(M_\bullet, \underline{A})$.
\end{itemize}

The element $\alpha : = \alpha_1 \oplus - \alpha_2 \oplus \alpha_3 $ becomes a 3-cocycle in $Z^3(M_\bullet, \underline{A})$
since 
$$(\delta_M +(-1)^p\partial) \alpha= (-\partial \alpha_1)  \oplus (\delta_M \alpha_1 -\partial \alpha_2)\oplus (-\delta_M \alpha_2 -\partial \alpha_3)  \oplus (\delta_M\alpha_3)=0,$$
and any 3 cocycle in $Z^3(M_\bullet, \underline{A})$ whose component in $C^{0,3}(M_\bullet, \underline{A})= \Map(pt, EB^3A)$
is zero defines a 2-monoid structure on Segal-Mitchison $A$-gerbes over the monoid $M$.

Note that the 0-th column $C^{0,*}(M_\bullet, \underline{A}) = \Map(pt, EB^qA)$ in the double complex is acyclic,
and moreover the differential $\delta_M$ is zero on all elements of this column. Therefore we remove the 0-th column from
 the complex, just leaving the coefficients $A$ at the $(0,0)$ coordinate, thus obtaining a better suited complex for understanding
 Segal-Mitchison multipliciative gerbes. Then define a new double complex $\overline{C}^{p,q}(M_\bullet, \underline{A})$ as
 the subcomplex of ${C}^{p,q}(M_\bullet, \underline{A})$ as follows:
 \begin{align*}
 \overline{C}^{p,q}(M_\bullet, \underline{A}) = \left\{
 \begin{array}{ccc}
 {C}^{p,q}(M_\bullet, \underline{A}) & \mbox{if} & q >0\\
 A & \mbox{if} & p=q=0 \\
 0 & \mbox{if} & q=0, p \neq 0.
  \end{array}
 \right.
 \end{align*}
Denote $\overline{C}^{*}(M_\bullet, \underline{A}) := \Tot ( \overline{C}^{*,*}(M_\bullet, \underline{A}))$
and by $\overline{Z}^{k}(M_\bullet, \underline{A}):= Z^k(\overline{C}^{*}(M_\bullet, \underline{A})$ its cocycles. Note that the inclusion
$$\overline{C}^{*}(M_\bullet, \underline{A}) \stackrel{\simeq}{\To} C^{*}(M_\bullet, \underline{A})$$
is a quasi-isomorphism and therefore $H^*(\overline{C}^{*}(M_\bullet, \underline{A}),d) \cong H^*(M_\bullet, \underline{A})$.
We now may claim the following result.
\begin{proposition}
Multiplicative Segal-Mitchison gerbes structures over the monoid  $M$ are in one-to-one correspondence with
elements in $\overline{Z}^3(M_\bullet, \underline{A})$.
\end{proposition}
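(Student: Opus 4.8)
The plan is to read the correspondence off directly from the itemized unpacking of a multiplicative structure given just above. By that description, a multiplicative Segal-Mitchison gerbe over the fixed associative monoid $M$ is precisely a triple $(\alpha_1,\alpha_2,\alpha_3)$ with $\alpha_1\in C^{1,2}(M_\bullet,\underline A)$, $\alpha_2\in C^{2,1}(M_\bullet,\underline A)$, $\alpha_3\in C^{3,0}(M_\bullet,\underline A)$ subject to the four structural equations $\partial\alpha_1=0$ (the gerbe condition), $-\delta_M\alpha_1+\partial\alpha_2=0$ (the morphism $(m,\alpha_2)$ lifts the product), $\delta_M\alpha_2+\partial\alpha_3=0$ (the $2$-morphism $\alpha_3$ controls the associativity defect), and $\delta_M\alpha_3=0$ (the pentagon condition). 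So the first step is to package such a triple as the single total cochain $\alpha:=\alpha_1\oplus(-\alpha_2)\oplus\alpha_3$; since the bidegree-$(p,q)$ entries of the truncated double complex $\overline C^{*,*}(M_\bullet,\underline A)$ with $p+q=3$ are nonzero only for $(p,q)\in\{(1,2),(2,1),(3,0)\}$, this $\alpha$ is a well-defined element of the degree-$3$ part of $\overline C^{*}(M_\bullet,\underline A)=\Tot(\overline C^{*,*}(M_\bullet,\underline A))$, and conversely every element of that group arises this way from a unique triple.

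The second step is the sign bookkeeping: apply the total differential $d=\delta_M+(-1)^p\partial$ to $\alpha$ and split the result by bidegree. As recorded in the text, this yields exactly the four terms $-\partial\alpha_1$ in bidegree $(1,3)$, $\delta_M\alpha_1-\partial\alpha_2$ in bidegree $(2,2)$, $-\delta_M\alpha_2-\partial\alpha_3$ in bidegree $(3,1)$, and $\delta_M\alpha_3$ in bidegree $(4,0)$. Hence $d\alpha=0$ if and only if all four structural equations hold, i.e.\ if and only if $(\alpha_1,\alpha_2,\alpha_3)$ is a multiplicative structure. This simultaneously shows that a multiplicative structure produces an element of $\overline Z^{3}(M_\bullet,\underline A)$ and, read in reverse, that an arbitrary $\alpha\in\overline Z^{3}(M_\bullet,\underline A)$ — decomposed into its components $\alpha_1$ in bidegree $(1,2)$, $-\alpha_2$ in bidegree $(2,1)$, $\alpha_3$ in bidegree $(3,0)$ — yields a triple satisfying the four equations. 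As these two assignments are manifestly mutually inverse, the claimed bijection follows.

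The genuinely substantive input has already been supplied by the preceding discussion: that the composition law $(H,d)\circ(F,c)=(H\circ F,F^{*}d+c)$ for morphisms of $A$-$\SMGrbs$, together with the description of its $2$-morphisms, turns the statement "the $2$-morphism $\alpha_3$ controls the associativity defect" into the equation $\delta_M\alpha_2+\partial\alpha_3=0$ and turns the pentagon axiom of a weak monoid object into $\delta_M\alpha_3=0$, all taking place inside the double complex $C^{p,q}(M_\bullet,\underline A)$ built from the (strictly associative) monoid structure of $M$. Granting that translation, the proof is essentially a change of notation. The main obstacle is therefore not conceptual but one of care with signs: one must verify that the sign in $\alpha=\alpha_1\oplus(-\alpha_2)\oplus\alpha_3$ is the one for which the $(-1)^p\partial$ part of $d$ reproduces the four structural equations verbatim, and that passing between $\overline C^{*}$ and $C^{*}$ introduces nothing extra — the latter being immediate since $\overline C^{*}\hookrightarrow C^{*}$ is an inclusion of cochain complexes, and indeed a quasi-isomorphism.
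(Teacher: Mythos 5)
Your proposal is correct and follows essentially the same route as the paper: both pack the triple $(\alpha_1,\alpha_2,\alpha_3)$ into the total cochain $\alpha=\alpha_1\oplus(-\alpha_2)\oplus\alpha_3$, observe that the bidegree components of $d\alpha$ are exactly the four structural equations, and use the truncation killing $C^{0,3}$ to make the correspondence a bijection. Your version merely spells out the sign bookkeeping that the paper leaves implicit.
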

\begin{proof}
We have seen by the construction of the 2-monoid that $\alpha : = \alpha_1 \oplus - \alpha_2 \oplus \alpha_3$
defines a 3-cocycle in $\overline{Z}^3(M_\bullet, \underline{A})$ since we removed the group corresponding to
 $C^{0,3}(M_\bullet, \underline{A})$.  Any 3-cocycle is also of this form and therefore it defines a multiplicative Segal-Mitchison
 $A$-gerbe over the monoid $M$.
\end{proof}

{\bf{Notation.}} We will denote multiplicative Segal-Mitchison $A$-gerbes by pairs $\langle M,\alpha \rangle$ where $M$ is a monoid 
and $\alpha  \in \overline{Z}^3(M_\bullet, \underline{A})$. All  information is encoded in this pair.

\subsection{Morphisms}

A morphism of multiplicative Segal-Mitchison gerbes 
$$\langle M,\alpha \rangle \stackrel{\langle F,\beta\rangle}{\To}\langle M',\alpha'\rangle $$
consists of a pair $\langle F, \beta\rangle$ with $F: M\to M'$ a morphism of monoids (a continuous map preserving the monoidal structure)
together with $\beta \in \overline{C}^{2}(M_\bullet, \underline{A})$ such that
$$\alpha - F^* \alpha' = d_M \beta =(\delta_M+(-1)^p \partial) \beta.$$ 

\subsection{2-Morphisms} Take two morphisms of multiplicative Segal-Mitchison gerbes
$$\langle F_1,\beta_1\rangle, \langle F_2,\beta_2 \rangle : \langle M,\alpha\rangle  \stackrel{\langle F,\beta\rangle }{\To}\langle M',\alpha' \rangle.$$
There exist 2-morphisms only when $F_1=F_2=F$, and in this case the 2-morphisms
$$\langle F,\beta_1\rangle \stackrel{\gamma}{\Longrightarrow} \langle F, \beta_2\rangle$$
consist of an element $\gamma \in \overline{C}^{1}(M_\bullet, \underline{A})$ such that
$$\beta_2-\beta_1 = d_M \gamma.$$

If we restrict the 2-category to a fixed monoid $M$ and we only consider morphisms which are the identity map on the monoid $M$,
we obtain the complex
\begin{align*}
\xymatrix{
\overline{C}^{1}(M_\bullet, \underline{A})   \ar[rr]^{d_M} && \overline{C}^{2}(M_\bullet, \underline{A}) \ar[rr]^{d_M} && \overline{Z}^{3}(M_\bullet, \underline{A}),
}
\end{align*}
whose cohomology groups are
$$H^1(M_\bullet,\underline{A}), \  H^2(M_\bullet,\underline{A}) \ \mbox{and} \ H^3(M_\bullet,\underline{A}).$$
Therefore we can classify multiplicative Segal-Mitchison $A$-gerbes over a fixed monoid $M$ by the following result.

\begin{proposition}
Isomorphism classes of multiplicative Segal-Mitchison gerbes with structure group $A$ over the monoid  $M$
are in 1-1 correspondence with the elements of the group $H^3(M_\bullet,\underline{A})$.
\end{proposition}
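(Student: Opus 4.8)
The plan is to package the data of a multiplicative Segal-Mitchison gerbe over $M$ as a cocycle and then track how isomorphisms and $2$-morphisms act on cocycles, reducing the classification to the cohomology of the complex displayed just before the statement. Concretely, by the preceding proposition a multiplicative gerbe over a fixed monoid $M$ is the same thing as an element $\alpha \in \overline{Z}^3(M_\bullet, \underline{A})$, so I would first record that isomorphism classes of multiplicative gerbes are a quotient of $\overline{Z}^3(M_\bullet, \underline{A})$ by the equivalence relation generated by isomorphisms $\langle F, \beta\rangle$ with $F = 1_M$. (One also has to note that an isomorphism covering a nontrivial monoid automorphism $F$ identifies $\alpha$ with $F^*\alpha'$, but since we are classifying over a \emph{fixed} monoid $M$ this reduces — either by convention, or by a short remark — to the case $F = 1_M$; I would state this explicitly so the quotient is exactly by $F = 1_M$ isomorphisms.)

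Next I would unwind the definition of a morphism $\langle 1_M, \beta\rangle : \langle M, \alpha\rangle \to \langle M, \alpha'\rangle$: it exists precisely when $\alpha - \alpha' = d_M\beta$ for some $\beta \in \overline{C}^2(M_\bullet, \underline{A})$, i.e. when $\alpha$ and $\alpha'$ differ by a coboundary in $\overline{C}^*(M_\bullet,\underline{A})$. This already shows that the assignment $\langle M, \alpha\rangle \mapsto [\alpha] \in H^3(\overline{C}^*(M_\bullet,\underline{A}), d) \cong H^3(M_\bullet, \underline{A})$ is well-defined on isomorphism classes and is surjective, since every class in $H^3$ is represented by some cocycle in $\overline{Z}^3$, which by the proposition is a multiplicative gerbe. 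For injectivity I would argue that if $[\alpha] = [\alpha']$, i.e. $\alpha - \alpha' = d_M\beta$, then $\langle 1_M, \beta\rangle$ is an honest isomorphism $\langle M,\alpha\rangle \to \langle M, \alpha'\rangle$ in the $2$-category, so the two multiplicative gerbes are isomorphic. This is the heart of the argument and it is essentially formal once the bookkeeping of which cochain groups $\alpha_1,\alpha_2,\alpha_3$ live in (namely $\overline{C}^{1,2}, \overline{C}^{2,1}, \overline{C}^{3,0}$) is matched up with the total differential $d_M = \delta_M + (-1)^p\partial$.

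The step I expect to require the most care — though not the most depth — is checking that this correspondence is genuinely a bijection on \emph{isomorphism classes} rather than on something finer or coarser: one must verify that two morphisms $\langle 1_M, \beta\rangle$ and $\langle 1_M, \beta'\rangle$ with $\beta - \beta' = d_M\gamma$ are related by a $2$-morphism (so the set of isomorphisms modulo $2$-morphisms between a fixed pair is a torsor over $H^2(M_\bullet, \underline{A})$, but this does not affect the \emph{existence} of an isomorphism), and conversely that the existence of \emph{any} isomorphism $\langle 1_M,\beta\rangle$ forces $[\alpha] = [\alpha']$ in $H^3$. Both directions follow immediately from the structural equations $\alpha - F^*\alpha' = d_M\beta$ and $\beta_2 - \beta_1 = d_M\gamma$ defining morphisms and $2$-morphisms, together with the quasi-isomorphism $\overline{C}^*(M_\bullet,\underline{A}) \xrightarrow{\simeq} C^*(M_\bullet,\underline{A})$ recorded above, which supplies the identification $H^3(\overline{C}^*(M_\bullet,\underline{A}),d) \cong H^3(M_\bullet,\underline{A})$. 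So the proof is a short diagram-free argument: well-definedness and surjectivity from the proposition on cocycles, injectivity from reading the morphism equation as ``cohomologous in degree $3$''.
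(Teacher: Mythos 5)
Your proposal is correct and follows essentially the same route as the paper: the paper likewise restricts to morphisms covering $1_M$, observes that the structural equation $\alpha-\alpha'=d_M\beta$ is exactly the coboundary relation in the truncated complex $\overline{C}^{*}(M_\bullet,\underline{A})$, and identifies the resulting quotient with $H^3(M_\bullet,\underline{A})$ via the quasi-isomorphism $\overline{C}^{*}(M_\bullet,\underline{A})\hookrightarrow C^{*}(M_\bullet,\underline{A})$. Your extra remarks on the role of $2$-morphisms and on isomorphisms covering nontrivial $F$ are careful bookkeeping that the paper handles by convention, but they do not change the argument.
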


\section{Representations  of multiplicative Segal-Mitchison gerbes} \label{section representations}

In what follows we will represent multiplicative Segal-Mitchison gerbes on Segal-Mitchison gerbes. We will define the
 2-category $\Rep_{\langle M ,\alpha \rangle} $ of representations of a multiplicative Segal-Mitchison gerbe $\langle M ,\alpha \rangle$  by using the cohomological
nature that the Segal-Mitchison gerbes possess and we will provide explanations for the specific choices.

\subsection{Objects} A representation of the multiplicative Segal-Mitchison
gerbe $\langle M, \alpha \rangle$ consists of a pair $(N, \beta)$ where $N$ is a space endowed with an action of the monoid $M$
(in this case it is a right action but it may well be a left action). Denote the action by the map $\mu : N \times M \to N$. The second part is a 2-cochain $\beta \in C^2((N \rtimes M)_\bullet , \underline{A})$
such that 
$$d_{N \rtimes M}  \beta = \pi^* \alpha$$
where $\pi$ denotes the projection map $(N \rtimes M)_\bullet  \to M_\bullet$ which forgets the $N$-coordinate.
Let us call $(N,\beta)$ a $\langle M,\alpha \rangle$-representation.

Let us untangle the definition in order to see its structural pieces. Let $\beta = \beta_0 \oplus -\beta_1 \oplus \beta_2$
with $\beta_i \in C^{i,2-i}((N\rtimes M)_\bullet, \underline{A}) = \Map(N \times M^i, EB^{2-i}A)$ and recall
that $\pi^*\alpha : = \pi^*\alpha_1 \oplus - \pi^*\alpha_2 \oplus \pi^*\alpha_3$ with
 $\pi^*\alpha_j \in C^{j,3-j}((N\rtimes M)_\bullet, \underline{A})$.

\begin{itemize}
\item First thing to notice is that $\partial \beta_0=0$ and therefore $(N, \beta_0)$ is a Segal-Mitchison gerbe.
\item The second equation that one obtains is
$$\delta_{N \rtimes M} \beta_0 +\partial \beta_1 = \pi^*\alpha_1.$$
But since $\delta_{N \rtimes M} \beta_0 = \mu^* \beta_0 - \pi_1^* \beta_0$ we may rearrange the equation
as
$$\pi_1^*\beta_0 + \pi^*\alpha_1 - \mu^*\beta_0 = \partial \beta_1$$
which says that
$$(N,\beta_0) \times (M,\alpha_1 ) \stackrel{(\mu, \beta_1)}{\To} (N, \beta_0)$$
is a morphism of Segal-Mitchison gerbes.
\item The third equation is
$$-\delta_{N \rtimes M} \beta_1 +\partial \beta_2 = -\pi^*\alpha_2.$$
Unwinding it we obtain the equation for the defect of associativity
$$(\mu \times 1_M)^* \beta_1 + \pi_{12}^*\beta_1 - (1_N \times m)^* \beta_1 -\pi^* \alpha_2 = \partial \beta_2$$
associated to the diagram
$$
\xymatrix{
(N\times \beta_0) \times (M, \alpha_1) \times (M,\alpha_1) \ar[rrr]^-{(\mu \times 1_M,(\mu \times 1_M)^*\beta_1)} 
\ar[d]_{(1_N \times m, \pi^*\alpha_2)} &&&
 (N\times \beta_0) \times (M, \alpha_1) \ar[d]^{(\mu, \beta_1)}  \\
(N\times \beta_0) \times (M, \alpha_1) \ar[rrr]_-{(\mu,\beta_1)} \ar@{=>}[rrru]|*+{\beta_2} && &
(N\times \beta_0) .
}
$$
\item The last equation 
$$\delta_{N \rtimes M} \beta_2  = \pi^*\alpha_3$$
is simply the pentagon identity for the associator.
\end{itemize}

It should be now clear how to define the morphisms and 2-morphisms using the cohomological structure. We will 
not untangle the information anymore.

\subsection{Morphisms}
A morphism of multiplicative Segal-Mitchison gerbe representations
$$(N, \beta) \stackrel{(F, \gamma)}{\To} (N',\beta')$$
consists of a $M$-equivariant map $F: N \to N'$ and a cochain $\gamma \in C^1((N\rtimes M)_\bullet, \underline{A})$
such that
$$\beta - F^*\beta' = d_{N \rtimes M} \gamma.$$
The space of morphisms $\Hom_{\langle M, \alpha \rangle}((N, \beta),(N',\beta'))$ is endowed with the subspace
topology of $\Map_M(N,N') \times C^1((N\rtimes M)_\bullet, \underline{A})$ where the space of $M$-equivariant
 maps $\Map_M(N,N')$ is endowed with the compact open topology.
The composition of morphisms is continuous and is given by the formula
$$(F', \gamma') \circ (F,\gamma) = (F' \circ F, \gamma + F^*\gamma').$$

\subsection{2-Morphisms}
For two morphisms
$$(F_1,\gamma_1) , (F_2, \gamma_2) : (N, \beta) \To (N',\beta')$$
there are 2-morphisms from one to the other only if $F_1=F_2=F$. If this is the case
a 2-morphism $$(F, \gamma_1) \stackrel{\nu}{\Longrightarrow} (F, \gamma_2)$$
is a cochain $\nu \in C^0((N\rtimes M)_\bullet, \underline{A})$ such that
$$\gamma_2 - \gamma_1 = d_{N \rtimes M} \nu.$$

Fix a space $N$ where $M$ acts and a multiplicative Segal-Mitchison gerbe $\langle M,\alpha \rangle$. We see from the definition of a multiplicative Segal-Mitchison gerbe representation that a module over the space $N$ exists if and only if
$[\pi^*\alpha]=0$ in $H^3((N\rtimes M)_\bullet, \underline{A})$. If this is the case, the $\langle M,\alpha \rangle$-representation
structures over $N$ are (non-canonically) isomorphic with $Z^2((N\rtimes M)_\bullet, \underline{A})$, hence we have
the following result.

\begin{proposition}
Consider $\langle M,\alpha \rangle$ a multiplicative Segal-Mitchison gerbe with structure group $A$ and $N$ a space with a right action of $M$.
Then the space $N$ can be endowed with the structure of a representation
 of  $\langle M,\alpha \rangle$
if and only if $[\pi^*\alpha]=0$ in $H^3((N\rtimes M)_\bullet, \underline{A})$.
Whenever $[\pi^*\alpha]=0$, the isomorphism classes of $\langle M,\alpha \rangle$-representations with underlying space $N$ are in
(non-canonical) one-to-one correspondence with the group $H^2((N\rtimes M)_\bullet, \underline{A})$.
\end{proposition}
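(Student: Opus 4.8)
The plan is to reduce the statement to bookkeeping within the total complex $C^*((N\rtimes M)_\bullet,\underline{A})$, exactly paralleling the earlier classification results for multiplicative gerbes over a fixed monoid and for gerbes over a fixed space. First I would recall from the definition of representation that a $\langle M,\alpha\rangle$-representation structure on a fixed $N$ (with its fixed $M$-action $\mu$) is precisely a choice of $\beta\in C^2((N\rtimes M)_\bullet,\underline{A})$ with $d_{N\rtimes M}\beta=\pi^*\alpha$. Since $\pi^*\alpha$ is a $3$-cocycle (being the pullback of the $3$-cocycle $\alpha\in\overline{Z}^3(M_\bullet,\underline{A})$ along the simplicial map $\pi$, which commutes with $d$), such a $\beta$ exists if and only if $[\pi^*\alpha]=0$ in $H^3((N\rtimes M)_\bullet,\underline{A})$. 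This is the first assertion.

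For the second assertion, I would fix one representation structure $\beta_0$ (assuming the cohomological obstruction vanishes) and observe that any other representation structure $\beta$ satisfies $d_{N\rtimes M}(\beta-\beta_0)=0$, i.e.\ $\beta-\beta_0\in Z^2((N\rtimes M)_\bullet,\underline{A})$; conversely adding any $2$-cocycle to $\beta_0$ yields another representation structure. So the set of representation structures on $N$ is a torsor over $Z^2((N\rtimes M)_\bullet,\underline{A})$ (the non-canonical identification depending on the choice of $\beta_0$). Next I would unwind the notion of isomorphism of representations using the definitions in \S\ref{section representations}: two representation structures $\beta,\beta'$ on the same $N$ are isomorphic via a morphism $(F,\gamma)$ with $F$ an $M$-equivariant self-map; restricting to $F=\id_N$ (which is forced if we want an honest iso class parametrization consistent with the fixed underlying space, and suffices for surjectivity of the torsor map onto iso classes), the condition $\beta-\beta'=d_{N\rtimes M}\gamma$ says $\beta$ and $\beta'$ are isomorphic precisely when they differ by a coboundary. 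Therefore the iso classes of representation structures on $N$ correspond to $Z^2/\,\mathrm{im}(d_{N\rtimes M})=H^2((N\rtimes M)_\bullet,\underline{A})$, again after the non-canonical choice of basepoint $\beta_0$.

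The main obstacle — really the only non-formal point — is making sure the equivalence relation ``isomorphic as $\langle M,\alpha\rangle$-representations'' genuinely matches ``differ by a coboundary in $C^2((N\rtimes M)_\bullet,\underline{A})$.'' One direction (coboundary $\Rightarrow$ isomorphic) is immediate from the definition of morphism. For the other direction one must argue that a general isomorphism $(F,\gamma)$ with $F\neq\id_N$ does not enlarge the equivalence classes beyond what $\gamma$ already provides; this is where one uses that $F$ is an $M$-equivariant homeomorphism of $N$ to itself, so $F$ acts on the set of representation structures, and the quotient by this action together with the coboundary action still reproduces $H^2$ provided one is careful about the statement being about iso classes \emph{with underlying space $N$} under the identification $\beta\mapsto\beta_0+(\beta-\beta_0)$. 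I would state this carefully and note that, as in the analogous Proposition classifying multiplicative gerbes over a fixed monoid, the correspondence is only asserted to be a one-to-one correspondence of sets (non-canonical), not a group isomorphism, which sidesteps any subtlety about how the $F$-action interacts with the additive structure. The remaining verifications — that $\pi$ is a simplicial map so $\pi^*$ is a chain map, that $d_{N\rtimes M}^2=0$, and that the various structural equations in the bulleted unwinding are equivalent to $d_{N\rtimes M}\beta=\pi^*\alpha$ — are routine and were essentially already carried out in the preceding subsections, so I would merely cite them.
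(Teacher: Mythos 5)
Your proposal is correct and follows essentially the same route as the paper, whose entire ``proof'' is the paragraph preceding the proposition: a representation structure is exactly a $\beta$ with $d_{N\rtimes M}\beta=\pi^*\alpha$, such $\beta$ exist iff $[\pi^*\alpha]=0$, the set of them is a torsor over $Z^2((N\rtimes M)_\bullet,\underline{A})$, and quotienting by coboundaries gives $H^2$. The one subtlety you flag---isomorphisms $(F,\gamma)$ with $F\neq\id_N$, which could a priori identify further classes---is silently ignored by the paper, and the statement is literally true only when ``isomorphism'' is read as ``isomorphism covering $\id_N$''; your reading lands in the same place, so no gap relative to what the paper actually proves.
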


\begin{example}[Canonical representation] \label{Canonical representation}
Let us describe the canonical representation of a  multiplicative Segal-Mitchison gerbe $\langle M,\alpha \rangle$ with structure group $A$. 
Recall that $\alpha= \alpha_1 \oplus - \alpha_2 \oplus \alpha_3$ with 
$$\alpha_j \in C^{j,3-j}(M_\bullet, \underline{A}) = \Map(M^j, EB^{3-j}A).$$
Let $\overline{\alpha}_j: =\alpha_{j+1}$ and consider them as an elements in
$$\overline{\alpha}_j \in C^{j,2-j}((M\rtimes M)_\bullet, \underline{A}) = \Map(M \times M^j, EB^{2-j}A)$$
where the right action of $M$ over $M$ is given by right multiplication.
Then $\overline{\alpha}= \overline{\alpha}_1 \oplus  -\overline{\alpha}_2 \oplus \overline{\alpha}_3 \in C^2((M\rtimes M)_\bullet, \underline{A})$ and it satisfies the equation
$$d_{M \rtimes M} \overline{\alpha}= (\delta_{M \rtimes M} + (-1)^p \partial) \overline{\alpha} = \pi^* \alpha$$
where $\pi: (M \rtimes M)_\bullet \to M_\bullet$ forgets the first coordinate. 
Notice that
$$\pi^* \alpha - \delta_{M \rtimes M} \overline{\alpha} = \delta_{M} \alpha, \ \ \ \mbox{and} \ \ \ (-1)^p \partial \overline{\alpha}= (-1)^{p+1} \partial \alpha$$
and therefore we get the desired equation
\begin{align}
d_{M\rtimes M} \overline{\alpha}=(\delta_{M \rtimes M} + (-1)^p \partial )\overline{\alpha} = \pi^* \alpha - (\delta_M \alpha + (-1)^p \partial)\alpha = \pi^* \alpha. \label{eqn overline alpha}
\end{align}
Therefore, $(M, \overline{\alpha})$ is a representation of $\langle M , \alpha \rangle$ and we call it the canonical representation.
\end{example}

\subsection{Category of morphisms between representations} \label{category of morphisms of representations}
Note that the category 
$$\Hom_{\langle M , \alpha \rangle^{op}}((N, \beta), (N',\beta'))$$
 of morphisms 
of multiplicative Segal-Mitchison gerbe representations is a lifting bundle gerbe with structure group $H^0(N \rtimes M, \underline{A}) = \Map(N, A)^M$, the group of $M$-invariant maps from $N$ to $A$. We have
added the superscript $op$ to recall that the action of the multiplicative Segal-Mitchison gerbe acts from the right. Using the notation of \S \ref{Objects lifting bundle gerbes} the category
of morphisms of multiplicative Segal-Mitchison representations has the following form:
$$C^0(N \rtimes M, \underline{A}) \stackrel{d_{N \rtimes M}}{\circlearrowright} \Hom_{\langle M , \alpha \rangle^{op}}^0((N, \beta), (N',\beta')).$$
The underlying category is an action groupoid where the abelian group $C^0(N \rtimes M, \underline{A})$ acts on 
the morphisms $\Hom_{\langle M , \alpha \rangle^{op}}^0((N, \beta), (N',\beta'))$ by adding the image of the differential $d_{N \rtimes M}$.

\subsection{Endomorphisms of representations}

We will be interested in the category of endomorphisms of a right $\langle M,\alpha \rangle$-representation $(N,\beta)$. Denote this category by
$$\End_{\langle M,\alpha \rangle^{op}}(N,\beta):= \Hom_{\langle M,\alpha \rangle^{op}} \left((N,\beta), (N, \beta)\right)$$
and note that its space of objects fits in the sequence of monoids
$$Z^1((N \rtimes M)_\bullet, \underline{A}) \To \End^0_{\langle M,\alpha \rangle^{op}}(N,\beta) \To \End_{M^{op}}(N,[\beta]) $$
where the monoid  $\End_{M^{op}}(N,[\beta])$ denotes the space of right $M$-equivariant maps $F: N \to N$ such that the cohomology class
$[\beta- F^*\beta]$ is trivial. The space of morphisms of this category, $\End^1_{\langle M,\alpha \rangle^{op}}(N,\beta)$, is parameterized by 
$$ \End^0_{\langle M,\alpha\rangle^{op} }(N,\beta)) \times  C^0((N\rtimes M)_\bullet, \underline{A})  $$
whose product structure is
$$((F_1,\gamma_1), \nu_1) \cdot ((F_2,\gamma_2), \nu_2) = ((F_1 \circ F_2, F_2^*\gamma_1 +\gamma_2),F_2^*\nu_1 +\nu_2).$$

Note furthermore that the space of isomorphism classes of objects fits in the sequence of monoids
$$H^1((N \rtimes M)_\bullet, \underline{A}) \To \pi_0 \left( \End_{\langle M,\alpha \rangle^{op}}(N,\beta) \right)\To \End_{M^{op}}(N,[\beta])$$
and the group of endomorphisms of the identity map is the space of right $M$-invariant maps over $N$:
$$ H^0((N \rtimes M)_\bullet, \underline{A}) = \Map(N, A)^M.$$

The underlying four term sequence of the monoid in lifting bundle gerbes is
$$\Map(N, A)^M \to C^0((N\rtimes M)_\bullet, \underline{A}) \to 
\End^0_{\langle M,\alpha \rangle^{op}}(N,\beta) \To \pi_0 \left( \End_{\langle M,\alpha \rangle^{op}}(N,\beta) \right),$$
and whenever $ \End_M(N,[\beta])$ is a group, it becomes the four term sequence
of the crossed module
$$C^0((N\rtimes M)_\bullet, \underline{A}) \to 
\End^0_{\langle M,\alpha \rangle^{op}}(N,\beta)$$
associated to the 2-group. Note that $\End_{\langle M,\alpha \rangle^{op}}(N,\beta)$ is also a group object in 
lifting bundle gerbes as it is explained in  Appendix B \S \ref{Group lifting bundle gerbes}. In this case the structure group of the gerbe
is the group $\Map(N, A)^M$ of $M$-invariant maps.

\begin{example}[Endomorphisms of the canonical representation] \label{canonical representation}
Consider a multiplicative Segal-Mitchison gerbe $\langle G , \alpha \rangle$ where the monoid is a topological group $G$, and
$(G, \overline{\alpha})$ its canonical representation defined on Example \ref{group object lifting bundle gerbe}. Here
the group $G$ acts by multiplication on the right on $G$.

The morphisms of the category of endomorphisms fits in the short exact sequence of groups
\begin{align} \label{Extension of groups of canonical representation}
Z^1((G \rtimes G)_\bullet, \underline{A}) \To \End^0_{\langle G,\alpha \rangle^{op}}(G,\overline{\alpha}) \to G
\end{align}
since $\End_{G^{op}}(G,G) \stackrel{\cong}{\to} G, \ F \mapsto F(1)$ and $H^*((G \rtimes G)_\bullet, \underline{A})$ is trivial for $*>0$. The associated crossed module, or group object in Lifting bundle gerbes (see Lemma \ref{group object lifting bundle gerbe}), produces a four term exact sequence
$$\xymatrix{  A \ar[r] & C^0((G\rtimes G)_\bullet, \underline{A}) \ar[rr]^{d_{G \rtimes G}} &&
\End^0_{\langle G,\alpha \rangle^{op}}(G,\overline{\alpha}) \ar[r] & G}$$
where $A$ sits inside $ C^0((G\rtimes G)_\bullet, \underline{A}) = \Map(G,EA)$ as the $G$ invariant maps to $A$. 
Note that the crossed module, or group object in Lifting bundle $A$-gerbes
$$\xymatrix{ C^0((G\rtimes G)_\bullet, \underline{A}) \ar[rr]^{d_{G \rtimes G}} &&
\End^0_{\langle G,\alpha \rangle^{op}}(G,\overline{\alpha})}$$
encodes the information of the multiplicative
Segal-Mitchison gerbe $\langle G , \alpha \rangle$. 

Note that the complex $C^*((G \rtimes G)_\bullet, \underline{A})$ is a complex of $G$-$\Topab$ soft sheaves 
and moreover it is acyclic. 
 Therefore the short exact sequence
 $$
 \xymatrix{
 A \ar[r] & C^0((G\rtimes G)_\bullet, \underline{A}) \ar[r]^{d_{G \rtimes G}} &Z^1((G \rtimes G)_\bullet, \underline{A}) 
 }$$
 induces an isomorphism
 $$ H^2(G_\bullet, \underline{Z^1((G \rtimes G)_\bullet, \underline{A})}) \stackrel{\cong}{\To} 
 H^3(G_\bullet, \underline{A}). $$
 The inverse map of this isomorphism
 is given by the class in $H^2(G_\bullet, \underline{Z^1((G \rtimes G)_\bullet, \underline{A})})$ that
 encodes
 the group extension of \eqref{Extension of groups of canonical representation}.

Note that this 2-group of endomorphisms $\End_{\langle G,\alpha \rangle^{op}}(G,\overline{\alpha})$ is also group
object in Lifting bundle $A$-gerbes as it is explained in  \S \ref{Appendix B} of Appendix B,  Lemma \ref{group object lifting bundle gerbe}.
In this case the structure group of the gerbe is $A$ since the kernel of the differential $d_{G \rtimes G}$ consists of the $G$-invariant
$A$-valued maps over the transitive space $G$.

The 2-group of endomorphisms $\End_{\langle G,\alpha \rangle^{op}}(G,\overline{\alpha})$ encodes the same topological information
as the multiplicative Segal-Mitchison gerbe $\langle G, \alpha \rangle $. The first is a strict group object in Lifting bundle $A$-gerbes, the second
is a weak monoid object in Segal-Mitchison gerbes. The former could be understood as a strictification of the latter.

\end{example}

\section{Pontrjagin duality on multiplicative Segal-Mitchison $U(1)$-gerbes} \label{section pontrjagin}

In this section we will present a procedure by which we construct, from a specific choice of multiplicative
Segal-Mitchison $U(1)$-gerbe with a representation, another multiplicative Segal-Mitchison $U(1)$-gerbe. This second
multiplicative gerbe will be called fibrewise Pontrjagin dual because it generalizes Pontrjagin's duality
on locally compact topological abelian groups \cite{Morris,Pontrjagin}.

Recall that for a topological abelian group $S$ its Pontrjagin dual group is $\widehat{S}:=\Hom(S,U(1))$. 
Whenever $S$ is locally compact,  $\widehat{S}$ is also locally compact and the double Pontrjagin dual is canonically isomorphic to the original group $S \cong \widehat{\widehat{S}}$.
In what follows
we will use the notation and results of section \S\ref{section LHSS}.

Let us consider a central extension of  locally contractible and compactly generated groups
$$S \To G \To K$$
with $S$ locally compact. Let us take a cohomology class in $H^3(G_\bullet, \underline{U(1)})$ that vanishes once restricted 
to $H^3(S_\bullet, \underline{U(1)}) \cong E_2^{0,3} \supseteq E_\infty^{0,3}$ and once restricted to $E_\infty^{1,2}$ in the Lyndon-Hochschild spectral sequence of section \S \ref{section LHSS}. 
If we denote by $\Omega(G,S)$ those classes, the group is defined as the following subgroup of $H^3(G_\bullet, \underline{U(1)})$ :
$$\Omega(G,S) := \ker \left( \ker \left( H^3(G_\bullet, \underline{U(1)}) \to E_\infty^{0,3} \right) \to E_\infty^{1,2} \right).$$

This group of cohomology classes fits into the short exact sequence
$$0 \to E_\infty^{3,0} \To \Omega(G,S) \To  E_\infty^{2,1} \To 0$$
where the left hand side is the pullback of the classes from $K$
$$E_\infty^{3,0} =\Im (H^3(K_\bullet, \underline{U(1)}) \to H^3(G_\bullet, \underline{U(1)}) ),$$
and the right hand side is the cohomology of the middle term in the sequence
$$H^2(S_\bullet, \underline{U(1)}) \stackrel{d_2}{\To} H^2(K_\bullet, \underline{\widehat{S}}) 
\stackrel{d_2}{\To} H^4(K_\bullet, \underline{U(1)})$$
with $d_2$ the second differential of the spectral sequence. Here we have used the fact that
$\widehat{S} \cong H^1(S_\bullet, \underline{U(1)})$.

\vspace{0.3cm}

Take a cocycle $\alpha \in Z^3(G_\bullet, \underline{U(1)})$ whose cohomology class lies in $\Omega(G,S)$. Consider the restriction homomorphism
$$H^3(G_\bullet, \underline{U(1)}) \stackrel{\pi^*}{\to} H^3((K \rtimes G)_\bullet, \underline{U(1)}) \cong H^3(S_\bullet, \underline{U(1)})$$
and note that $[\alpha] \in \Omega(G,S)$ implies that $[\alpha|_S]=0$. Therefore  $[\pi^* \alpha]=0$ in $H^3((K \rtimes G)_\bullet , \underline{U(1)})$ and thus there is a representation $(K,\beta)$ with $\beta \in C^2(K \rtimes G , \underline{U(1)})$
such that $d_{K \rtimes G}  \beta = \pi^* \alpha$. We should think of 
$\beta$ as an element in $D^{0,2}$ in the double complex defined in the notation of \eqref{definition D**}
satisfying the equation 
\begin{align}
 d_{K \times G} \beta= \pi^* \alpha. \label{eqn beta}
\end{align}

Recall that the double complex is
\begin{align*} 
D^{p,q} = \bigoplus_{{i+j=p}, {l+m=q}}  C^{i,j,l,m}
\end{align*}
with differentials $d_K: D^{p-1,q} \to D^{p,q}$, $d_K:=\delta_K +(-1)^i \partial_1$, and $d_{K \rtimes G}: D^{p,q-1} \to D^{p,q}$,
$d_{K \rtimes G}:=\delta_{K \rtimes G} +(-1)^{l}\partial_2$, and the total complex
$\Tot(C^{*,*,*,*})=\Tot(D^{*,*})$ has for total differential $d= d_K +(-1)^{p}d_{K \rtimes G}.$

Since $[\alpha]$ belongs to $\Omega(G,S)$, the cohomology class $[\alpha]$ maps to zero in the cohomology group
$E_\infty^{1,2}$ and therefore there must exist $\gamma \in D^{1,1}$
such that $$d_K \beta - d_{K \rtimes G} \gamma=0.$$
Therefore $ d_K\gamma \in D^{2,1}$ satisfies the equations 
$$d_K (d_K \gamma) = 0 \ \ \mathrm{and} \ \  d_{K \rtimes G}(d_K\gamma)=0.$$
Hence we have that $$d_K \gamma \in Z^2(K_\bullet, \underline{Z^1((K \rtimes G)_\bullet, \underline{U(1)})}),$$
and the cocycles $\pi^*\alpha$ and $d_K \gamma$ are cohomologous in $\Tot^3(C^{*,*,*,*})$.

Recall that the inclusion $(pt \rtimes S) \to (K \rtimes G)$ of action groupoids induces an equivalence
of groupoids. Therefore the restriction map
$$Z^1((K \rtimes G)_\bullet, \underline{U(1)}) \to Z^1((pt \rtimes S)_\bullet, \underline{U(1)}) \cong \Hom(S, U(1))= \widehat{S}$$
induces a homomorphism
$$Z^2(K_\bullet, \underline{Z^1((K \rtimes G)_\bullet, \underline{U(1)})}) \to Z^2(K_\bullet, \underline{\widehat{S}}), \ \ d_1 \gamma \mapsto \widehat{F}.$$
This cocycle $\widehat{F}$ defines a central extension of the form $\widehat{S} \to \widehat{G} \to K$.

We will see in what follows that by choosing $[\alpha] \in \Omega(G,S)$ we make sure that there is a representation $(K,\beta)$ of the multiplicative Segal-Mitchison gerbe $\langle G, \alpha \rangle$, such that all the right $G$-equivariant homomorphisms $f_h \in\Map_{G^{op}}(K,K)$ with $f_h(k)=hk$ and $h \in K$
may be lifted to morphisms of representations from $(K,\beta)$ to $(K, f_h^*\beta)$.

Having set up the structure we claim the following result.

\begin{theorem} \label{Theorem crossed module}
Let $\langle G ,\alpha \rangle$ be a multiplicative Segal-Mitchison $U(1)$-gerbe such that $[\alpha] \in \Omega(G,S)$,
with $S \to G \to K$ a central extension by the locally compact abelian group $S$, and let $(K,\beta)$ be a right representation
of $\langle G ,\alpha \rangle$ as above. Then, 
the endomorphism category $\End_{\langle G ,\alpha \rangle^{op}}(K,\beta)$ is a 
crossed module, or group lifting bundle $A$-gerbe, whose four term sequence fits into the middle column of the diagram 
$$\xymatrix{
U(1)\ar[r] \ar[d] &U(1) \ar[d]& \\
C^0((K \rtimes G  )_\bullet, \underline{U(1)}) \ar[r] \ar[d]& C^0((K \rtimes G  )_\bullet, \underline{U(1)}) \ar[d] & \\
Z^1((K \rtimes G  )_\bullet, \underline{U(1)}) \ar[d]\ar[r] & \End^0_{\langle G ,\alpha \rangle^{op}}(K,\beta) \ar[r] \ar[d]& K\ar[d] \\
\widehat{S} \ar[r] & \widehat{G} \ar[r] & K. 
}$$  
\end{theorem}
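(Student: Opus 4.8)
The plan is to build the commutative diagram column by column, and the key observation is that the bottom square---the one involving $\widehat S \to \widehat G \to K$---is precisely the central extension classified by the cocycle $\widehat F$ constructed just before the statement, while the top part is the four-term exact sequence of the crossed module $\End_{\langle G,\alpha\rangle^{op}}(K,\beta)$ described in general in \S\ref{category of morphisms of representations} and \S\ref{Endomorphisms of representations}. So the real content is (a) to verify that $\End_{M^{op}}(K,[\beta])$ is actually \emph{all} of $K$, i.e.\ that every translation $f_h(k)=hk$ lifts to a morphism of representations $(K,\beta)\to(K,f_h^*\beta)$, and (b) to identify the resulting extension $Z^1((K\rtimes G)_\bullet,\underline{U(1)})\to\End^0_{\langle G,\alpha\rangle^{op}}(K,\beta)\to K$ with the pushforward, along the restriction $Z^1((K\rtimes G)_\bullet,\underline{U(1)})\to\widehat S$, of the sequence $\widehat S\to\widehat G\to K$.

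First I would recall from \S\ref{Endomorphisms of representations} that for any representation $(N,\beta)$ the endomorphism category sits in the sequence of monoids
$$Z^1((N\rtimes M)_\bullet,\underline{A})\to\End^0_{\langle M,\alpha\rangle^{op}}(N,\beta)\to\End_{M^{op}}(N,[\beta]),$$
where $\End_{M^{op}}(N,[\beta])$ is the set of $M$-equivariant $F\colon N\to N$ with $[\beta-F^*\beta]=0$ in $H^2((N\rtimes M)_\bullet,\underline A)$. For $N=K$ with the right translation action of $G$, every $G$-equivariant self-map is of the form $f_h(k)=hk$ for a unique $h\in K$ (evaluate at $1_K$), so $\End_{G^{op}}(K,K)\cong K$. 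The point is to show the obstruction $[\beta-f_h^*\beta]$ vanishes for every $h$. Here is where $[\alpha]\in\Omega(G,S)$ enters: the element $\gamma\in D^{1,1}$ with $d_K\beta=d_{K\rtimes G}\gamma$ produced before the theorem exactly encodes, for each $h\in K$, a $1$-cochain $\gamma_h\in C^1((K\rtimes G)_\bullet,\underline{U(1)})$ witnessing $\beta-f_h^*\beta=d_{K\rtimes G}\gamma_h$; one reads this off by unwinding the $\delta_K$-component of $\gamma$ evaluated at a single group element $h\in K^1$. Thus $\gamma$ provides simultaneously \emph{all} the lifts $(f_h,\gamma_h)\colon(K,\beta)\to(K,f_h^*\beta)$, proving $\End_{G^{op}}(K,[\beta])=K$ and hence that the middle column is a four-term exact sequence; that it is the four-term sequence of a crossed module (equivalently a group object in lifting bundle $U(1)$-gerbes) then follows from the general discussion in \S\ref{Endomorphisms of representations} and Appendix~B, using that $K$ is a group.

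Next I would assemble the diagram. The top two rows: the kernel of $d_{K\rtimes G}\colon C^0((K\rtimes G)_\bullet,\underline{U(1)})\to Z^1((K\rtimes G)_\bullet,\underline{U(1)})$ consists of the $G$-invariant $U(1)$-valued maps on $K$, and since $G$ acts transitively on $K$ these are the constants, i.e.\ $U(1)$; this gives the top square with identical horizontal arrows $U(1)=U(1)$ and the identity map $C^0((K\rtimes G)_\bullet,\underline{U(1)})\to C^0((K\rtimes G)_\bullet,\underline{U(1)})$ in the second row (the second vertical map in the crossed module is just the inclusion of the kernel's domain, so both copies agree). The third row is the defining four-term sequence of $\End_{\langle G,\alpha\rangle^{op}}(K,\beta)$ just established. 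For the bottom square I use the weak equivalence $(pt\rtimes S)\hookrightarrow(K\rtimes G)$ to get the restriction $Z^1((K\rtimes G)_\bullet,\underline{U(1)})\xrightarrow{\sim}\widehat S$ on the left, and I \emph{define} $\widehat G$ to be the pushout of $\End^0_{\langle G,\alpha\rangle^{op}}(K,\beta)$ along this restriction; the induced map $\widehat G\to K$ is then automatically a central extension by $\widehat S$, and its classifying cocycle in $Z^2(K_\bullet,\underline{\widehat S})$ is exactly the image $\widehat F$ of $d_K\gamma$ under the restriction map constructed right before the theorem. Commutativity of the bottom square is then the statement that the extension cocycle of $\End^0_{\langle G,\alpha\rangle^{op}}(K,\beta)\to K$ pushes forward to $\widehat F$, which is immediate from the construction of $\widehat F$ as the image of $d_K\gamma$ and the identification (from Example~\ref{canonical representation}, the acyclicity of $C^*((K\rtimes G)_\bullet,\underline{U(1)})$ in positive degree) of $d_K\gamma$ with the class encoding the extension of the third row.

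The main obstacle I anticipate is the bookkeeping in step (a): one must carefully track through the quadruple complex $C^{i,j,l,m}$ how the single cochain $\gamma\in D^{1,1}$ simultaneously produces a \emph{coherent} family of lifts $(f_h,\gamma_h)$---coherent in the sense that composing the lift for $h_1$ with (the $f_{h_2}$-pullback of) the lift for $h_2$ differs from the lift for $h_1h_2$ by precisely the $\delta_K$-coboundary governed by $d_K\gamma$, so that the cocycle $d_K\gamma\in Z^2(K_\bullet,\underline{Z^1((K\rtimes G)_\bullet,\underline{U(1)})})$ genuinely classifies the monoid extension $\End^0_{\langle G,\alpha\rangle^{op}}(K,\beta)\to K$ rather than just the underlying set-level extension. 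This is a diagram chase in the spirit of Lemma~\ref{lemma classification of central extensions}, but now one level of categorification up; everything else (the top square, transitivity forcing the structure group down to $U(1)$, the weak equivalence identifying $Z^1((K\rtimes G)_\bullet,\underline{U(1)})$ with $\widehat S$) is essentially formal given the results already in the paper.
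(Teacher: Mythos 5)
Your proposal is correct and follows essentially the same route as the paper's proof: use the cochain $\gamma\in D^{1,1}$ guaranteed by $[\alpha]\in\Omega(G,S)$ to lift every translation $f_h$ and so obtain surjectivity onto $K\cong\Map_{G^{op}}(K,K)$, identify the quotient of $\End^0_{\langle G,\alpha\rangle^{op}}(K,\beta)$ by $B^1((K\rtimes G)_\bullet,\underline{U(1)})$ with the extension of $K$ by $\widehat S$ classified by $\widehat F=r_*(d_K\gamma)$, and complete the top of the diagram with the four-term sequence $U(1)\to C^0\to Z^1\to\widehat S$ coming from $H^0=U(1)$ (transitivity) and $H^1=\widehat S$. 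Your ``pushout'' description of $\widehat G$ is just a repackaging of the paper's identification $\widehat G\cong\pi_0(\End_{\langle G,\alpha\rangle^{op}}(K,\beta))$, and the coherence bookkeeping you flag as the main obstacle is exactly the step the paper also treats briefly via the obstruction-to-a-section interpretation of $d_K\gamma$.
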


\begin{proof}
Since the class $[\alpha]$ belongs to $\Omega(G,S)$, we have shown above that there must exist $\gamma \in D^{1,1}$
such that $d_K\beta - d_{K \rtimes G} \gamma=0.$ Therefore $d_K\gamma$ and $\pi^*\alpha$ are cohomologous
and $d_K \gamma \in Z^2(K_\bullet, \underline{Z^1((K \rtimes G)_\bullet, \underline{U(1)})})$ defines the cocycle
$\widehat{F} \in Z^2(K_\bullet, \underline{\widehat{S}})$ by the canonical map induced by the restriction
\begin{align*}r:Z^1((K \rtimes G)_\bullet,\underline{U(1)}) \to \Hom(S, U(1))=\widehat{S}.
\end{align*}

Since we have the isomorphism of topological groups
 \begin{align}
\Map_{G^{op}}(K,K) \cong K,   \ \ f_h \mapsto h \label{eqn map(K,K)}
\end{align}
where $f_h(k)=hk$ for $h,k \in K$,
we know that the existence of $\gamma$ implies that for all $h  \in K$
\begin{align} \label{eqn gamma}
[d_K(\beta)](h)= \beta - f_h^*\beta  = (d_{K \rtimes G}\gamma)(h).
\end{align}
Therefore the projection map
\begin{align} \label{eqn projection EndG(K,K)}
\End^0_{\langle G ,\alpha \rangle^{op}}(K,\beta) \to \Map_{G^{op}}(K,K) \cong K
\end{align}
is surjective.

Now,  $\gamma$ lives in $\Map(K, E C^1((K \rtimes G)_\bullet,\underline{U(1)})) $ and
for every $h \in K$ it chooses an element in $C^1((K \rtimes G)_\bullet,\underline{U(1)})$ such that $\beta - f_h^*\beta =(d_{K \rtimes G}\gamma)(h)$.
The class $d_K\gamma \in Z^2(K_\bullet, \underline{Z^1((K \rtimes G)_\bullet,\underline{U(1)})})$
measures the obstruction for the assignment $\gamma$ to be a section of the homomorphism
$\End^0_{\langle G ,\alpha \rangle}(K,\beta) \to K$. The cocycle $\widehat{F}$ defined
by $d_K\gamma$ through the restriction map
$$r_*:Z^2(K_\bullet, \underline{Z^1((K \rtimes G)_\bullet, \underline{U(1)})}) \to Z^2(K_\bullet, \underline{\widehat{S}}), \ \ d_K \gamma \mapsto r_*(d_K \gamma)=\widehat{F}$$
is equivalently defined as the cocycle in $Z^2(K_\bullet, \underline{H^1(S_\bullet,\underline{U(1) })})$ that
$d_K\gamma$ defines once one takes isomorphism classes of morphisms in
$\End_{\langle G ,\alpha \rangle}(K,\beta)$. Therefore we have the isomorphism of groups
$$\widehat{G} \cong \pi_0(\End_{\langle G ,\alpha \rangle^{op}}(K,\beta) )$$
and $\widehat{F}$ defines the extension class of the exact sequence
$$\widehat{S} \To \pi_0(\End_{\langle G ,\alpha \rangle^{op}}(K,\beta) ) \To K.$$
The rest of the diagram follows from the four term exact sequence
$$U(1) \to C^0((K \rtimes G  )_\bullet, \underline{U(1)}) \to Z^1((K \rtimes G  )_\bullet, \underline{U(1)}) \to \widehat{S}$$
since $H^1((K \rtimes G  )_\bullet, \underline{U(1)}) = \widehat{S}$.
\end{proof}

The crossed module, or group lifting bundle $A$-gerbe, and its four term sequence
$$U(1) \To C^0((K\rtimes G),\underline{U(1)}) \stackrel{d_{K \rtimes G}}{\To} \End^0_{\langle G ,\alpha \rangle^{op}}(K,\beta)  \to \widehat{G}$$
defines a cohomology class in $H^3(\widehat{G}_\bullet, \underline{U(1)})$ which is constructed as follows.

The central extension $\widehat{S} \to G \stackrel{\widehat{p}}{\to} K$ defines an element $\widehat{F}=(-\widehat{F}_1,\widehat{F}_2) \in Z^2(K_\bullet, \underline{\widehat{S}})$ whose components
are $\widehat{F}_1 : K \to B\widehat{S}$ and $ \widehat{F}_2:K\times K \to E\widehat{S}$ satisfying the equations
$$\delta_K\widehat{F}_1 (k_1,k_2) = \widehat{F}_1(k_2) - \widehat{F}_1(k_1k_2) + \widehat{F}_1(k_1)= \partial \widehat{F}_2(k_1,k_2), \ \ \delta_K\widehat{F}_2=0.$$
Let $\widehat{F}_1':\widehat{G} \to E\widehat{S}$ be the map of $\widehat{S}$-principal bundles making the diagram commutative
$$\xymatrix{
\widehat{G} \ar[d]_{\widehat{p}} \ar[r]^{\widehat{F}_1'} \ar[d] & E\widehat{S}\ar[d] \\
K \ar[r]^{\widehat{F}_1} & B\widehat{S}.}$$

Then the map $\widehat{F}'_1$ could be seen as living in $C^1(\widehat{G}, \underline{\widehat{S}})$ and it satisfies the equations
 $$\delta_{\widehat{G}}\widehat{F}'_1=\widehat{p}^*\widehat{F}_2, \ \ \ \partial \widehat{F}'_1= \widehat{p}^*\widehat{F}_1, $$
hence $(\delta_{\widehat{G}}+(-1)^l \partial) \widehat{F}'_1= d_{\widehat{G}} \widehat{F}'_1=\widehat{p}^*\widehat{F}$.

Let us carry out the same construction for the central extension $S \to G \stackrel{p}{\to} K$. Define
its extension class $F=(-F_1,F_2) \in Z^2(K_\bullet,\underline{S})$ and the map of $S$-principal bundles $F'_1 G \to ES$ satisfying
 $\delta_GF'_1= p^*F_2$ and $\partial F'_1=p^*F_1$.
 
 Define the cocycle $E =(E_0,E_1) \in Z^1((K \rtimes G)_\bullet, \underline{S})$ with $E_0 \in \Map(K, BS)$ and $E_0=F_1$ by the equations
 $$ E_0=F_1, \ \ E_1(k,g)=F_1'(g)-F_2(k,p(g)).$$
 A simple calculation shows that the following equations are satisfied
 \begin{align*}
 \delta_{K \rtimes G} E_0 (k,g) = F_1(kg)-F_1(k)&=F_1(p(g))-\partial F_2(k, p(g)) = \partial E_1 (k,g)\\
  \delta_{K \rtimes G} E_1 (k,g_1,g_2)&= 0.
  \end{align*}
  Any element $\rho \in \widehat{S}$ induces homomorphisms $ES \to EU(1)$ and $BS \to BU(1)$ and therefore
  we may define the homomorphism
  $$\psi : \widehat{S} \to Z^1((K \rtimes G)_\bullet, \underline{U(1)}), \ \ \rho \mapsto \psi(\rho) =\rho \circ (E_0,E_1),$$
  which is an inverse to the restriction homomorphism $Z^1((K \rtimes G)_\bullet, \underline{U(1)}) \to \widehat{S}$.
  Nevertheless, the map $\psi$ does not preserve the structure of $K$-module and this deficiency
  is quite important.
  
   Let us consider the homomorphism 
   $$C^1(\widehat{G}_\bullet,\underline{\widehat{S}}) \stackrel{\psi_*}{\To} C^1(\widehat{G}_\bullet,\underline{Z^1((K \rtimes G)_\bullet, \underline{U(1)})})$$
  induced by the map $\psi$ at the level of the coefficients and take $\psi_*\widehat{F}'_1$ the image of $\widehat{F}'_1$.
  The element $\psi_*\widehat{F}'_1 : \widehat{G} \to EZ^1((K \rtimes G)_\bullet, \underline{U(1)})$ should be understood as the lift of the map $\widehat{F}'_1:  \widehat{G} \to E\widehat{S} $ to 
  $Z^1((K \rtimes G)_\bullet, \underline{U(1)})$ in the left vertical map of the diagram
  $$\xymatrix{
  Z^1((K \rtimes G)_\bullet, \underline{U(1)}) \ar[r] \ar[d]& \End^0_{\langle G ,\alpha \rangle^{op}}(K,\beta)\ar[d] \ar[r] & K \ar@{=}[d] \\
  \widehat{S} \ar[r] & \widehat{G} \ar[r] & K. 
  }$$
  
  The failure of the map $\psi_*\widehat{F}'_1$ to induce a section is measured by the cocycle
  $$d_{\widehat{G}}  (\psi_*\widehat{F}'_1) \in Z^2(\widehat{G}_\bullet,\underline{Z^1((K \rtimes G)_\bullet, \underline{U(1)})}),$$
and its restriction to  $Z^2(\widehat{G}_\bullet,\underline{\widehat{S}})$ is 
 $$r_* \left( d_{\widehat{G}} (\psi_*\widehat{F}'_1) \right)= \widehat{p}^*\widehat{F}.$$
 
The element $$[d_{\widehat{G}} (\psi_*\widehat{F}'_1) - \widehat{p}^*(d_K\gamma)] \in Z^2(\widehat{G}_\bullet,\underline{Z^1((K \rtimes G)_\bullet, \underline{U(1)})})$$ restricts to zero in 
$Z^2(\widehat{G}_\bullet,\underline{\widehat{S}})$ since 
 $r_*(\widehat{p}^*(d_K\gamma))= \widehat{p}^*(r_*(d_K\gamma)=\widehat{p}^*\widehat{F}$. Therefore 
 $$[d_{\widehat{G}} (\psi_*\widehat{F}'_1) - \widehat{p}^*(d_K\gamma)] \in Z^2(\widehat{G}_\bullet,\underline{B^1((K \rtimes G)_\bullet, \underline{U(1)})})$$ where $B^1((K \rtimes G)_\bullet, \underline{U(1)})$ are the coboundaries fitting in the short exact sequence
$$ B^1((K \rtimes G)_\bullet, \underline{U(1)}) \hookrightarrow Z^1((K \rtimes G)_\bullet, \underline{U(1)}) \stackrel{r}{\to} \widehat{S}.$$
 Now, the coboundaries also fit in the short exact sequence of abelian groups in $\widehat{G}$-$\Topab$
 $$U(1) \to C^0((K \rtimes G)_\bullet, \underline{U(1)}) \to B^1((K \rtimes G)_\bullet, \underline{U(1)})$$ thus inducing a map in cohomology
 \begin{align*}
H^2(\widehat{G}_\bullet,\underline{B^1((K \rtimes G)_\bullet, \underline{U(1)})}) & \to H^3(\widehat{G}_\bullet,\underline{U(1)})\\
[d_{\widehat{G}} (\psi_*\widehat{F}'_1) - \widehat{p}^*(d_K\gamma)]& \mapsto [\widehat{\alpha}_{(K,\beta)}].
\end{align*}

The explicit description of the cocycle $ \widehat{\alpha}_{(K,\beta)} \in Z^3(\widehat{G}_\bullet,
  \underline{U(1)})$ depends on several choices, but its cohomology class only depends on the multiplicative Segal-Mitchison gerbe $\langle G, \alpha \rangle$ 
  and the representation $(K, \beta)$. The cohomology class $ [\widehat{\alpha}_{(K,\beta)}]$ characterizes the group lifting bundle $A$-gerbe $\End_{\langle G, \alpha \rangle^{op}}(K,\beta)$.
  
Bundling up we obtain:
  
  \begin{theorem} \label{Theorem cohomology class}
  The crossed module, or group lifting bundle $A$-gerbe,
  $$C^0((K \rtimes G)_\bullet, \underline{U(1)}) \To \End^0_{\langle G ,\alpha \rangle^{op}}(K,\beta)$$
  induced by the category $\End_{\langle G ,\alpha \rangle^{op}}(K,\beta)$ of endomorphisms of the right $\langle G, \alpha \rangle$-representation $(K,\beta)$
  has $\widehat{G}$ for cokernel,  $U(1)$ as kernel and it is classified by the cohomology class
   $$[\widehat{\alpha}_{(K,\beta)}]  \in  H^3(\widehat{G}_\bullet,
  \underline{U(1)})$$
  defined above.
  \end{theorem}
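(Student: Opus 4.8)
The plan is to derive the statement from the construction carried out in the paragraphs just above it, converting the two assertions made there into proofs: that $[\widehat\alpha_{(K,\beta)}]$ is the class classifying the crossed module, and that it is independent of the auxiliary choices. The claims about the kernel and cokernel need no new argument, since the commutative diagram of Theorem~\ref{Theorem crossed module} already displays the four term sequence
$$U(1)\hookrightarrow C^0((K\rtimes G)_\bullet,\underline{U(1)})\To\End^0_{\langle G,\alpha\rangle^{op}}(K,\beta)\To\widehat G,$$
whose middle map is $d_{K\rtimes G}$: its kernel is $U(1)$, the $G$-invariant $U(1)$-valued maps on the transitive $G$-space $K$, its image is $B^1((K\rtimes G)_\bullet,\underline{U(1)})$, and the cokernel is $\widehat G$ because $Z^1/B^1\cong H^1((K\rtimes G)_\bullet,\underline{U(1)})\cong\widehat S$ while $\End^0_{\langle G,\alpha\rangle^{op}}(K,\beta)/B^1((K\rtimes G)_\bullet,\underline{U(1)})$ is the middle term of $\widehat S\to\widehat G\to K$. (Here $\widehat G$ acts trivially on the kernel $U(1)$, consistently with the coefficients $\underline{U(1)}$ in the statement, but nontrivially, through $\widehat G\to K$, on $B^1((K\rtimes G)_\bullet,\underline{U(1)})$, so the latter must be handled equivariantly.) Thus only the statement about the classifying class is really at stake.

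I would then recall how such a crossed module is classified. Exactly as in the description of $H^2(G_\bullet,\underline{BA})\to H^3(G_\bullet,\underline{A})$ in \S\ref{subsection cohomology topological groups} and the treatment of crossed modules in Appendix~A \S\ref{subsection classification crossed modules}, a crossed module $\partial\colon V^0\to V^1$ with central kernel $A=\ker\partial$, image $W=\Im\partial\cong V^0/A$ and cokernel $Q$ is classified by the image, under the connecting homomorphism $H^2(Q_\bullet,\underline W)\to H^3(Q_\bullet,\underline A)$ of the short exact sequence $A\to V^0\to W$ of $Q$-modules, of the class in $H^2(Q_\bullet,\underline W)$ of the extension $W\to V^1\to Q$ (read equivariantly when $Q$ acts nontrivially on $W$, as here). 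In our situation $V^0=C^0((K\rtimes G)_\bullet,\underline{U(1)})$ is contractible, $A=U(1)$, $W=B^1((K\rtimes G)_\bullet,\underline{U(1)})$, $Q=\widehat G$, and the connecting map in question is precisely the one producing $\widehat\alpha_{(K,\beta)}$ in the paragraph before the theorem. By Theorem~\ref{theorem equivalence SM-gerbes LB-gerbes} this class equally records the group lifting bundle $U(1)$-gerbe $\End_{\langle G,\alpha\rangle^{op}}(K,\beta)$ (group objects of that kind over $\widehat G$ being classified by $H^3(\widehat G_\bullet,\underline{U(1)})$, in parallel with the classification of multiplicative Segal--Mitchison gerbes over a fixed monoid). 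Hence the theorem reduces to identifying the class in $H^2(\widehat G_\bullet,\underline{B^1((K\rtimes G)_\bullet,\underline{U(1)})})$ of the extension $B^1((K\rtimes G)_\bullet,\underline{U(1)})\to\End^0_{\langle G,\alpha\rangle^{op}}(K,\beta)\to\widehat G$ with $[\,d_{\widehat G}(\psi_*\widehat{F}_1')-\widehat p^*(d_K\gamma)\,]$.

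This identification is the crux, and I would isolate it as a lemma. Unwinding the description of $\End^0_{\langle G,\alpha\rangle^{op}}(K,\beta)$ from \S\ref{section representations}, an element lying over $h\in K\cong\Map_{G^{op}}(K,K)$ of \eqref{eqn map(K,K)} is a pair $(f_h,\gamma')$ with $\beta-f_h^*\beta=d_{K\rtimes G}\gamma'$; by \eqref{eqn gamma} the cochain $\gamma$ furnishes such a primitive $\gamma'=\gamma(h)$ for every $h$, so $h\mapsto(f_h,\gamma(h))$ is a continuous set-theoretic section of $\End^0_{\langle G,\alpha\rangle^{op}}(K,\beta)\to K$ whose coboundary is, up to the sign conventions of \eqref{definition D**}, exactly the cocycle $d_K\gamma\in Z^2(K_\bullet,\underline{Z^1((K\rtimes G)_\bullet,\underline{U(1)})})$. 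Combining this section with the lift $\psi_*\widehat{F}_1'$ — which, via the $\widehat S$-bundle map $\widehat{F}_1'\colon\widehat G\to E\widehat S$ of $\widehat S\to\widehat G\to K$, promotes the $\widehat S$-direction of $\widehat G$ into $Z^1((K\rtimes G)_\bullet,\underline{U(1)})$ — produces a continuous set-theoretic section of $\End^0_{\langle G,\alpha\rangle^{op}}(K,\beta)\to\widehat G$, whose coboundary is $d_{\widehat G}(\psi_*\widehat{F}_1')-\widehat p^*(d_K\gamma)$; this takes values in $B^1((K\rtimes G)_\bullet,\underline{U(1)})$ precisely because applying the restriction $r_*$ to either summand gives $\widehat p^*\widehat F$. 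Computing equivariantly as in the proof of Lemma~\ref{lemma classification of central extensions}, this coboundary represents the sought extension class. The genuine work here is bookkeeping — keeping the differentials $\delta_K$, $\delta_{K\rtimes G}$, $\delta_{\widehat G}$, $\partial_1$, $\partial_2$ and their signs in order, and keeping the $K$-action, the $\widehat G$-action and the fibrewise structure separated — which I would organize by computing throughout inside $\Tot(C^{*,*,*,*})$ of \eqref{definition D**}; I expect this to absorb most of the effort.

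It remains to check that $[\widehat\alpha_{(K,\beta)}]$ is independent of the auxiliary choices entering its construction: the primitive $\gamma\in D^{1,1}$ of $d_K\beta$ (unique up to a $d_{K\rtimes G}$-cocycle in $D^{1,1}$), the bundle lift $\widehat{F}_1'$, and the splitting $\psi$ of $r\colon Z^1((K\rtimes G)_\bullet,\underline{U(1)})\to\widehat S$. I would verify that each change alters $\widehat\alpha_{(K,\beta)}$ by an explicit $d_{\widehat G}$-coboundary in $Z^3(\widehat G_\bullet,\underline{U(1)})$. The splitting is the subtlest case: two splittings differ by a homomorphism $\widehat S\to B^1((K\rtimes G)_\bullet,\underline{U(1)})$, and the resulting change of $\widehat\alpha_{(K,\beta)}$ is the image of that difference under the same connecting map $H^2(\widehat G_\bullet,\underline{B^1((K\rtimes G)_\bullet,\underline{U(1)})})\to H^3(\widehat G_\bullet,\underline{U(1)})$, hence a coboundary; the remaining cases are shorter variants of the same calculation. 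Finally, the topological hypotheses underpinning all the connecting maps and classifications — compact generation and local contractibility of $C^0((K\rtimes G)_\bullet,\underline{U(1)})$, $Z^1((K\rtimes G)_\bullet,\underline{U(1)})$ and $B^1((K\rtimes G)_\bullet,\underline{U(1)})$, and local compactness of $S$ and hence of $\widehat S$ — are all guaranteed by the standing assumptions and Segal's results on the functors $E$ and $B$, so I anticipate no trouble there.
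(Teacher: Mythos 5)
Your proposal is correct and follows essentially the same route as the paper, which states this theorem as a summary (``Bundling up we obtain'') of the construction immediately preceding it rather than supplying a separate proof. Your explicit identification of $[d_{\widehat G}(\psi_*\widehat F_1')-\widehat p^*(d_K\gamma)]$ with the extension class of $B^1((K\rtimes G)_\bullet,\underline{U(1)})\To\End^0_{\langle G,\alpha\rangle^{op}}(K,\beta)\To\widehat G$, and your independence-of-choices checks, merely flesh out steps the paper asserts without proof.
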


  Both groups $G$ and $\widehat{G}$ project to $K$ and the fibers are
  $S$ and $\widehat{S}= \Hom(S, U(1))$ respectively. Since $S$ is a locally compact abelian group, its Pontrjagin dual group $\widehat{S}$ is also a locally compact abelian group \cite[First Fundamental Theorem]{Pontrjagin}. Therefore we propose the following definition.
  
\begin{definition} \label{definition pontrjagin dual}
  The multiplicative Segal-Mitchison gerbe $ \langle \widehat{G}, \widehat{\alpha}_{(K,\beta)} \rangle$ is a fibrewise Pontrjagin dual of the 
multiplicative Segal-Mitchison gerbe $\langle G, \alpha \rangle$.

\end{definition}

 Let us now see some specific cases.

  \subsection{Classical Pontrjagin duality} Take $S$ a locally compact abelian group and $\alpha=0$.
   By the Second Fundamental Theorem of Pontrjagin \cite[p. 377]{Pontrjagin}, cf. \cite[Thm. 2]{Morris}, the group $S$ is  locally compact and
  locally contractible and therefore it satisfies the conditions of Theorem 
  \ref{Theorem crossed module}. Consider
  the representation of $\langle S,0 \rangle$ to be $(*,0)$ where $*$ is a point.
  
  The 2-group $\End_{\langle S,0 \rangle}(*,0)$ is isomorphic to $U(1)[1] \times \widehat{S}$ since its four
term sequence is 
$$ U(1) \To  C^0((* \rtimes S)_\bullet, \underline{U(1)}) \To Z^1((* \rtimes S)_\bullet, \underline{U(1)}) \To \widehat{S}$$
  and its cohomology class in $H^3(\widehat{S}_\bullet, \underline{U(1)})$ is trivial because the last homomorphism splits through the 
inclusion of groups
$\widehat{S} = \Hom(S, U(1)) \subset  Z^1((* \rtimes S)_\bullet, \underline{U(1)})$.
  
  We see that the fibrewise dual of the multiplicative Segal-Mitchison $U(1)$-gerbe $\langle S, 0 \rangle$ is the multiplicative Segal-Mitchison $U(1)$-gerbe $\langle \widehat{S}, 0 \rangle$
  and vice versa, thus recovering Pontrjagin's duality in the context of multiplicative Segal-Mitchison gerbes.
  \subsection{Multiplicative Segal-Mitchison gerbe with trivial cocycle}
  Consider a non-trivial central extension $S \to G \to K$ and $\alpha =0$. Then
  we may take the representation to be $(K,0)$ and therefore we get that
  $$\End^0_{\langle G ,0 \rangle^{op}}(K,0) \cong K \ltimes Z^1((K \rtimes G)_\bullet, \underline{U(1)}).$$
  Hence the fibrewise Pontrjagin dual multiplicative Segal-Mitchison gerbe has for underlying group $\widehat{G}=K \times \widehat{S}$. Now consider $\widehat{F}'_1: K \times \widehat{S} \to E\widehat{S} $
  to be defined by the inclusion $\widehat{S} \to E\widehat{S}$.  Following the construction we get
  $(\psi_*\widehat{F}'_1) (k, \rho) = \rho \circ (E_0,E_1)$
  and 
  $$[d_{\widehat{G}} (\psi_*\widehat{F}'_1) ]((k_1, \rho_1),(k_2, \rho_2))= \rho_1(k_2^*(E_0,E_1)-(E_0,E_1)).$$
  The cocycle thus defined  $\widehat{\alpha}_{(K,0)}=(\widehat{\alpha}^3_{(K,0)},\widehat{\alpha}^2_{(K,0)})$ with $\widehat{\alpha}^3_{(K,0)}: \widehat{G}^3 \to EU(1)$ and $\widehat{\alpha}^2_{(K,0)}: \widehat{G}^2 \to BU(1)$ is defined in
  coordinates as follows:
  \begin{align} \label{alpha hat}
  \widehat{\alpha}^3_{(K,0)} ((k_1, \rho_1),(k_2, \rho_2), (k_3, \rho_3)) =\rho_1(F_2(k_2,k_3)) \in EU(1) \\ \nonumber
  \widehat{\alpha}^2_{(K,0)} ((k_1, \rho_1),(k_2, \rho_2)) = \rho_1(F_1(k_2)) \in BU(1).
  \end{align}
  Therefore the cohomology class $[\widehat{\alpha}_{(K,0)}] \in H^3(K \times \widehat{S}_\bullet, \underline{U(1)})$
  is simply obtained by the composition of $[F] \in H^2(K_\bullet, \underline{S})$ with $\widehat{S}=\Hom(S,U(1))$, and the Multiplicative Segal-Mitchison
  gerbe $\langle K \times \widehat{S},\widehat{\alpha}_{(K,0)}\rangle$ is a fibrewise
  Pontrjagin dual of $\langle G,0\rangle$.
 \subsection{Multiplicative Segal-Mitchison gerbe over a trivial extension}
  Let us start now with the trivial extension $\widehat{G} = K \times \widehat{S}$ and the cocycle 
  $\widehat{\alpha}_{(K,0)}$ defined by the equations \eqref{alpha hat}. In this case the dual group is 
  $$\pi_0(\End^0_{\langle K \times \widehat{S} ,\widehat{\alpha}_{(K,0)} \rangle}(K,0)) \cong G$$ with 
  cocycle $F \in Z^2(K_\bullet, \underline{S})$, and the cohomology class of the crossed module
  is trivial because $$d_{G}\psi_*F'_1= p^*\psi_* F$$
  and $\gamma$ may be chosen so that $d_G\gamma = \psi_* F$.
Therefore, we conclude:
\begin{proposition} \label{Cor dual gerbes} The multiplicative Segal-Mitchison $U(1)$-gerbes
$$\langle G, 0 \rangle  \ \ \mbox{and} \ \ \langle K \times \widehat{S}, \widehat{\alpha}_{(K,0)} \rangle$$
defined above are mutually fibrewise Pontrjagin dual.
\end{proposition}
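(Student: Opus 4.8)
The plan is to verify that the two gerbes in Proposition \ref{Cor dual gerbes} are each a fibrewise Pontrjagin dual of the other, which by symmetry amounts to checking two directions. The first direction---that $\langle K \times \widehat{S}, \widehat{\alpha}_{(K,0)} \rangle$ is a fibrewise Pontrjagin dual of $\langle G, 0 \rangle$---is precisely the content of the previous subsection ``Multiplicative Segal-Mitchison gerbe with trivial cocycle'', applied to the nontrivial central extension $S \to G \to K$ with trivial cocycle $\alpha = 0$ and representation $(K, 0)$. There one must first check that $[\alpha] = 0$ lies in $\Omega(G,S)$: since $\alpha = 0$, its restrictions to $E_\infty^{0,3}$ and $E_\infty^{1,2}$ obviously vanish, so $0 \in \Omega(G,S)$ and Theorem \ref{Theorem crossed module} applies. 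The explicit cocycle $\widehat{\alpha}_{(K,0)}$ was computed in equations \eqref{alpha hat}, and the underlying dual group was identified as $\widehat{G} = K \times \widehat{S}$.

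For the second direction I would carry out the construction of Theorem \ref{Theorem crossed module} starting from $\langle K \times \widehat{S}, \widehat{\alpha}_{(K,0)} \rangle$, with the central extension being the \emph{trivial} one $\widehat{S} \to K \times \widehat{S} \to K$ and with representation $(K, 0)$. First I would check that $[\widehat{\alpha}_{(K,0)}]$ indeed lies in $\Omega(K \times \widehat{S}, \widehat{S})$: the relevant computation is that $[\widehat{\alpha}_{(K,0)}]$ restricted to the fibre $\widehat{S}$ vanishes (this is immediate from the formulas \eqref{alpha hat}, which only involve the $k$-coordinates of the second and third arguments and the $\rho_1$-coordinate of the first), and that its image in $E_\infty^{1,2}$ vanishes---the latter being guaranteed because $\widehat{\alpha}_{(K,0)}$ is built from the $K$-cocycle $F$, so it sits in the image of $d_2: H^2(K_\bullet, \underline{\widehat{S}}) \to H^4(K_\bullet,\underline{U(1)})$ trivially, i.e.\ the class already has a representative of the required bidegree. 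Then the dualization procedure produces $\pi_0(\End^0_{\langle K \times \widehat{S}, \widehat{\alpha}_{(K,0)}\rangle}(K,0))$, which I would identify with $G$: the extension class of $\widehat{S} \to \pi_0(\End) \to K$ is the cocycle $r_*(d_{\widehat{G}}\gamma) \in Z^2(K_\bullet, \underline{S})$, and the key claim is that this cocycle is cohomologous to the original $F \in Z^2(K_\bullet, \underline{S})$ defining $G$, hence $\pi_0(\End) \cong G$ as a central extension of $K$ by $S$.

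The remaining point is that the cohomology class $[\widehat{\widehat{\alpha}}_{(K,0)}] \in H^3(G_\bullet, \underline{U(1)})$ classifying the resulting group lifting bundle gerbe is trivial, matching $\langle G, 0 \rangle$. Following the recipe after Theorem \ref{Theorem crossed module}, this class is the image under $H^2(G_\bullet, \underline{B^1((K\rtimes G)_\bullet,\underline{U(1)})}) \to H^3(G_\bullet,\underline{U(1)})$ of $[d_G(\psi_* F'_1) - p^*(d_{\widehat G}\gamma)]$; the paper's text already asserts the two identities $d_G \psi_* F'_1 = p^* \psi_* F$ and that $\gamma$ can be chosen with $d_{\widehat G}\gamma = \psi_* F$, so the difference vanishes on the nose and the class is $0$. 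Assembling these facts and invoking Definition \ref{definition pontrjagin dual} in both directions gives the proposition. The main obstacle I anticipate is the careful bookkeeping in the second direction: one must track the roles of $F_1, F_2, E_0, E_1, F'_1$ and the map $\psi$ through the iterated four-fold complex $D^{*,*}$, and in particular verify that the choice of $\gamma$ with $d_{\widehat G}\gamma = \psi_* F$ is consistent with equation \eqref{eqn beta} for the representation $(K,0)$ of $\langle K \times \widehat{S}, \widehat{\alpha}_{(K,0)}\rangle$---that is, that $d_{K \rtimes (K \times \widehat S)}(\text{the chosen }\beta) = \pi^* \widehat{\alpha}_{(K,0)}$ with $\beta$ compatible with the $\gamma$ above. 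Once that compatibility is pinned down, the triviality of the resulting class and the identification of the dual group follow from the displayed identities.
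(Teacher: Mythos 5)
Your proposal is correct and follows essentially the same route as the paper: one direction is exactly the computation of the preceding subsection (the dual of $\langle G,0\rangle$ with representation $(K,0)$ is $\langle K\times\widehat{S},\widehat{\alpha}_{(K,0)}\rangle$ via the formulas \eqref{alpha hat}), and the other direction is the paper's own argument that starting from the trivial extension the dual group is $G$ with extension class $F$ and the resulting class in $H^3(G_\bullet,\underline{U(1)})$ vanishes because $d_{G}\psi_*F'_1=p^*\psi_*F$ and $\gamma$ can be chosen with $d\gamma=\psi_*F$. Your added checks that the relevant classes lie in $\Omega(G,S)$ and $\Omega(K\times\widehat{S},\widehat{S})$ are details the paper leaves implicit, but they do not change the argument.
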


\subsection{Groups whose connected components of the identity are contractible}
In the case that $G$, $\widehat{G}$, $S$, $\widehat{S}$ and $K$ are groups
whose connected components of the identity are contractible,
we can provide an explicit description of the class $  [\widehat{\alpha}_{(K,\beta)}]$ using the results of \cite[\S 3]{Uribe}.

For these specific choice of groups we know by Lemma \ref{lemma A contractible} that
$$H^*(G_\bullet, \underline{U(1)}) \cong H^*_{cont}(G,U(1))$$
where $H^*_{cont}(G,U(1))$ denotes the cohomology of the complex
of continuous cochains of the group $G$ with values in $U(1)$.

Following the notation, the formulas and the calculations of 
\cite[\S 3]{Uribe} we may take $G \cong S \rtimes_F K$ where $F \in Z^2_c(K,S)$ and the product
structure on  $S \rtimes_F K$ is given by the formula
$$(a_1,k_1)\cdot (a_2,k_2)= (a_1a_2F(k_1,k_2), k_1k_2).$$

From \cite[Thm. 3.6]{Uribe} we know that the 3-cocycle $\alpha \in Z^3_c(G,U(1))$ can be defined as
$$\alpha((a_1,k_1),(a_2,k_2), (a_3,k_3))= \widehat{F}(k_1,k_2)(a_3) \epsilon(k_1,k_2,k_3)$$
where $\widehat{F} \in Z^2(K, \widehat{S})$ and 
$$\delta_K \epsilon(k_1,k_2,k_3,k_4)= \widehat{F}(k_1,k_2) (F(k_3,k_4))$$
with $\epsilon \in C^3_c(K,U(1))$.
 
From this definition of $\alpha$ we know that $\widehat{G} \cong K \ltimes_{\widehat{F}} \widehat{S}$ with product structure
$$(k_1, \rho_1)\cdot (k_2,\rho_2)= (k_1k_2,\rho_1\rho_2\widehat{F}(k_1,k_2)).$$
From \cite[Thm. 3.6]{Uribe} we know that $\beta \in C^2_c(K \rtimes G, U(1))$ 
can be written as follows  
$$\beta(k_1, (a_2, k_2),(a_3, k_3)) = \left(\widehat{F}(k_1,k_2)(a_3) \epsilon(k_1,k_2,k_3) \right)^{-1}$$
and $\gamma \in C^1_c(K, C^1_c(K \rtimes G, U(1))$ by the equation
$$\gamma(k_1, (x_1,(a_2,x_2)))) = \widehat{F}(k_1,x_1)(a_2) \epsilon(k_1,x_1,x_2);$$
therefore $\widehat{p}^*d_1\gamma$ satisfies the following  equation
\begin{align*}
(\widehat{p}^*d_1 \gamma) ( (k_1, \rho_1), (k_2,\rho_2)) (x_1,(a_2,x_2))=&\\ \widehat{F}(k_1,k_2) & (a_2F(x_1x_2)) \epsilon(k_1,k_2,x_1x_2) 
\epsilon(k_1,k_2,x_1)^{-1}.\end{align*}

The cochain $\widehat{F}'_1$ can be defined by the
map $\widehat{F}'_1(k, \rho)= \rho^{-1}$
and therefore $\psi_* \widehat{F}'_1$ satisfies the following equation
$$\psi_* \widehat{F}'_1 (k,\rho)(x_1,(a_2,x_2))= \rho(a_2F(x_1,x_2))^{-1}.$$

Calculating the coboundary we obtain
\begin{align*}
\delta_{\widehat{G}} \psi_* \widehat{F}'_1 ((k_1, \rho_1), (k_2,\rho_2))(x_1,(a_2,x_2))= &\\ \widehat{F}(k_1,k_2)(a_2F(x_1,x_2)) &\rho_1(F(k_2,x_1)F(k_2,x_1x_2)^{-1})
\end{align*}
and therefore 
\begin{align*}
[\delta_{\widehat{G}} (\psi_*\widehat{F}'_1) - \widehat{p}^*(d_1\gamma)]&
((k_1, \rho_1), (k_2,\rho_2))(x_1,(a_2,x_2)) \\
 =& \epsilon(k_1,k_2,x_1x_2) ^{-1}
\epsilon(k_1,k_2,x_1) \rho_1(F(k_2,x_1x_2)^{-1}F(k_2,x_1)) \\
=& \delta_{K \rtimes G} \eta (((k_1, \rho_1), (k_2,\rho_2))(x_1,(a_2,x_2))
\end{align*}
where 
$\eta((k_1,\rho_1),(k_2,\rho_2),x))= \epsilon(k_1,k_2,x)^{-1} \rho_1(F(k_2,x))^{-1}$
with $\eta \in C^2_c(\widehat{G}, C^0_c(K \rtimes G, U(1)))$. The coboundary of $\eta$
satisfies the equation
$$\delta_{\widehat{G}}\eta((k_1,\rho_1),(k_2,\rho_2),(k_3, \rho_3),x) = \epsilon(k_1,k_2,k_3) \rho_1(F(k_2,k_3))$$
and it is independent of the variable $x$. We obtain that the dual class $\widehat{\alpha}_{(K, \beta)}$ can be written explicitly by the following formula
$$\widehat{\alpha}_{(K, \beta)}((k_1,\rho_1),(k_2,\rho_2),(k_3, \rho_3))= 
\epsilon(k_1,k_2,k_3) \rho_1(F(k_2,k_3)).$$

From the previous calculations we can conclude the following theorem
which generalizes \cite[Thm. 3.7]{Uribe}.
 
  \begin{theorem} \label{theorem identity contractible}
  Let $K$ be a compactly generated topological group
  and $S$ a locally compact and compactly generated topological abelian group, such that
  the connected components of the identity on $K$, $S$ and $\widehat{S}$ are contractible groups.
  Consider the groups $G = S \rtimes_F K$ and $\widehat{G} = K \ltimes_{\widehat{F}} \widehat{S}$ with $\widehat{S}$ the Pontrjagin dual of $S$, $F \in Z^2_c(K,S)$,
  $\widehat{F} \in Z^2_c(K,\widehat{S})$ such that $\delta_K \epsilon (k_1,k_2,k_3,k_4)= \widehat{F}(k_1,k_2)(F(k_3,k_4))$ for $\epsilon \in C^3_c(K,U(1))$.
  Define the 3-cocycles $\alpha \in Z^3_c(G,U(1))$ and $\widehat{\alpha} \in Z^3_c(\widehat{G},U(1))$ by the equations
  \begin{align*}
  \alpha((a_1,k_1),(a_2,k_2), (a_3,k_3))= & \widehat{F}(k_1,k_2)(a_3) \epsilon(k_1,k_2,k_3)\\
  \widehat{\alpha}((k_1,\rho_1),(k_2,\rho_2),(k_3, \rho_3))= &
\epsilon(k_1,k_2,k_3) \rho_1(F(k_2,k_3)).
  \end{align*}
Then the multiplicative Segal-Mitchison gerbes $\langle G, \alpha \rangle$
and $\langle \widehat{G}, \widehat{\alpha} \rangle$ are fibrewise Pontrjagin dual.
  \end{theorem}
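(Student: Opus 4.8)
The plan is to observe that the chain of explicit computations displayed in the paragraphs immediately preceding the statement already contains everything needed, and to organize it as a verification that the hypotheses of Theorems~\ref{Theorem crossed module} and~\ref{Theorem cohomology class} are satisfied and that the cohomology class they produce is precisely $[\widehat\alpha]$.

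First I would set up the inputs. Because the identity components of $K$, $S$ and $\widehat S$ are contractible, Lemma~\ref{lemma A contractible} identifies Segal--Mitchison cohomology with continuous-cochain cohomology in all the relevant arguments $G$, $\widehat G$, $K$ and $K\rtimes G$, which is what gives the explicit cocycle formulas a meaning; moreover $S$ is locally compact and locally contractible, so $\widehat S$ is locally compact by Pontrjagin duality \cite{Pontrjagin} and locally contractible by hypothesis, and $G=S\rtimes_F K$ with the stated product is a genuine central extension $S\to G\to K$ since the $K$-action on $S$ is trivial and $G\to K$ is (topologically) the trivial principal $S$-bundle. Thus the standing hypotheses of Theorem~\ref{Theorem crossed module} hold. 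Next I would check $[\alpha]\in\Omega(G,S)$: restricting $\alpha$ to $S$ gives the normalized, hence cohomologically trivial, cocycle $\widehat F(1,1)(a_3)\,\epsilon(1,1,1)$, so $[\alpha|_S]=0$, and the vanishing of the resulting class in $E_\infty^{1,2}$ is exhibited by the explicit elements: one takes the representation $(K,\beta)$ with $\beta\in C^2_c(K\rtimes G,U(1))$ as written above and verifies $d_{K\rtimes G}\beta=\pi^*\alpha$, and then verifies $d_K\beta=d_{K\rtimes G}\gamma$ for the explicit $\gamma\in C^1_c(K,C^1_c(K\rtimes G,U(1)))$, the only nontrivial identity being the one where the compatibility hypothesis $\delta_K\epsilon(k_1,k_2,k_3,k_4)=\widehat F(k_1,k_2)(F(k_3,k_4))$ is consumed. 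The fact that a $3$-cocycle of this normal form represents a generic element of $\Omega(G,S)$ is where \cite[Thm.~3.6]{Uribe} is invoked.

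With the hypotheses in hand, Theorem~\ref{Theorem crossed module} produces the group lifting bundle $U(1)$-gerbe $\End_{\langle G,\alpha\rangle^{op}}(K,\beta)$ whose cokernel is the central extension of $K$ by $\widehat S$ classified by $r_*(d_K\gamma)\in Z^2(K_\bullet,\underline{\widehat S})$; computing this restriction from the explicit $\gamma$ gives exactly $\widehat F$, hence $\widehat G\cong K\ltimes_{\widehat F}\widehat S$ with the product $(k_1,\rho_1)(k_2,\rho_2)=(k_1k_2,\rho_1\rho_2\widehat F(k_1,k_2))$. Then Theorem~\ref{Theorem cohomology class} classifies this gerbe by $[\widehat\alpha_{(K,\beta)}]\in H^3(\widehat G_\bullet,\underline{U(1)})$, and the computation carried out above---taking $\widehat F_1'(k,\rho)=\rho^{-1}$, transporting it along $\psi$ to $\psi_*\widehat F_1'$, forming $\delta_{\widehat G}\psi_*\widehat F_1'-\widehat p^*(d_K\gamma)$, and writing it as $\delta_{K\rtimes G}\eta$ with $\eta((k_1,\rho_1),(k_2,\rho_2),x)=\epsilon(k_1,k_2,x)^{-1}\rho_1(F(k_2,x))^{-1}$---yields $\delta_{\widehat G}\eta((k_1,\rho_1),(k_2,\rho_2),(k_3,\rho_3),x)=\epsilon(k_1,k_2,k_3)\,\rho_1(F(k_2,k_3))$, which is independent of $x$ and therefore represents $\widehat\alpha_{(K,\beta)}$ as a cocycle on $\widehat G$. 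Comparing with the formula for $\widehat\alpha$ in the statement shows $\widehat\alpha_{(K,\beta)}=\widehat\alpha$, so by Definition~\ref{definition pontrjagin dual} the gerbe $\langle\widehat G,\widehat\alpha\rangle$ is a fibrewise Pontrjagin dual of $\langle G,\alpha\rangle$; the reverse direction follows by running the same argument with $(S,F)$ and $(\widehat S,\widehat F)$ interchanged, using the canonical isomorphism $\widehat{\widehat S}\cong S$ and the symmetry of the compatibility relation under the pairing $S\times\widehat S\to U(1)$.

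The step I expect to be the main obstacle is not any individual identity but the bookkeeping required to match the concrete continuous cochains $\beta$, $\gamma$, $\eta$ with the abstract constructions living in the quadruple complex $C^{i,j,l,m}$ of \S\ref{section LHSS}: one must keep the bidegrees straight, control the signs in the total differential $d=d_K+(-1)^pd_{K\rtimes G}$ and its refinement, and track the three maps in play---the restriction $r_*$ to $Z^1((pt\rtimes S)_\bullet,\underline{U(1)})\cong\widehat S$, the non-$K$-equivariant splitting $\psi_*$, and the connecting homomorphism $H^2(\widehat G_\bullet,\underline{B^1((K\rtimes G)_\bullet,\underline{U(1)})})\to H^3(\widehat G_\bullet,\underline{U(1)})$---checking at each stage that the cocycle conditions and, crucially, the independence of the auxiliary variable $x$ are preserved, so that everything descends to honest cochains on $\widehat G$.
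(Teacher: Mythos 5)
Your proposal is correct and follows essentially the same route as the paper: the paper's proof of this theorem consists precisely of the explicit cochain computations in the preceding subsection (the formulas for $\beta$, $\gamma$, $\psi_*\widehat{F}'_1$, $\eta$, and the identification $\widehat{\alpha}_{(K,\beta)}=\widehat{\alpha}$), packaged as an application of Theorems \ref{Theorem crossed module} and \ref{Theorem cohomology class} via \cite[Thm.\ 3.6]{Uribe}, exactly as you organize it. Your added remarks (the normalization check for $[\alpha|_S]=0$ and the symmetric run for the reverse direction) are consistent with, though not spelled out in, the paper's argument.
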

 
\begin{example}
Consider $K= \real^2$, $S= \real$ and $G = K \times S = \real^3$. The Pontrjagin dual group $\widehat{S}= \Hom(S,U(1))$ is isomorphic to the reals and we may consider $\alpha \in Z^3_c(\real^3, U(1))$ by the formula $$\alpha((r_1,s_1,t_1),(r_2,s_2,t_2),(r_3,s_3,t_3))=e^{2\pi i r_1s_2t_3}.$$
Applying Proposition \ref{Cor dual gerbes} and Theorem \ref{theorem identity contractible} we see that the fibrewise Pontrjagin dual multiplicative
Segal-Mitchison gerbe is $\langle \mathcal{H}, 0 \rangle $ where $ \mathcal{H}$ is the Heisenberg group of upper triangular $3\times 3$ matrices
with ones on the diagonal and trivial 3-cocycle. In this case 
 $\widehat{F}((r_1,s_1),(r_2,s_2))=r_1s_2$ with $\widehat{F} \in Z^2_c(\real^2, \real)$ and $F=0$. 
We conclude that 
$$\langle \real^3, \alpha \rangle  \ \  \mbox{and} \ \ \langle \mathcal{H},0 \rangle$$ are fibrewise Pontrjagin dual multiplicative Segal-Mitchison gerbes.
\end{example}

\subsection{Finite abelian fiber}

Consider the case on which $S$ is a finite abelian group. In what follows we will provide an explicit construction of the crossed modules associated to the fibrewise Pontrjagin
dual multiplicative Segal-Mitchison $U(1)$-gerbes of Proposition \ref{Cor dual gerbes}.

Let $\widehat{S}:=\Hom(S,U(1))$ be the Pontrjagin dual finite group
 and take $b:S \times S \to U(1)$ a bilinear map inducing an isomorphism of groups $b^\sharp : S \stackrel{\cong}{\to} \widehat{S}$,
 $s\mapsto b(s,-)$.

Consider a topological group $G$ on which $S$ is a central subgroup fitting into the non-trivial group extension $S \to G \to K$.

We are going to define two non-equivalent crossed modules with this information
which are fibrewise Pontrjagin dual. They will have the same structural groups
$N$ and $E$, same action of $E$ on $N$ and same $\pi_1$ but different structural homomorphisms. 
Let us define the two 2-groups first.

Consider the groups $$\GG_0:= S \times G \ \ \mbox{and} \ \ \GG_1:= (S \times G \times S) \ltimes U(1)$$ where the product in $\GG_1$ is given by the formula
$$(s_1,g_1,\overline{s}_1,\lambda_1)\cdot (s_2,g_2,\overline{s}_2,\lambda_2) = (s_1s_2,g_1g_2,\overline{s}_1\overline{s}_2,\lambda_1+  \lambda_2 + b(\overline{s}_1, s_2)).$$
In both cases the underlying groupoid comes from the action of $S \times U(1)$ on $S\times G$. In one case
we let $S \times U(1)$ act on $S \times G$ by multiplication on the first coordinate, and in the other case
by multiplication on the second coordinate.
In both cases the source map is the same $\source(s,g, \overline{s},\lambda)=(s,g)$ and the identity map also
$\ident(s,g)=(s,g,0,0)$. Therefore the induced right action of $S\times G$ on $S \times U(1)$
is given by the following formula
$$(\overline{s},\lambda)^{(s,g)}=(\overline{s},\lambda + b(\overline{s},s)).$$
The target maps are defined as follows
$$\target_1(s,g, \overline{s},\lambda)= (s\overline{s},g), \ \ \ \ \target_2(s,g, \overline{s},\lambda)= (s, g\overline{s})$$
and therefore they define crossed modules $$S \times U(1) \stackrel{\target_1}{\To} S \times G  \ \ \mbox{and} \ \ S \times U(1) \stackrel{\target_2}{\To} S \times G$$ such that in both cases $\Ker(\target_1) \cong U(1) \cong \Ker(\target_2)$.

Nevertheless the crossed modules are non-equivalent since $\Coker(\target_1)= G$ and $\Coker(\target_2)=S \times K $.
Moreover, the cohomology class $[\alpha] \in H^3((S\times K)_\bullet, \underline{U(1)})$ that induces the second crossed module is not trivial.

These two crossed modules defined above are the fibrewise Pontrjagin dual multiplicative Segal-Mitchison gerbes of Proposition \ref{Cor dual gerbes}.

\begin{example} Consider $G=SU(2)$ and $S= \integer/2 \subset G$ its center. Then $K=SO(3)$ and we may take the bilinear
form $b : \integer/2 \times \integer/2 \to U(1)$, $(x,y) \mapsto \frac{xy}{2}$ with $x,y \in \{0,1\}$ and $U(1)=\real/\integer$.
The cohomology class of the crossed module that induces the four term sequence
$$U(1) \to \integer/2 \times U(1) \To \integer/2 \times SU(2) \To \integer/2 \times SO(3)$$
is the only non-trivial class that comes from the 
map
$$H^1(H^2(SO(3)_\bullet, \underline{\integer/2}),U(1)) \to H^3((\integer/2 \times SO(3))_\bullet,
\underline{U(1)}).$$

The two crossed modules previously defined induce multiplicative Segal-Mitchison $U(1)$-gerbes. On the one hand we have the group $SU(2)$ with trivial cohomology class and in the other hand we have the group $\integer/2 \times SO(3)$ with the cohomology class in $H^3((\integer/2 \times SO(3))_\bullet,
\underline{U(1)})$ defined by composition of the Stiefel-Whitney class $w_2 \in H^2(BSO(3), \integer/2)$ with the non-trivial homomorphism in $H^1((\integer/2)_\bullet,
\underline{U(1)})$.
\end{example}

\subsection{Finite groups}
Multiplicative Segal-Mitchison gerbes over finite groups and pointed fusion categories are very related entities.  A multiplicative Segal-Mitchison gerbe $\langle G, \alpha \rangle $ with $G$ a finite group and $\alpha \in Z^3(G,U(1))$ encodes the information of a pointed fusion category. A pointed fusion category is a rigid tensor category with finitely many isomorphism classes of simple objects which moreover are invertible \cite{ENO}.
The relevant pointed fusion categories are $\Vect(G,\alpha)$ of $G$-graded complex vector spaces with associativity constraint $\alpha$. 
Module categories over the pointed fusion categories $\Vect(G,\alpha)$ play the role of representations of the multiplicative Segal-Mitchison gerbe 
$\langle G, \alpha \rangle $ and vice versa.

The realm of fusion categories and their module categories has been extensively studied \cite{ENO, Ost, Ost-2, MugerI, MugerII, Naidu} and there are many properties and constructions
which do not necessarily generalize to the continuous case. Nevertheless, in the finite case we can compare our Theorem \ref{theorem identity contractible} with Theorem 3.9 in \cite{Uribe} and we can conclude  that Morita equivalent
pointed fusion categories (namely such that their module categories are equivalent) induce fibrewise Pontrjagin dual multiplicative Segal-Mitchison gerbes and that pointed fusion categories of fibrewise Pontrjagin dual multiplicative Segal-Mitchison gerbes are Morita equivalent. Therefore we can conclude that fibrewise Pontrjagin dual multiplicative Segal-Mitchison 
gerbes over finite groups have equivalent categories of representations. The generalization to multiplicative Segal-Mitchison gerbes over topological groups will be approached in the next section.

Concrete examples of fibrewise Pontrjagin dual multiplicative Segal-Mitchison gerbes can thus be obtained by looking at the examples that have been constructed
of Morita equivalent pointed fusion categories. A complete classification of all fibrewise Pontrjagin dual Segal-Mitchison gerbes
over groups of dimension 8  appears in \cite{MunozUribe}, over groups of order $p^3$ for $p$ odd prime  in \cite{MayaUribe} and over groups of order less than 32  in \cite{MignardSchauenburg}. Several other examples appear in \cite{Naidu, MugerI, Ost, Ost-2} and the references therein.

\section{Morita equivalence for fibrewise Pontrjagin dual multiplicative Segal-Mitchison gerbes} \label{section Morita}
In this section we will focus on the relation between the categories of representations of fibrewise Pontrjagin dual multiplicative Segal-Mitchison gerbes.
For this let us take a multiplicative Segal-Mitchison $U(1)$-gerbe $\langle G, \alpha \rangle$ and a its representation $(K,\beta)$
as it appears in Theorem \ref{Theorem crossed module}. In Theorem \ref{Theorem cohomology class} we have shown that
the 2-group of endomorphisms $\End_{\langle G, \alpha \rangle^{op}}(K,\beta)$, which is also a group object in lifting bundle $A$-gerbes, has $\widehat{G}$ for its underlying group and its classifying
class is $[\widehat{\alpha}_{(K,\beta)}] \in H^3(\widehat{G}_\bullet,\underline{U(1)})$. The multiplicative
Segal-Mitchison $U(1)$-gerbe $\langle \widehat{G}, \widehat{\alpha}_{(K,\beta)} \rangle$ was called a fibrewise Pontrjagin dual of $\langle G, \alpha \rangle$
in Definition \ref{definition pontrjagin dual}.

In order to define the functors relating the categories of representations of the Segal-Mitchison $U(1)$-gerbes we need to recall  first the properties
of the groupoids of morphisms of two representations from \S \ref{category of morphisms of representations}. 
Let us consider two right representations $(V, \chi)$ and $(W,\nu)$ of the Segal-Mitchison $U(1)$-gerbe $\langle G, \alpha \rangle$. The groupoid
of morphisms
$$\Hom_{\langle G, \alpha \rangle^{op}}((V, \chi),(W,\nu)) = [\Hom^0_{\langle G, \alpha \rangle^{op}}((V, \chi),(W,\nu)) / C^0((V \rtimes G  )_\bullet, \underline{U(1)}) ]$$
is an action groupoid where the group $C^0((V \rtimes G  )_\bullet, \underline{U(1)})$ acts on the space of objects $\Hom^0_{\langle G, \alpha \rangle^{op}}((V, \chi),(W,\nu))$ by the image of the differential $d_{V \rtimes G}$.
The kernel of the differential is $H^0((V \rtimes G  )_\bullet, \underline{U(1)}) \cong \Map(V,U(1))^G$, the space of $G$-invariant maps from $V$ to $U(1)$, and therefore the groupoid of morphisms becomes a Lifting bundle gerbe with structure group $\Map(V,U(1))^G$ as it described in  \S \ref{Appendix B} Appendix B.

The endomorphism 2-groups $\End_{\langle G, \alpha \rangle^{op}}(G, \overline{\alpha})$ and its Pontrjagin dual $\End_{\langle G, \alpha \rangle^{op}}(K, \beta)$
defined in Example \ref{canonical representation} and Theorem \ref{Theorem crossed module} respectively, are both
also group objects in Lifting bundle $U(1)$-gerbes. As such, they act on Lifting bundle gerbes and we will denote by
$$\Rep_{\End_{\langle G, \alpha \rangle^{op}}(G, \overline{\alpha})} \ \ \mbox{and} \ \ \Rep_{\End_{\langle G, \alpha \rangle^{op}}(K, \beta)}$$ their 2-categories
of representations.

The canonical covariant functor
\begin{align*}
\Rep_{\langle G, \alpha \rangle^{op}} & \to \Rep_{\End_{\langle G, \alpha \rangle^{op}}(G, \overline{\alpha})^{op}} \\
(V, \chi) & \mapsto \Hom_{\langle G, \alpha \rangle^{op}}((G, \overline{\alpha}),(V,\chi))
\end{align*}
induces an equivalence of 2-categories from the category of right representations of the Segal-Mitchison $U(1)$-gerbe $\langle G, \alpha \rangle$ to the 2-category
of Lifting bundle $U(1)$-gerbes right representations of $\End_{\langle G, \alpha \rangle}(G, \overline{\alpha})$. The endomorphisms
act on the right by pre-composition of morphisms.

There is also the contravariant functor
\begin{align*}
\Rep_{\langle G, \alpha \rangle^{op}} & \to \Rep_{\End_{\langle G, \alpha \rangle}(K,\beta)} \\
(V, \chi) & \mapsto \Hom_{\langle G, \alpha \rangle^{op}}((V,\chi),(K, \beta))
\end{align*}
that takes a right representation of the Segal-Mitchison $U(1)$-gerbe $\langle G, \alpha \rangle$ to a left representation of the Pontrjagin dual group lifting bundle $U(1)$-gerbe $\End_{\langle G, \alpha \rangle}(K,\beta)$. The groupoid 
$\Hom_{\langle G, \alpha \rangle^{op}}((V,\chi),(K, \beta))$ is a lifting bundle $H^0(V,\underline{U(1)})^G$-gerbe and it becomes a left representation by composition of morphisms. Now, if we would want the image of the functor to produce only 
lifting bundle $U(1)$-gerbes, we would need to restrict to $G$-spaces $V$ which are homogeneous spaces themselves. In this way we would have
that $H^0(V,\underline{U(1)})^G=U(1)$.

The Lifting bundle $U(1)$-gerbe
$$\Hom_{\langle G, \alpha \rangle^{op}}((G, \overline{\alpha}),(K,\beta))$$
is a right representation of $\End_{\langle G, \alpha \rangle^{op}}(G, \overline{\alpha})$ and is a left representation of 
$\End_{\langle G, \alpha \rangle^{op}}(K, \beta)$. This lifting bundle $U(1)$-gerbe is the bi-representation that
permits to relate  $\End_{\langle G, \alpha \rangle^{op}}(G, \overline{\alpha})$ with $\End_{\langle G, \alpha \rangle^{op}}(K, \beta)$;
let us see more closely its structure.

\begin{proposition} \label{proposition birepresentation}
The representation $\Hom_{\langle G, \alpha \rangle^{op}}((G, \overline{\alpha}),(K,\beta))$ of  $\End_{\langle G, \alpha \rangle^{op}}(K, \beta)$
defines a left representation $(K, \widehat{\beta})$ of the multiplicative Segal-Mitchison $U(1)$-gerbe $\langle \widehat{ G}, \widehat{\alpha} \rangle$.
\end{proposition}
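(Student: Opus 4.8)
The plan is to identify the groupoid $\Hom_{\langle G, \alpha\rangle^{op}}((G,\overline\alpha),(K,\beta))$ explicitly and to check that, as a left $\End_{\langle G,\alpha\rangle^{op}}(K,\beta)$-module object in lifting bundle $U(1)$-gerbes, it is precisely a representation of $\langle\widehat G,\widehat\alpha\rangle$ on the $\widehat G$-space $K$. First I would recall from \S\ref{category of morphisms of representations} that a morphism $(G,\overline\alpha)\to (K,\beta)$ consists of a right $G$-equivariant map $F\colon G\to K$ together with $\gamma\in C^1((G\rtimes G)_\bullet,\underline{U(1)})$ solving $\overline\alpha - F^*\beta = d_{G\rtimes G}\gamma$, and that by the isomorphism $\Map_{G^{op}}(G,K)\cong K$, $F\mapsto F(1)$, the underlying space of morphisms maps onto $K$. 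So the groupoid $\Hom_{\langle G, \alpha\rangle^{op}}((G,\overline\alpha),(K,\beta))$ has $K$ for its ``space of isomorphism classes of objects'' and, fibrewise over each $k\in K$, a torsor-like structure governed by $Z^1((G\rtimes G)_\bullet,\underline{U(1)})$ modulo $d_{G\rtimes G}C^0$, whose kernel is $H^0((G\rtimes G)_\bullet,\underline{U(1)})=\Map(G,U(1))^G=U(1)$. This already exhibits it as a lifting bundle $U(1)$-gerbe over the space $K$.

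Next I would make the two actions explicit. The right action of $\End_{\langle G,\alpha\rangle^{op}}(G,\overline\alpha)$ is pre-composition; since that 2-group encodes $\langle G,\alpha\rangle$ (Example \ref{canonical representation}), this recovers the right $G$-action on $K$ coming from $G\to K$, i.e. $K$ with its canonical right $G$-action, whence a right $\langle G,\alpha\rangle$-representation structure — but since we only care about the left $\widehat G$-side I would just use it to fix the base $K$. For the left action of $\End_{\langle G,\alpha\rangle^{op}}(K,\beta)$, recall from Theorem \ref{Theorem crossed module} that this 2-group sits in the four-term sequence with cokernel $\widehat G$ and kernel $U(1)$, and its objects project onto $K=\Map_{G^{op}}(K,K)$ via $f_h\mapsto h$. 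Post-composing a morphism $(F,\gamma)\colon(G,\overline\alpha)\to(K,\beta)$ with an endomorphism $(f_h,\gamma_h)$ of $(K,\beta)$ sends $F(1)=k$ to $f_h(k)=hk$, so on the base we get the left multiplication action of $K$ — which lifts to a left $\widehat G$-action on the lifting bundle gerbe precisely because the $\gamma_h$'s (coming from the $\gamma\in D^{1,1}$ produced by $[\alpha]\in\Omega(G,S)$) assemble the cocycle $\widehat F\in Z^2(K_\bullet,\underline{\widehat S})$ defining $\widehat G\to K$. Thus the underlying $\widehat G$-space is $K$, with $\widehat G$ acting through $\widehat p\colon\widehat G\to K$ by left translation.

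Finally I would produce the 2-cochain $\widehat\beta\in C^2((K\rtimes\widehat G)_\bullet,\underline{U(1)})$ and verify $d_{K\rtimes\widehat G}\widehat\beta=\widehat\pi^*\widehat\alpha$. The cochain $\widehat\beta$ is read off from the lifting-bundle-gerbe data of $\Hom_{\langle G,\alpha\rangle^{op}}((G,\overline\alpha),(K,\beta))$ together with the chosen lift $\psi_*\widehat F_1'\colon\widehat G\to EZ^1((K\rtimes G)_\bullet,\underline{U(1)})$ used in the construction of $\widehat\alpha_{(K,\beta)}$ right before Theorem \ref{Theorem cohomology class}; concretely $\widehat\beta$ is the obstruction 2-cochain measuring how the $\widehat G$-action fails to be strict, exactly as $\beta$ did for $\alpha$ in \eqref{eqn beta}. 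Its structural equation $d_{K\rtimes\widehat G}\widehat\beta=\widehat\pi^*\widehat\alpha_{(K,\beta)}$ then follows by unwinding the definition $[\widehat\alpha_{(K,\beta)}]$ as the image of $[d_{\widehat G}(\psi_*\widehat F_1')-\widehat p^*(d_K\gamma)]$ under the connecting map for $U(1)\to C^0((K\rtimes G)_\bullet,\underline{U(1)})\to B^1((K\rtimes G)_\bullet,\underline{U(1)})$ — the pullback $\widehat\pi^*$ to the action groupoid $K\rtimes\widehat G$ trivializes the $B^1$-part, leaving precisely $d_{K\rtimes\widehat G}$ applied to the resulting $U(1)$-valued cochain $\widehat\beta$. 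I expect the main obstacle to be purely bookkeeping: carefully tracking the four nested simplicial directions of the quadruple complex $C^{i,j,l,m}$ through the restriction $Z^1((K\rtimes G)_\bullet,\underline{U(1)})\to\widehat S$ and the non-$K$-equivariant splitting $\psi$, so that the cocycle one obtains on $(K\rtimes\widehat G)_\bullet$ genuinely matches the $\widehat\alpha_{(K,\beta)}$ of Theorem \ref{Theorem cohomology class} rather than a cohomologous but differently-presented representative; all the analytic input (softness, the isomorphisms $H^*((K\rtimes G)_\bullet,\underline{U(1)})\cong H^*(S_\bullet,\underline{U(1)})$, Pontrjagin duality for $S$) is already available from earlier sections.
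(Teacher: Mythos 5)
Your proposal is correct and follows essentially the same route as the paper's proof: identify $\Hom_{\langle G,\alpha\rangle^{op}}((G,\overline\alpha),(K,\beta))$ as a lifting bundle $U(1)$-gerbe whose four-term sequence surjects onto $K$ (using the $\gamma$ guaranteed by $[\alpha]\in\Omega(G,S)$ and the acyclicity of $C^*(G\rtimes G,\underline{U(1)})$), observe that post-composition by $\End_{\langle G,\alpha\rangle^{op}}(K,\beta)$ induces the left-translation action of $\widehat G$ on $K$ through $\widehat p$, and conclude the existence of $\widehat\beta$ with $d\widehat\beta=\widehat\pi^*\widehat\alpha$. If anything, your third paragraph spells out the construction of $\widehat\beta$ from $\psi_*\widehat F_1'$ in more detail than the paper, which at that point simply asserts that the action forces such a $\widehat\beta$ to exist.
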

\begin{proof}
Recall from \eqref{eqn beta} and \eqref{eqn overline alpha} that 
\begin{align*}
\beta \in C^2(K \rtimes G, \underline{U(1)}),  & &  d_{K \rtimes G} \beta = \pi^*\alpha\\
\overline{\alpha} \in C^2(G \rtimes G, \underline{U(1)}), & &d_{G \rtimes G} \overline{\alpha}= \overline{\pi}^*\alpha
\end{align*}
 for $\pi : K \to *$ and $\overline{\pi} : G \to *$ respectively. Denote by $p: G \to K$ the projection homomorphism $g \mapsto [g]$
and recall from  \eqref{eqn map(K,K)} the notation $f_h \in \Map_{G^{op}}(K,K)$, $f_h(k)=hk$ for $h, k \in K$. Then we have the isomorphism
\begin{align*}
\Map_{G^{op}}(G,K) \cong K \ \ \ 
f_h \circ p  \mapsto h. 
\end{align*}

From the proof of Theorem \ref{Theorem crossed module}  and \eqref{Theorem crossed module} we know that there exists $\gamma$ such
$\beta - f_h^*\beta = (d_{K \rtimes G} \gamma)(h)$ for all $h \in K$, and therefore 
$$d_{G \rtimes G}( \overline{\alpha} - p^*f_h^* \beta )=0$$
for all $h \in K$. Since the complex $C^*(G \rtimes G, \underline{U(1)})$ is acyclic we see that the four term sequence
for the lifting bundle $U(1)$-gerbe $\Hom_{\langle G, \alpha \rangle^{op}}((G, \overline{\alpha}),(K,\beta))$ is simply
$$U(1) \To C^0(G \rtimes G, \underline{U(1)}) \circlearrowright \Hom^0_{\langle G, \alpha \rangle^{op}}((G, \overline{\alpha}),(K,\beta)) \To K.$$

The action of $\End_{\langle G, \alpha \rangle^{op}}(K, \beta)$ on $\Hom_{\langle G, \alpha \rangle^{op}}((G, \overline{\alpha}),(K,\beta))$ induces a left action of $\widehat{G}$ on $K$ by left multiplication. Moreover, this action implies that the Segal-Mitchison $U(1)$-gerbe
associated to $\Hom_{\langle G, \alpha \rangle^{op}}((G, \overline{\alpha}),(K,\beta))$ can be made into a left representation
of the multiplicative Segal-Mitchison $U(1)$-gerbe $\langle \widehat{ G}, \widehat{\alpha} \rangle$. Therefore the must exist
$$ \widehat{\beta} \in C^2(\widehat{G} \ltimes K, \underline{(U(1)}), \ \ \mbox{with} \ \ 
d_{\widehat{G} \ltimes K} \widehat{\beta} = \widehat{\pi}^* \widehat{\alpha}$$
where $\widehat{\pi} :\widehat{G} \to K$, $\widehat{g} \mapsto [\widehat{g}]$. 

Hence $(K, \widehat{\beta})$ is the desired left representation of $\langle \widehat{ G}, \widehat{\alpha} \rangle$ induced by 
$$\Hom_{\langle G, \alpha \rangle^{op}}((G, \overline{\alpha}),(K,\beta)).$$
\end{proof}

Now we will argue that the representation $(K, \widehat{\beta})$ satisfies the hypothesis of Theorem \ref{Theorem crossed module}
thus implying that $[\widehat{\alpha}] \in \Omega(\widehat{G},\widehat{S})$. For this to happen we just need to verify that
the projection map
$$
\End_{\langle \widehat{ G}, \widehat{\alpha} \rangle}(K, \widehat{\beta}) \to \Map_{\widehat{G}}(K,K)
$$
is surjective (see \eqref{eqn projection EndG(K,K)} and the argument of Theorem \ref{Theorem crossed module}).
This surjectivity is equivalent to the surjectivity of the projection map
$$
\End_{\End_{\langle G, \alpha \rangle^{op}}(K, \beta)} \left( \Hom_{\langle G, \alpha \rangle^{op}}((G, \overline{\alpha}),(K,\beta)) \right) \to
\End_{\End_{G^{op}}(K)} \left( \Hom_{G^{op}}(G, K) \right)
$$
induced by projecting on the components of the endomorphisms given by the maps among spaces. 

This surjectivity is part from a bigger result that relates the multiplicative Segal-Mitchison $U(1)$-gerbe
$\langle G , \alpha \rangle$ with its double dual constructed from the representation $(K, \beta)$. The setup is the following.

Since left and right compositions commute, the composition on the right by elements in $\End_{\langle G, \alpha \rangle^{op}}(G, \overline{\alpha})$
induce endomorphisms of $\Hom_{\langle G, \alpha \rangle^{op}}((G, \overline{\alpha}),(K,\beta))$ as left 
$\End_{\langle G, \alpha \rangle^{op}}(K, \beta)$ representations. Therefore we obtain the right composition functor
\begin{align}
\Psi: \End_{\langle G, \alpha \rangle^{op}}(G, \overline{\alpha}) & \to \End_{\End_{\langle G, \alpha \rangle^{op}}(K, \beta)} \left( \Hom_{\langle G, \alpha \rangle^{op}}((G, \overline{\alpha}),(K,\beta)) \right) \nonumber \\
\mathcal{F} & \mapsto \Psi(\mathcal{F}) \ \ ( \Psi(\mathcal{F}) : \mathcal{Q} \mapsto \mathcal{Q} \circ \mathcal{F}) \label{functor Psi}
\end{align}
with $\mathcal{T} \circ \Psi(\mathcal{F})(\mathcal{Q}) =  \mathcal{T} \circ  \mathcal{Q} \circ \mathcal{F}= \Psi(\mathcal{F})(\mathcal{T} \circ \mathcal{Q})$ for all $\mathcal{T} \in \End_{\langle G, \alpha \rangle^{op}}(K, \beta)$. We claim the following result.

\begin{theorem} \label{theorem double dual}
Let $\langle G , \alpha \rangle$ be a multiplicative Segal-Mitchison $U(1)$-gerbe with $[\alpha] \in \Omega(G,S)$ and $(K, \beta)$ a representation as in Theorem \ref{Theorem crossed module}. Then the canonical right composition functor
\begin{align*}
\Psi :\End_{\langle G, \alpha \rangle^{op}}(G, \overline{\alpha}) & \to \End_{\End_{\langle G, \alpha \rangle^{op}}(K, \beta)} \left( \Hom_{\langle G, \alpha \rangle^{op}}((G, \overline{\alpha}),(K,\beta)) \right) 
\end{align*}
from $\End_{\langle G, \alpha \rangle^{op}}(G, \overline{\alpha})$ to its double dual is an equivalence of 2-groups.
\end{theorem}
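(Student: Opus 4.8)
The plan is to show that $\Psi$ is an equivalence of $2$-groups by checking that it induces isomorphisms on $\pi_0$ and on $\pi_1$, which for $2$-groups (connected $2$-groupoids) is the same as being fully faithful and essentially surjective. First I would recall, from Example~\ref{canonical representation}, that the source $\End_{\langle G,\alpha\rangle^{op}}(G,\overline{\alpha})$ has four-term exact sequence
$$U(1)\To C^0((G\rtimes G)_\bullet,\underline{U(1)})\xrightarrow{d_{G\rtimes G}}\End^0_{\langle G,\alpha\rangle^{op}}(G,\overline{\alpha})\To G,$$
so that $\pi_1=U(1)$ and $\pi_0=G$. For the target I would first identify it: by Proposition~\ref{proposition birepresentation} the bi-representation $\Hom_{\langle G,\alpha\rangle^{op}}((G,\overline{\alpha}),(K,\beta))$, regarded as a left $\End_{\langle G,\alpha\rangle^{op}}(K,\beta)$-module, is the same datum as the left $\langle\widehat{G},\widehat{\alpha}\rangle$-representation $(K,\widehat{\beta})$, whence its endomorphism $2$-group is $\End_{\langle\widehat{G},\widehat{\alpha}\rangle}(K,\widehat{\beta})$. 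Running the analysis preceding Theorem~\ref{Theorem crossed module} for the central extension $\widehat{S}\to\widehat{G}\to K$, this $2$-group has four-term sequence $U(1)\To C^0((\widehat{G}\ltimes K)_\bullet,\underline{U(1)})\To\End^0_{\langle\widehat{G},\widehat{\alpha}\rangle}(K,\widehat{\beta})\To\widehat{\widehat{G}}$, with $\pi_1=U(1)$ because $K$ is a homogeneous $\widehat{G}$-space so $H^0((\widehat{G}\ltimes K)_\bullet,\underline{U(1)})\cong\Map(K,U(1))^{\widehat{G}}=U(1)$, and with $\pi_0=\widehat{\widehat{G}}$ projecting onto $\Map_{\widehat{G}}(K,K)\cong K$ with fibre over the identity $H^1((\widehat{G}\ltimes K)_\bullet,\underline{U(1)})\cong\widehat{\widehat{S}}$.

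Next I would analyse $\Psi$ as a morphism of group objects in lifting bundle $U(1)$-gerbes, hence a ladder between these two four-term sequences. On $\pi_1$ the induced map is the identity of $U(1)$: right precomposition with an endomorphism of $(G,\overline{\alpha})$ fixes the automorphisms of the identity morphism. On $\pi_0$, using the identifications $\Map_{G^{op}}(G,K)\cong K$, $f_h\circ p\mapsto h$, and $\Map_{\widehat{G}}(K,K)\cong K$ (right translations), a direct unwinding shows that if $\mathcal{F}\in\End_{\langle G,\alpha\rangle^{op}}(G,\overline{\alpha})$ lies over $g\in G$ then $\Psi(\mathcal{F})$ lies over the right translation by $p(g)\in K$; so the induced map $G\to\widehat{\widehat{G}}$ covers the projection $p\colon G\to K$.

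The crux is then the fibrewise behaviour over $1\in K$. Restricting $\Psi$ to the fibre $S=p^{-1}(1)\subset G$, one computes that a central $s\in S$ is carried to the automorphism of $(K,\widehat{\beta})$ whose class in $H^1((\widehat{G}\ltimes K)_\bullet,\underline{U(1)})\cong H^1((\ast\rtimes\widehat{S})_\bullet,\underline{U(1)})\cong\widehat{\widehat{S}}$ is exactly the evaluation character $\rho\mapsto\rho(s)$; this is where the cocycle $E=(E_0,E_1)\in Z^1((K\rtimes G)_\bullet,\underline{S})$ and the splitting $\psi\colon\widehat{S}\to Z^1((K\rtimes G)_\bullet,\underline{U(1)})$ of the restriction map, constructed before Theorem~\ref{Theorem cohomology class}, enter, producing precisely the duality pairing $S\times\widehat{S}\to U(1)$. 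Thus on fibres $\Psi$ is the canonical Pontrjagin biduality homomorphism $S\to\widehat{\widehat{S}}$, which is an isomorphism since $S$ is locally compact \cite[p.~377]{Pontrjagin}. Combining this with the surjectivity of $p\colon G\to K$ and the homogeneity of $K$ (which transports the fibrewise statement over $1$ to every point of $K$), a short five lemma / diagram chase yields that $\Psi$ induces an isomorphism $G\xrightarrow{\cong}\widehat{\widehat{G}}$ on $\pi_0$.

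Having isomorphisms on $\pi_0$ and on $\pi_1$, the morphism $\Psi$ of group objects in lifting bundle $U(1)$-gerbes is a weak equivalence, hence an equivalence of $2$-groups, proving the theorem. As a byproduct, surjectivity of $G\to\widehat{\widehat{G}}\to K$ gives surjectivity of $\End_{\langle\widehat{G},\widehat{\alpha}\rangle}(K,\widehat{\beta})\to\Map_{\widehat{G}}(K,K)$, which is exactly what is needed to conclude that $(K,\widehat{\beta})$ satisfies the hypothesis of Theorem~\ref{Theorem crossed module} and in particular that $[\widehat{\alpha}]\in\Omega(\widehat{G},\widehat{S})$. I expect the main obstacle to be the fibrewise computation in the third paragraph—identifying the map $\Psi$ induces on $S$ with the Pontrjagin evaluation $s\mapsto\mathrm{ev}_s$—together with keeping the left/right ($^{op}$) bookkeeping consistent and carefully verifying that the endomorphisms of the bi-representation as an $\End_{\langle G,\alpha\rangle^{op}}(K,\beta)$-module are indeed computed by $\End_{\langle\widehat{G},\widehat{\alpha}\rangle}(K,\widehat{\beta})$.
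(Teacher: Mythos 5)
Your proposal is correct and follows essentially the same route as the paper's proof: you establish that $\Psi$ covers the projection to $K$ via the explicit computation $f_h\circ p\circ F_g=f_{hp(g)}\circ p$ (giving surjectivity of the projection and hence that the double dual fits into $\widehat{\widehat{S}}\to\widehat{\widehat{G}}\to K$), then identify the restriction of $\Psi$ to the fibre $S$ with the Pontrjagin biduality isomorphism $S\cong\widehat{\widehat{S}}$, and conclude by the $\pi_0$/$\pi_1$ criterion for equivalences of 2-groups. The only cosmetic difference is that you reroute the target through $\End_{\langle \widehat{G},\widehat{\alpha}\rangle}(K,\widehat{\beta})$ at the outset, whereas the paper works directly with the double-dual endomorphism 2-group and records that identification only as a consequence of the theorem.
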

\begin{proof}
The outline of the proof is the following. First we will show that the projection map
\begin{align} \label{projection map}
\End_{\End_{\langle G, \alpha \rangle^{op}}(K, \beta)} \left( \Hom_{\langle G, \alpha \rangle^{op}}((G, \overline{\alpha}),(K,\beta)) \right) \to
\End_{\End_{G^{op}}(K)} \left( \Hom_{G^{op}}(G, K) \right)
\end{align}
is surjective. The surjectivity implies that $[\widehat{\alpha}] \in \Omega(\widehat{G}, \widehat{S})$ and by Theorem \ref{Theorem crossed module} we obtain that the isomorphism
classes of objects  $$\widehat{ \widehat{G}} = \pi_0\left(\End_{\End_{\langle G, \alpha \rangle^{op}}(K, \beta)} \left( \Hom_{\langle G, \alpha \rangle^{op}}((G, \overline{\alpha}),(K,\beta)) \right) \right)$$
 fits into the short exact sequence of groups
$\widehat{\widehat{S}} \To \widehat{\widehat{G}} \To K$.  This implies also that 
the  induced homomorphism of groups $\Psi: G \to \widehat{\widehat{G}}$ factors through the projection homomorphisms to $K$
$$
\xymatrix{
G \ar[rr]^\Psi \ar[dr]^p  && \ar[ld]_{\widehat{p}} \widehat{\widehat{G}} \\
&K.&
}
$$
Second we will show that the homomorphism, once restricted to the subgroup $S \subset G$, gives
the fiber $\widehat{\widehat{S}}$ via the Pontrjagin dual isomorphism $S \stackrel{\cong}{\to} \widehat{\widehat{S}}$. The two facts together imply that $\Psi$ induces an isomorphism of groups $G \cong \widehat{\widehat{G}}$, and together with the fact that $\Psi$ is a functor of group lifting bundle $U(1)$-gerbes, we have that $\Psi$ induces an equivalence of 2-groups.

Let us start with the surjectivity of the projection map of \eqref{projection map}. Let $(F_g, \eta_g) \in
\End^0_{\langle G, \alpha \rangle^{op}}(G, \overline{\alpha})$ with $F_g : G \to G $, $F_g(g') = gg'$ for all $g, g' \in G$,
and $\overline{\alpha} - F_g^*\overline{\alpha} = d_{G \rtimes G} \eta_g$. Every morphism in 
$\Hom^0_{\langle G, \alpha \rangle^{op}}((G, \overline{\alpha}),(K,\beta))  $ is of the form $(f_h \circ p, \epsilon)$
with $f_h : K \to K $, $f_h(k)=hk$ for all $h,k \in K$, $p:G \to K$ the projection homomorphism and
$\overline{\alpha} - p^* f_h^* \beta = d_{G \rtimes G} \epsilon$.

The right composition functor $\Psi$ of \eqref{functor Psi} applied to $(F_g, \eta_g)$ acts on $(f_h \circ p, \epsilon)$
by right composition as follows:
$$\left(\Psi(F_g, \eta_g) \right) (f_h \circ p, \epsilon) = (f_h \circ p \circ F_g, \eta_g +F_g^* \epsilon).$$

The composition map $f_h \circ p \circ F_g$ is equal to the map $f_{hp(g)} \circ p$. Therefore, the projection
map of \eqref{projection map} applied to $\Psi(F_g, \eta_g)$, is equivalent to the assignment
$f_h \circ p \mapsto f_{hp(g)} \circ p$ for all $ f_h \circ p \in \Hom_{G^{op}}(G,K)$ with $h \in K$. Since we know that
$$K \cong \End_{\End_{G^{op}}(K)} \left( \Hom_{G^{op}}(G, K) \right), \ \ l \mapsto ( f_h \circ p \mapsto f_{hl} \circ p)$$
we see that the projection map of \eqref{projection map} is surjective.

Now let us understand the image under $\Psi$ of  the endomorphism  functors $(F_s, \eta_s) \in \End^0_{\langle G, \alpha \rangle^{op}}(G, \overline{\alpha})$ whenever $s \in S \subset G$. Note that we can take a continuous homomorphism
$S \to \End^0_{\langle G, \alpha \rangle^{op}}(G, \overline{\alpha})$, $s \mapsto (F_s, \eta_s)$ since we know
that $[\alpha|_S]=0$. Hence $S$ acts on the short exact sequence
\begin{align}
U(1) \To C^0(G \rtimes G , \underline{U(1)}) \To Z^1(G \rtimes G , \underline{U(1)}) \label{short exact sequence GxG}
\end{align}
via the map $F_s^*$, and the relevant part of $\End_{\langle G, \alpha \rangle^{op}}(K, \beta)$ acts 
via the pullback of the complex
\begin{align}
U(1) \To C^0(K \rtimes G , \underline{U(1)}) \To Z^1(K \rtimes G , \underline{U(1)}); \label{complex of KxG}
\end{align}
both actions commute. The maps $F_s^*$ act on the complex $ C^*(G \rtimes G , \underline{U(1)}) $
via the pullback of the left translation action $F_s : G \rtimes G \to G \rtimes G$, $F_s(g,h)=(sg,h)$ for all $g,h \in G$.
The map $F_s^*$ provides an external automorphism of the short exact sequence of \eqref{short exact sequence GxG}
which commutes with the action of the complex in \eqref{complex of KxG}. Therefore the endomorphism
functors $\Psi(F_s, \eta_s)$, once projected to their isomorphism classes, generate the group $S \cong \widehat{\widehat{S}} \subset \widehat{\widehat{G}}$. Hence we have that the functor $\Psi$ induces an equivalence of groups in lifting bundle $U(1)$-gerbes as desired.
\end{proof}
\subsection{Morita equivalence}
Let us now restrict our representation category $\Rep_{\langle G , \alpha \rangle}$ to objects $(V, \chi)$ such that
$$H^0(V \rtimes G , \underline{U(1)})= U(1);$$
these are $G$-spaces $V$ on which the action of $G$ is transitive.
Denote this subcategory by $\Rep^+_{\langle G , \alpha \rangle}$. 
We carry out these restrictions to ensure that the category of morphisms
$$\Hom_{\langle G , \alpha \rangle^{op}}\left( (V , \chi),(K ,\beta) \right)$$
is a lifting bundle $U(1)$-gerbe for all $(V ,\chi) \in \Rep^+_{\langle G , \alpha \rangle}$.

Denote by
\begin{align*}
\CC : =& \End_{\langle G , \alpha \rangle^{op}}(G, \overline{\alpha}),\\
\DD := & \End_{\langle G , \alpha \rangle^{op}}(K,\beta),\\
\FF := & \Hom_{\langle G , \alpha \rangle^{op}}\left((G, \overline{\alpha})(K, \beta)\right),
\end{align*}
note that Theorem \ref{theorem double dual} claims that
$$\Psi : \CC \to \End_{\DD}(\FF)$$
is an equivalence of 2-groups, and 
 denote also by $\Rep^+$ the restriction of  the categories of representations of the endomorphism groups in lifting bundle gerbes
where the induced action of the groups $G$ and $\widehat{G}$ on the underlying spaces of the lifting bundle gerbes is transitive. 

Consider the covariant functor
\begin{align*} 
\Rep^+_{\langle G , \alpha \rangle^{op}} \to \Rep^+_{\CC^{op}}, \ \ (V,\chi) \mapsto \Hom_{\langle G , \alpha \rangle^{op}}\left((G,\overline{\alpha})(V,\chi)\right)
\end{align*} and  the contravariant functors 
\begin{align} 
\Rep^+_{\langle G , \alpha \rangle^{op}} \to& \Rep^+_\DD & \Rep^+_\DD  \to & \Rep^+_{\End_{\DD}(\FF)^{op}} \nonumber\\
(V,\chi) \mapsto &\Hom_{\langle G , \alpha \rangle^{op}}\left((V,\chi)(K, \beta)\right) & \MM \mapsto & \Hom_\DD(\MM,\FF)  \label{contravariant functors},
\end{align}
where $\Hom_\DD(\MM,\FF)$ denotes strict morphisms of lifting bundle gerbes. The covariant functor compared with the composition of the contravariant functors gives us the following assignments:
\begin{align*}
\Rep^+_{\langle G , \alpha \rangle^{op}} \to& \Rep^+_{\CC^{op}}  &&& \Rep^+_{\langle G , \alpha \rangle^{op}} \to& \Rep^+_{\End_{\DD}(\FF)^{op}}\\
(G, \overline{\alpha}) \mapsto & \CC &&& (G, \overline{\alpha}) \mapsto & \End_{\DD}(\FF)\\
(K,\beta) \mapsto &\FF &&& (K,\beta) \mapsto & \Hom_\DD(\DD,\FF).
\end{align*}
From the previous assignments we see that the composition of the contravariant functors provides and equivalence of 2-categories,
and therefore we obtain as  byproduct the desired Morita equivalence.

We have seen in Theorem \ref{Theorem cohomology class} that the group in lifting bundle $U(1)$-gerbes
$\End_{\langle G , \alpha \rangle^{op}}(K, \beta)$ induces a multiplicative Segal-Mitchison $U(1)$-gerbes $\langle \widehat{G}, \widehat{\alpha}_{(K,\beta)} \rangle$. Take the category of its left representations $\Rep_{\langle \widehat{G} , \widehat{\alpha}_{(K,\beta)} \rangle}$
and restrict it to objects $(W, \eta)$ such that
$$H^0(\widehat{G} \ltimes W , \underline{U(1)})= U(1).$$
Denote this subcategory by $\Rep^+_{\langle \widehat{G} , \widehat{\alpha}_{(K,\beta)} \rangle}$.

\begin{theorem}
Let $\langle G, \alpha \rangle$ and $\langle \widehat{G}, \widehat{\alpha}_{(K, \beta)} \rangle$
be Pontrjagin dual multiplicative Segal-Mitchison $U(1)$-gerbes over the topological groups $G$ and $\widehat{G}$. Then, the 2-category $\Rep^+_{\langle G , \alpha \rangle^{op}}$ of right representations 
of $\langle G, \alpha \rangle$ is equivalent to the 2-category $\Rep^+_{\langle \widehat{G} , \widehat{\alpha}_{(K,\beta)} \rangle}$ of left representations of $\langle \widehat{G}, \widehat{\alpha}_{(K, \beta)} \rangle$.
\end{theorem}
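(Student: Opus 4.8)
The plan is to factor the desired $2$-equivalence through the two auxiliary $2$-categories of lifting bundle $U(1)$-gerbe representations already in play, $\Rep^+_{\CC^{op}}$ and $\Rep^+_{\DD}$, where $\CC = \End_{\langle G, \alpha \rangle^{op}}(G, \overline{\alpha})$ and $\DD = \End_{\langle G, \alpha \rangle^{op}}(K, \beta)$. Two of the three links do not involve the duality construction itself, and I would record them first. On the source side, the canonical covariant functor $(V, \chi) \mapsto \Hom_{\langle G, \alpha \rangle^{op}}((G, \overline{\alpha}), (V, \chi))$ is an equivalence $E_G \colon \Rep^+_{\langle G, \alpha \rangle^{op}} \to \Rep^+_{\CC^{op}}$, because $(G, \overline{\alpha})$ is the canonical representation and $\CC$ its endomorphism $2$-group (asserted above in this section). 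On the target side, Theorem \ref{Theorem cohomology class} says the group lifting bundle $U(1)$-gerbe $\DD$ has underlying group $\widehat{G}$ and classifying class $[\widehat{\alpha}_{(K,\beta)}] \in H^3(\widehat{G}_\bullet, \underline{U(1)})$, i.e.\ $\DD$ is a strictification of $\langle \widehat{G}, \widehat{\alpha}_{(K,\beta)} \rangle$; applying the monoidal $2$-equivalence between Segal-Mitchison $U(1)$-gerbes and lifting bundle $U(1)$-gerbes of Appendix B (Theorem \ref{theorem equivalence SM-gerbes LB-gerbes}) to their weak module objects should give an equivalence $\Rep^+_{\DD} \simeq \Rep^+_{\langle \widehat{G}, \widehat{\alpha}_{(K,\beta)} \rangle}$, under which the left $\DD$-action becomes the left action of $\langle \widehat{G}, \widehat{\alpha}_{(K,\beta)} \rangle$. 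Granting these, the theorem reduces to showing that the contravariant functor $F_1 \colon \Rep^+_{\langle G, \alpha \rangle^{op}} \to \Rep^+_{\DD}$, $(V, \chi) \mapsto \Hom_{\langle G, \alpha \rangle^{op}}((V, \chi), (K, \beta))$ (with $\DD$ acting by post-composition, hence on the left, matching the left representations in the statement), is an equivalence.

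To prove $F_1$ is an equivalence I would pair it with the second contravariant functor $F_2 \colon \Rep^+_{\DD} \to \Rep^+_{\End_\DD(\FF)^{op}}$, $\MM \mapsto \Hom_\DD(\MM, \FF)$, where $\FF = \Hom_{\langle G, \alpha \rangle^{op}}((G, \overline{\alpha}), (K, \beta))$. The heart of the matter is a pseudonatural ``evaluation'' transformation exhibiting $F_2 \circ F_1$ as $\Psi^{*} \circ E_G$, where $\Psi \colon \CC \to \End_\DD(\FF)$ is the equivalence of $2$-groups of Theorem \ref{theorem double dual} and $\Psi^{*} \colon \Rep^+_{\CC^{op}} \to \Rep^+_{\End_\DD(\FF)^{op}}$ is restriction of scalars along $\Psi$: on the generating object one gets $F_2 F_1(G, \overline{\alpha}) = \End_\DD(\FF)$ and $F_2 F_1(K, \beta) = \FF$, matching $\Psi^{*} E_G$, and I would check that the two $2$-functors agree on the hom-groupoids between these objects — this is where the restriction to $\Rep^+$ (so Hom-objects are genuine $U(1)$-gerbes) and the group-object structure of $\CC$ and $\DD$ do the work, reducing everything to the single canonical representation. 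Since $E_G$ is an equivalence and $\Psi^{*}$ is an equivalence (as $\Psi$ is one), $F_2 \circ F_1$ is an equivalence. I would then run the same argument with $\langle G, \alpha \rangle$ replaced by $\langle \widehat{G}, \widehat{\alpha}_{(K,\beta)} \rangle$ and $(K,\beta)$ replaced by the representation $(K, \widehat{\beta})$ of Proposition \ref{proposition birepresentation} — legitimate because Theorem \ref{theorem double dual} says that pair again satisfies the hypotheses of Theorem \ref{Theorem crossed module} and its fibrewise Pontrjagin dual is $\langle G, \alpha \rangle$ — obtaining a contravariant functor $F_1'$ in the opposite direction with $F_1 \circ F_1'$ an equivalence; since dualizing twice with respect to these representations is the identity by Theorem \ref{theorem double dual}, $F_1'$ is the quasi-inverse of $F_1$, so $F_1$ is an equivalence.

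Finally I would assemble the chain $\Rep^+_{\langle G, \alpha \rangle^{op}} \xrightarrow{F_1} \Rep^+_{\DD} \simeq \Rep^+_{\langle \widehat{G}, \widehat{\alpha}_{(K,\beta)} \rangle}$, yielding the equivalence between the $2$-category of right representations of $\langle G, \alpha \rangle$ and the $2$-category of left representations of $\langle \widehat{G}, \widehat{\alpha}_{(K,\beta)} \rangle$ asserted in the statement. The main obstacle I anticipate is the second paragraph: making the evaluation transformation $F_2 \circ F_1 \Rightarrow \Psi^{*} \circ E_G$ precise as a pseudonatural equivalence of $2$-functors and verifying it is invertible on all $2$-cells, together with organizing the symmetric instance so that $F_1'$ genuinely inverts $F_1$ rather than only making a composite an equivalence; both rest on the fact that on $\Rep^+$ every representation is controlled by the canonical one and its automorphism $2$-group, and on keeping careful track of strict versus weak morphisms of lifting bundle gerbes throughout.
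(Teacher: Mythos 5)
Your proposal is correct and follows essentially the same route as the paper: a Morita-context argument built on the bi-representation $\Hom_{\langle G, \alpha \rangle^{op}}((G, \overline{\alpha}),(K,\beta))$, the covariant functor to $\Rep^+_{\CC^{op}}$, the two contravariant Hom-functors, and the double-dual equivalence $\Psi$ of Theorem \ref{theorem double dual}, with the symmetric instance over $\langle \widehat{G}, \widehat{\alpha}_{(K,\beta)} \rangle$ and $(K,\widehat{\beta})$ supplying the quasi-inverse. Your write-up actually spells out more explicitly than the paper does how the composite $F_2 \circ F_1 \simeq \Psi^* \circ E_G$ and the dual-side functor together force $F_1$ itself to be an equivalence.
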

\begin{proof}
Our proof is based on the principles underlying a Morita context \cite[25A.15]{Rowen}.
Nevertheless,  we need to add extra equivalences of categories for the setup to work.
The relevant bi-representations were:
\begin{align*}
\Hom_{\langle G, \alpha \rangle^{op}}((G, \overline{\alpha}) ,(K,\beta) )\ \ \mbox{and} \ \ 
\Hom_{\langle \widehat{G} , \widehat{\alpha}_{(K,\beta)} \rangle}\left((\widehat{G}, \overline{\widehat{\alpha}}_{(K,\beta)}),(K ,\widehat{\beta}) \right)
\end{align*}
where the former is a $\End_{\langle G, \alpha \rangle^{op}}(K,\beta) - \End_{\langle G, \alpha \rangle^{op}}(G, \overline{\alpha})$ representation and the former is a
$\End_{\langle \widehat{G}, \widehat{\alpha}_{(K,\beta)} \rangle}(K,\widehat{\beta}) - \End_{\langle \widehat{G}, \widehat{\alpha}_{(K,\beta)} \rangle}(\widehat{G}, \overline{\widehat{\alpha}}_{(K,\beta)})$ representation.
The necessary equivalences of 2-groups are the following
\begin{align}
\End_{\langle G, \alpha \rangle^{op}}(K,\beta) & \simeq \End_{\langle \widehat{G}, \widehat{\alpha}_{(K,\beta)} \rangle}(\widehat{G}, \overline{\widehat{\alpha}}_{(K,\beta)}) \label{middle equivalence}\\
\End_{\langle G, \alpha \rangle^{op}}(G, \overline{\alpha}) & \simeq 
\End_{\langle \widehat{G}, \widehat{\alpha}_{(K,\beta)} \rangle}(K,\widehat{\beta}), \label{extremes equivalence}
\end{align}
where the first equivalence of  \eqref{middle equivalence} follows from Theorem \ref{Theorem cohomology class}
and the second equivalence of \eqref{extremes equivalence} follows from Theorem \ref{theorem double dual}.

The functors defined in \eqref{contravariant functors} together with the equivalence
of Theorem \ref{theorem double dual} provides the equivalence.
\end{proof}

\subsection{Equivalence of centers}

The endomorphism categories of \eqref{middle equivalence} and \eqref{extremes equivalence}
are strict tensor categories and the equivalence of 2-groups of Theorem \ref{theorem double dual}
implies that their centers are all tensor equivalent. Let us elaborate.

Let $\mathcal{C}$ be a monoidal category and let its center be
 $$\mathcal{Z}(\mathcal{C}): = \End_{\mathcal{C}- \mathcal{C}}(\mathcal{C}),$$
the endomorphisms of $\mathcal{C }$ which are compatible with the left and right monoidal structures.

\begin{theorem}
The centers of  $\End_{\langle G, \alpha \rangle^{op}}(G, \overline{\alpha})$ and $\End_{\langle G, \alpha \rangle^{op}}(K,\beta)$are equivalent:
$$  \mathcal{Z}(\End_{\langle G, \alpha \rangle^{op}}(G, \overline{\alpha})) \simeq
\mathcal{Z}(\End_{\langle G, \alpha \rangle^{op}}(K,\beta)).$$
\end{theorem}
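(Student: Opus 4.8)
The plan is to deduce this from Theorem \ref{theorem double dual}, which asserts that the right-composition functor $\Psi : \CC \to \End_\DD(\FF)$ is an equivalence of 2-groups, where $\CC = \End_{\langle G,\alpha\rangle^{op}}(G,\overline{\alpha})$, $\DD = \End_{\langle G,\alpha\rangle^{op}}(K,\beta)$ and $\FF = \Hom_{\langle G,\alpha\rangle^{op}}((G,\overline{\alpha}),(K,\beta))$. First I would observe that $\FF$ is a genuine $\DD$-$\CC$-bimodule object in lifting bundle $U(1)$-gerbes: the left $\DD$-action is by post-composition and the right $\CC$-action is by pre-composition, and these commute by associativity of composition (this is exactly the content used to define $\Psi$ in \eqref{functor Psi}). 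Moreover the analogous contravariant functor $\MM \mapsto \Hom_\DD(\MM,\FF)$ of \eqref{contravariant functors}, together with the covariant functor $(V,\chi)\mapsto \Hom_{\langle G,\alpha\rangle^{op}}((G,\overline{\alpha}),(V,\chi))$, realize $\FF$ as an invertible bimodule in the appropriate 2-categorical sense (this is the Morita context argument already invoked in the previous theorem). The point is that $\FF$ implements a Morita equivalence between $\CC$ and $\DD$ as monoidal categories — or more precisely as group objects in lifting bundle $U(1)$-gerbes — and a Morita equivalence of monoidal categories induces an equivalence of their centers.

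The key steps, in order, would be: (1) record that $\CC$ and $\DD$ are strict monoidal categories (strict tensor categories, in fact strict 2-groups in lifting bundle $U(1)$-gerbes) with tensor product given by composition of endomorphisms; (2) show that $\FF$, as a $\DD$-$\CC$-bimodule, is invertible, i.e.\ there is a $\CC$-$\DD$-bimodule $\FF^\vee$ with $\FF \otimes_\CC \FF^\vee \simeq \DD$ and $\FF^\vee \otimes_\DD \FF \simeq \CC$ as bimodules — here $\FF^\vee$ is $\Hom_{\langle G,\alpha\rangle^{op}}((K,\beta),(G,\overline{\alpha}))$, and the two equivalences are essentially the statements that $\CC \simeq \End_\DD(\FF)$ (Theorem \ref{theorem double dual}) and $\DD \simeq \End_\CC(\FF)$ (which follows symmetrically, or is the content leading to Theorem \ref{Theorem cohomology class}); (3) invoke the general principle that an invertible bimodule $\FF$ between monoidal categories $\CC$ and $\DD$ induces a monoidal equivalence $\ZZ(\CC) \simeq \ZZ(\DD)$, obtained by sending a half-braiding $(c,\sigma)$ on the unit of $\CC$ to the transported half-braiding on the unit of $\DD$ via conjugation by $\FF$ — concretely, an object of $\ZZ(\CC) = \End_{\CC\text{-}\CC}(\CC)$ is a $\CC$-$\CC$-bimodule endomorphism of the regular bimodule $\CC$, and $\FF \otimes_\CC (-) \otimes_\CC \FF^\vee$ carries this to a $\DD$-$\DD$-bimodule endomorphism of $\DD$, i.e.\ an object of $\ZZ(\DD)$; (4) check this assignment is monoidal and an equivalence because $\FF \otimes_\CC \FF^\vee \simeq \DD$ makes it essentially invertible with inverse $\FF^\vee \otimes_\DD (-) \otimes_\DD \FF$.

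The main obstacle I expect is step (3): the general statement ``Morita-equivalent monoidal categories have equivalent centers'' is classical for fusion categories and for monoidal categories internal to $\Vect$, but here $\CC$ and $\DD$ are group objects in the 2-category of lifting bundle $U(1)$-gerbes over topological groups, so one must either (a) carefully set up the relative tensor product $\otimes_\CC$ of bimodules in this topological 2-categorical context and verify the coherences, or (b) argue more directly, using the explicit form of the objects. Given the paper's style, I would favor route (b): exploit that $\FF$ has underlying space $K$ with commuting left $\widehat{G}$-action (by Theorem \ref{Theorem crossed module} and Proposition \ref{proposition birepresentation}) and right $G$-action, and that the equivalence $\Psi$ is realized on underlying groups by the identity on $K$; then an object of $\ZZ(\CC)$, being an endomorphism of $(G,\overline{\alpha})$ compatible with left and right $G$-multiplication, transports across $\FF$ to an endomorphism of $(K,\widehat{\beta})$ compatible with left and right $\widehat{G}$-multiplication, and one checks the half-braidings match under the identification of coefficient data coming from Theorem \ref{theorem double dual}. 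The coherence bookkeeping — that the associators $\alpha$ and $\widehat{\alpha}_{(K,\beta)}$ intertwine correctly with the half-braiding data under conjugation by $\FF$ — is the delicate part, but it is forced once one knows $\Psi$ is a monoidal equivalence and that $\FF$ is invertible as a bimodule.

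Finally, I would remark that since $\End_{\langle G,\alpha\rangle^{op}}(K,\beta)$ and $\End_{\langle G,\alpha\rangle^{op}}(G,\overline{\alpha})$ are, up to equivalence of 2-groups, the data attached to the Pontrjagin dual gerbe $\langle \widehat{G},\widehat{\alpha}_{(K,\beta)}\rangle$ and to $\langle G,\alpha\rangle$ respectively (Theorem \ref{Theorem cohomology class} and Example \ref{canonical representation}), this theorem is precisely the assertion that the monoidal centers of a multiplicative Segal-Mitchison gerbe and its fibrewise Pontrjagin dual agree — the categorical-center analogue of the equivalence of representation 2-categories proved just above.
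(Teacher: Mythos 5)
Your core idea --- Morita invariance of the center, implemented by the invertible $\DD$-$\CC$-bimodule $\FF=\Hom_{\langle G,\alpha\rangle^{op}}((G,\overline{\alpha}),(K,\beta))$ with Theorem \ref{theorem double dual} as the key input --- is exactly the idea the paper uses, so the proposal is correct in substance. The execution differs, though, in a way worth noting. You propose to transport half-braidings by conjugation, $\FF\otimes_{\CC}(-)\otimes_{\CC}\FF^{\vee}$, which forces you to construct relative tensor products of bimodules internal to lifting bundle $U(1)$-gerbes --- precisely the obstacle you flag in step (3). The paper sidesteps this entirely: it never forms $\otimes_{\CC}$ or introduces $\FF^{\vee}$, but instead identifies \emph{both} centers with the single category $\End_{\DD\text{-}\CC}(\FF)$ of $\DD$-$\CC$-bimodule endomorphisms of $\FF$. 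The identification $\ZZ(\CC)=\End_{\CC\text{-}\CC}(\CC)\simeq\End_{\DD\text{-}\CC}(\FF)$ is induced by the monoidal equivalence $\Psi:\End_{\CC}(\CC)\to\End_{\DD}(\FF)$ of Theorem \ref{theorem double dual} (which converts the left $\CC$-action on the regular bimodule into the left $\DD$-action on $\FF$, keeping the right $\CC$-action by pre-composition), and the identification $\ZZ(\DD)=\End_{\DD\text{-}\DD}(\DD)\simeq\End_{\DD\text{-}\CC}(\FF)$ comes from the chain $\DD\simeq\End_{\DD}(\DD)\simeq\End_{\CC^{op}}(\FF)$, i.e.\ the ``symmetric'' equivalence you mention in step (2). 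So your route (a) buys the standard explicit conjugation functor $\ZZ(\CC)\to\ZZ(\DD)$ at the cost of new machinery, whereas the paper's pivot through $\End_{\DD\text{-}\CC}(\FF)$ needs nothing beyond the equivalences already established (and is silent about an explicit formula for the induced functor); your concrete route (b) on underlying spaces is closer in spirit to the paper but is not how the argument is actually organized there. There is no genuine gap in your proposal relative to the paper's own level of detail --- the symmetric equivalence $\DD\simeq\End_{\CC^{op}}(\FF)$ is asserted with comparable brevity in the paper's proof.
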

\begin{proof}
Consider the following equivalences

$$\xymatrix@R09pt{
\mathcal{Z} \left(\End_{\langle G, \alpha \rangle^{op}}(G, \overline{\alpha}) \right) \ar[d]_\simeq
\\ \End_{\End_{\langle G, \alpha \rangle^{op}}(K, \beta) - \End_{\langle G, \alpha \rangle^{op}}(G, \overline{\alpha})} \left( \Hom_{\langle G, \alpha \rangle^{op}}((G, \overline{\alpha}),(K,\beta)) \right) \\
\mathcal{Z} \left(\End_{\langle G, \alpha \rangle^{op}}(K, \beta) \right). \ar[u]^\simeq
}$$
The uppper one is an equivalence  induced by the monoidal equivalence
$$\xymatrix@R09pt{
\End_{\End_{\langle G, \alpha \rangle^{op}}(G, \overline{\alpha})} \left(\End_{\langle G, \alpha \rangle^{op}}(G, \overline{\alpha}) \right) \ar[d]_{\simeq}^\Psi \\ 
\End_{\End_{\langle G, \alpha \rangle^{op}}(K, \beta)} \left( \Hom_{\langle G, \alpha \rangle^{op}}((G, \overline{\alpha}),(K,\beta)) \right)
}$$
defined
by $\Psi$ of Theorem \ref{theorem double dual}. The bottom is an equivalence which is induced by the following natural equivalences
$$\xymatrix@R09pt{
 \End_{\End_{\langle G, \alpha \rangle^{op}}(G, \overline{\alpha})^{op}} \left( \Hom_{\langle G, \alpha \rangle^{op}}((G, \overline{\alpha}),(K,\beta)) \right) \\
\End_{\End_{\langle G, \alpha \rangle^{op}}(K, \beta)} \left(\End_{\langle G, \alpha \rangle^{op}}(K, \beta) \right) \ar[u]^\simeq \\
\End_{\langle G, \alpha \rangle^{op}}(K, \beta). \ar[u]^\simeq
}$$
\end{proof}

From the previous Theorem we see that Pontrjagin dual multipliciative Segal-Mitchison gerbes will have equivalent centers. 
This result will implies interesting consequences on the topological invariants associated to Pontrjagin dual multiplicative Segal-Mitchison gerbes. We postpone its study to a forthcoming publication.

\section{Appendix A: Crossed modules and 2-groups} \label{Appendix}

Here we summarize the equivalent concepts of Crossed Module and 2-group, 
together with its morphisms and its classification.

\subsection{Crossed modules}
\label{sec:crossedmodules}

A topological crossed module $\phi: N \to E$ consists of topological groups $N$ and $E$, of a continuous homomorphism
$\phi$, and of a right action of $E$ on $N$ by group homomorphisms, denoted by $n^e$ for $n \in N$ and $e \in E$, such that
$\phi$ is $E$-equivariant
$$\phi(n^e) = e^{-1}\phi(n)e$$
and $\phi$ satisfies the Peiffer identity
$$n^{\phi(m)}=m^{-1}nm.$$

The kernel of $\phi$ is always abelian and the image of $\phi$ is normal in $E$. Therefore the crossed module
induces an exact sequence
$$A \to N \to E \to G$$
where $A = \ker(\phi)$, $G = \Coker(\phi)$, and there is an induced action of $G$ on $A$. The groups $A$ and $G$ 
are sometimes denoted by $\pi_1(N \stackrel{\phi}{\to} E)$ and $\pi_0(N \stackrel{\phi}{\to} E)$, respectively.

Whenever the group $N$ is abelian the Pfeiffer identity implies that the subgroup $\phi(N)$ of $E$ acts trivially on $N$. Therefore
a crossed module $\phi: N \to E$ with $N$ abelian is nothing else but a homomorphism of groups $\phi$ and an action
of $G$ on $N$ such that $\phi$ is $E$-equivariant. We will denote the crossed module by the homomorphism $\phi: N \to E$.

\subsection{2-groups}

A topological 2-group is a group object in topological groupoids, or equivalently
a groupoid object in topological groups. Let $\GG$ be the groupoid in topological groups
and denote by $\GG_1$ the group of morphisms, $\GG_0$ the group of objects, $\source, \target : \GG_1 \to \GG_0$
the source and target homomorphisms and $\ident : \GG_0 \to \GG_1$ the identity homomorphism. To a 2-group one may associate the groups $\pi_0(\GG)$ and $\pi_1(\GG)$, the first
being the isomorphism classes of objects of $\GG$ which becomes a group by the group operation of $\GG$, and
the second the group of automorphism of the identity object $1_{\GG_0 } \in \GG_0$. 
The group $\pi_1(\GG)$ is abelian
and is endowed with a right action of $\pi_0(\GG)$.
Strict morphisms of 2-groups are continuous functors that are group homomorphisms on the level of objects and of morphisms. A strict
morphism of 2-groups is an equivalence if it induces isomorphisms at the level of $\pi_1$ and $\pi_0$ \cite[Thm. 8.3.7 \& Cor. 8.3.8]{BaezLauda}.

\subsection{Equivalence of 2-groups and crossed modules}

2-groups and crossed modules are equivalent concepts. If we have a 2-group $\GG$ let $N= \Ker(\source) \subset \GG_1$,
$E=\GG_0$, $\phi=\target|_{N}$ and the right action of $E$ on $N$ is obtained by the equation $n^e:= \ident(e)^{-1}n\ident(e)$,
then $\target:N \to E$ is a crossed module. If we have a crossed module $\phi : N \to E$, define 
the action groupoid $[E/N]$ whose group of morphisms is 
$\GG_1 := E \times N$, whose group of objects is $\GG_0=E$, 
and the structural maps are $\source(e,n)=e$, $\target(e,n)=e\phi(n)$ and $\ident(e)=(e,1_N)$. The product structure
on $E \times N$ is given by the semidirect product, $(e,n) \cdot (e',n')=(ee', n^{e'}n)$.

Two extremal cases are worth highlighting, i.e. whenever one of the two groups $N$ or $E$ on the crossed module $N \to E$ is trivial. For $G$ a group, the crossed
module $\{0\} \to G$ induces the 2-group $G \twoarrows G$ where the source and the target maps are the identity; this
2-group is simply denoted by $G$, sometimes by $G[0]$. For $A$ an abelian group, the crossed module $A \to \{0\}$ induces the 2-group
$ A \twoarrows \{0\}$ which is sometimes denoted $A[1]$. A 2-group of the form $A[1] \times G$  defines
the crossed module $A \to G$ with trivial homomorphism and trivial action of $G$ on $A$.

\subsection{Morphisms of crossed modules}
A strict morphism of crossed modules consists of group homomorphisms $\psi_2: N \to \overline{N}$ and $\psi_1: E \to \overline{E}$ commuting
with $\phi$ and $\overline{\phi}$ and which respect the actions. A strict morphism is an equivalence of crossed modules if
the induced homomorphisms on $\pi_2$ and $\pi_1$ are isomorphisms \cite[Thm. 8.3.7 \& Cor. 8.3.8]{BaezLauda}.

Non-strict morphisms between crossed modules where
coined {\it{butterflies}} by Noohi  \cite[Def. 4.1.3]{AldrovandiNoohi} and they are the following. Take two crossed modules
$\phi: N \to E$ and $\overline{\phi}:\overline{N} \to \overline{E}$, then a butterfly from the first to the second consist
of a diagram 
$$
\xymatrix{
N \ar[rd]^{\kappa} \ar[dd]_{\phi} &&\overline{N} \ar[ld]_{\iota} \ar[dd]^{\overline{\phi}} \\
& P \ar[dl]_{\pi} \ar[dr]^{J} & \\
E && \overline{E}
}
$$
of topological groups and group homomorphisms, such that $\overline{N} \stackrel{\iota}{\to} P \stackrel{\pi}{\to} E$ is a group extension, and
$N \stackrel{\kappa}{\to} P \stackrel{J}{\to} \overline{E}$ is a complex. The maps must satisfy the equivariance conditions
$$\iota ( \overline{n}^{J(p)} ) = p^{-1}\iota(\overline{n}) p, \ \ \ \ \ \kappa ( n^{\pi(p)} )= p^{-1}\kappa(n)p$$
where $\overline{n} \in \overline{N}$, $p \in P$ and $n \in N$.

A morphism of butterflies $P$ and $P'$ (a 2-morphisms) consists of a group homomorphism $\psi: P \to P'$ such that the diagram
$$
\xymatrix{
N \ar[rd]\ar[dd]_{\phi} \ar[r]&P' \ar[rdd] \ar[ldd] &\overline{N} \ar[l] \ar[ld] \ar[dd]^{\overline{\phi}} \\
& P \ar[u] \ar[dl] \ar[dr] & \\
E && \overline{E}
}
$$
commutes and is compatible with all structural maps. 

A strict morphism between the crossed modules, consisting of homomorphisms $f_1:N \to \overline{N}$ and
$f_0: E \to \overline{E}$ commuting with $\phi$ and $\phi'$ and preserving the actions, does define a butterfly in
which $P=E \ltimes \overline{N}$, $\pi(e, \overline{n})=e$, $J(e, \overline{n})=f_0(e)\overline{\phi}(\overline{n})$,
$\iota(\overline{n})=(1,\overline{n})$ and $\kappa(n)=(\phi(n), f_1(n^{-1}))$. The product in $E \ltimes \overline{N}$ is
$$(e_1, \overline{n}_1) \cdot  (e_2, \overline{n}_2) =(e_1e_2, \overline{n}_1^{f_0(e_2)} \overline{n}_2).$$

\subsection{Classification of crossed modules} \label{subsection classification crossed modules}

Let us consider the category of crossed modules whose underlying groups are
locally contractible, compactly generated and paracompact. Then the isomorphism classes
of crossed modules in this setup are classified by a group $G$, an abelian group $A$,
which is moreover a $G$-module, and a Segal-Mitchison cohomology class in $H^3(G_\bullet, \underline{A})$.
This result follows from \cite[Prop. 2.3.4]{Breen-2}, cf.  \cite{Rousseau, Schommer-Pries}, when considering the isomorphism classes
of  crosses module extensions of the crossed module $G[0]$ by the crossed module $A[1]$.

Whenever the action of $G$ on $A$ is trivial, we may say that the crossed module
defines a multiplicative Segal-Mitchison gerbe with structure group $A$ in the sense of this article. 

From the short exact sequence $A \to E_{(G)}A \to B_{(G)} A$ defined in \eqref{short exact sequence A->E_GA -> B_GA},
we obtain the isomorphism $$H^3(G_\bullet, \underline{A})  \cong H^2(G_\bullet, \underline{B_{(G)}A})$$
since $E_{(G)}A = \Map(G,A)$ is $G$-soft. A class in $H^3(G_\bullet, \underline{A})$ defines a short exact sequence of groups
$$B_{(G)}A  \to W \to G,$$
which induces a crossed module whose four term exact sequence is
$$A \to E_{(G)}A  \to W \to G.$$
This procedure allow us to construct a crossed module extension of $G[0]$ by $A[1]$ for any element in $H^3(G_\bullet, \underline{A})$ in a canonical way.
 
\section{Appendix B: Lifting bundle gerbes} \label{Appendix B}
The bundle gerbes that appear in this work are of specific type and deserve some explanation. They were denoted {\it lifting bundle gerbes} by Murray \cite{Murray-1996} and they are constructed using principal bundles. For a comprehensive study of gerbes and its properties we refer to \cite{Waldorf-4} and the references therein.

Consider the category $\Topab$ of compactly generated and
locally contractible Hausdorff topological abelian groups and continuous homomorphisms. Let us fix a topological group $A$ in $\Topab$
which will be the structure group of the gerbes \cite{Brylinski}.

\subsection{Objects} 
\label{Objects lifting bundle gerbes}

A \emph{lifting bundle $A$-gerbe} is a pair $(N,E)$ consisting of a central extension $A \to N \to N/A$ and of a principal $N/A$-bundle $E$. A lifting bundle $A$-gerbe can be understood as  a representative for the problem to lift the structure group of  the principal bundle $E$ from $N/A$ to $N$. In this article, another perspective will be useful. 

Associated to any lifting bundle $A$-gerbe is a topological action groupoid $[E/N]$, where $N$ acts on $E$ through the projection $N \to N/A$. We will use the notation $[E/N]$ for lifting bundle $A$-gerbes below. Note that the isotropy group of each object is $A$. The base space of a lifting bundle $A$-gerbe $[E/N]$ is the base space $E/N$ of the principal bundle $E$; it is the space
of isomorphism classes of objects of the topological groupoid $[E/N]$.
 We will denote the lifting bundle $A$-gerbes also by the four term sequence
$$A \To N \circlearrowright E \To E/N\text{.}$$
Given the abelian group $A$, there are two canonical examples:
\begin{enumerate}
\item 
The \emph{canonical lifting $A$-bundle gerbe} is $[EBA/EA]$; here, the central extension is $A \to EA \to BA$, and the principal $BA$-bundle is $EBA \to B^2A$. As a  four term sequence it looks as follows:
$$A \To EA \circlearrowright EBA \To B^2A.$$ 

\item
The \emph{trivial lifting $A$-gerbe} is $[\ast/A]$, i.e., $N=A$ and $E=\ast$.  

\end{enumerate}

The Lifting bundle gerbes appearing in this work are ones with  $N$ abelian, which we then call abelian lifting $A$-gerbes. In most of the cases it is the degree 0 group $C^0$ of a complex, and the group $A$ is the zeroth cohomology group $H^0\subset C^0$. The space $E$
consists of  pairs of points on a fixed space together with elements in the degree 1 group $C^1$ of the complex. The action of $N$ on $E$ is simply
obtained by adding the image of the differential $d : C^0 \to C^1$.

\subsection{Morphisms}

Take two lifting bundle $A$-gerbes $[E/N]$ and $[D/M]$. A strict morphism is a continuous functor $\psi: [E/N]\to [D/M]$ that is $A$-equivariant at the level of morphisms. 

More explicitly, the functor $\psi$ is at the level of morphisms given by a continuous map
$\psi_1: E \times N \to D \times M$, and $A$ acts on both sides by multiplication on $N$ and $M$, respectively. The requirement is that $\psi_1$ is equivariant for these actions. 
Denoting for one moment the 1-category of lifting $A$-gerbes by $A$-$\Gerbe$ we have then:
\begin{multline*}
\Hom_{A\text{-}\Gerbe}([E/N], [D/M]) \\= \{\psi \in \Hom_{\Groupoid}([E/N], [D/M]) \mid \psi_1 \ \mbox{is} \ A\mbox{-equivariant}\}.
\end{multline*} 
It is important to notice that not all continuous functors are morphisms of  lifting$A$-gerbes. The most simple case on which this could be seen is the trivial gerbe $[*/A]$. On the one hand  $\Hom_{\Groupoid}([*/A],[*/A])=\Hom(A,A)$ consists off all endomorphism of groups of $A$. 
On the other hand $\Hom_{A\text{-}\Gerbe}([*/A],[*/A])$ consists of only the identity map. Notice also that the natural transformations
from the identity functor on $[A/*]$ to itself are parameterized by elements in $A$.

A special example of strict morphisms comes from principal bundle morphisms.
Let $E$ be a principal $N/A$-bundle, and let $D$ be a principal $M/A$-bundle, let $\phi: N \to M$ be an $A$-equivariant group homomorphism, and let $\psi_0: E \to D$ be a $\phi$-equivariant map, i.e., $\psi_0(e[n])=\psi(e)[\phi(n)]$. Then, $\psi_0$ and $\psi_1:=\psi_0 \times \phi$ from a strict morphism $\psi:[E/N] \to [D/M]$. Note that this works, in particular, when $M=N$ and $\phi=\id_N$.

\subsection{2-Morphisms}
Continuous natural transformations  provide the 2-morphisms that turn the 1-category of lifting bundle $A$-gerbes into a 2-category, which from now will be denoted by $A$-$\Gerbe$.

\subsection{Monoidal structure}  \label{monoidal structure A-gerbes}
The 2-category $A$-$\Gerbe$ of lifting bundle $A$-gerbes can be moreover be endowed with a monoidal structure. 
Take two lifting bundle $A$-gerbes $[E/N]$ and $[D/M]$ and define their product
 $$[E/N] \otimes  [D/M] : = [((E \times D) / (N \otimes M)]$$
where the group $N \otimes M$ is the quotient of $N \times M$ by its diagonal subgroup $\Delta:=\{(a,a^{-1}) \mid a \in A \}$. 
The trivial lifting $A$-gerbe $[\ast/A]$ is neutral with respect to this tensor product.

We remark that the underlying groupoid of $[E/N] \otimes  [D/M]$ is not the direct product of the groupoids $[E/N]$ and $[D/M]$, but that there is a continuous functor $[E/N] \times [D/M] \to [E/N] \otimes  [D/M]$ that is the identity on the level of objects. 
A continuous functor $\psi:[E/N] \times [E'/ N'] \to [D/M]$ induces 
a strict morphism $[E/N] \otimes [E'/ N'] \to [D/M]$ of  lifting bundle $A$-gerbes  if and only if its restriction to isotropy groups is the product  of the group $A$.
In terms of the trivial gerbe $[*/A]$, the only functor $[*/A] \times [*/A] \to [*/A]$ that induces
 a morphism of $A$-gerbes $[*/A] \otimes [*/A] \to [*/A]$ is the one induced by the product structure on $A$.

\subsection{Strict group  objects} \label{Group lifting bundle gerbes} A \emph{strictly multiplicative lifting bundle $A$-gerbe} is a strict group object in lifting bundle $A$-gerbes \cite{BaezLauda}. Thus, it is a lifting bundle $A$-gerbe $[E/N]$ together with  strict morphisms $[E/N] \otimes [E/N] \to [E/N]$ (multiplication), $[E/N] \to [E/N]$ (inversion), and $[\ast/A] \to [E/N]$ (unit), satisfying strictly the axioms of a group. Note that a strictly multiplicative lifting bundle $A$-gerbe
is in particular a group in the category of topological groupoids, i.e., a strict topological 2-group. In fact, these two conditions are equivalent:
\begin{lemma} \label{group object lifting bundle gerbe}
A lifting bundle $A$-gerbe
is strictly multiplicative if and only if the underlying groupoid is a strict topological 2-group. 
\end{lemma}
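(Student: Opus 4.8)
The plan is to prove both implications of the equivalence, one of which is essentially trivial and the other of which requires checking that a groupoid-theoretic group structure is automatically compatible with the gerbe (i.e. $A$-equivariance) conditions.

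First I would recall the dictionary between lifting bundle $A$-gerbes and crossed modules. Given a lifting bundle $A$-gerbe $[E/N]$, by the construction in \S\ref{subsection classification crossed modules} it corresponds to the crossed module $\phi = \target|_{\Ker(\source)} : \Ker(\source) \to \GG_0 = E$, and conversely the action groupoid $[E/N]$ arises from a crossed module with $N$ abelian, where $N$ sits as $\Ker(\source)$ and $E = \GG_0$. The statement of the lemma is then equivalent to: the data of strict morphisms of lifting bundle $A$-gerbes $[E/N]\otimes[E/N]\to[E/N]$, $[E/N]\to[E/N]$, $[*/A]\to[E/N]$ satisfying the group axioms is the same data as a strict topological 2-group structure on the underlying groupoid.

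The forward direction is immediate: a strict group object in $A$-$\Gerbe$ is in particular a group object in $\Groupoid$, since every strict morphism of lifting bundle $A$-gerbes is a continuous functor and the monoidal product $[E/N]\otimes[E/N]$ receives a canonical functor from the groupoid product $[E/N]\times[E/N]$ which is the identity on objects; composing with this functor turns the multiplication morphism into a genuine bifunctor $[E/N]\times[E/N]\to[E/N]$, and the group axioms, being equations of morphisms, transport along. For the converse, suppose the underlying groupoid $[E/N]$ is a strict topological 2-group, with multiplication bifunctor $\mult : [E/N]\times[E/N]\to[E/N]$, inversion $\invers$, and unit $\ident_* : [*/A] \to [E/N]$ picking out the identity object. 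I must check two things: (i) that $\mult$ descends to a strict morphism $[E/N]\otimes[E/N]\to[E/N]$ of lifting bundle $A$-gerbes, and (ii) that $\invers$ and the unit are themselves strict $A$-gerbe morphisms. For (i), by the criterion stated in \S\ref{monoidal structure A-gerbes}, a functor $[E/N]\times[E/N]\to[E/N]$ descends to the tensor product and is a morphism of $A$-gerbes precisely when its restriction to isotropy groups is the multiplication of $A$. But the isotropy group of each object of a 2-group is $\pi_1(\GG)$, which under the identification above is $A = \Ker(\phi)$, and the restriction of the group multiplication of $\GG_1$ to the isotropy group $\pi_1(\GG)\subset\GG_1$ is precisely the abelian group structure of $\pi_1(\GG)$ — this is the standard Eckmann–Hilton fact recalled in Appendix A that $\pi_1(\GG)$ is abelian with its group structure induced from $\GG_1$. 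Hence the isotropy restriction is the product on $A$, as required. For (ii), inversion restricted to isotropy is inversion in $A$, which is an $A$-gerbe morphism (it is a principal bundle morphism over $\invers$ in the sense of the special example in \S Morphisms), and the unit $[*/A]\to[E/N]$ is automatically an $A$-gerbe morphism since it is the identity on the one isotropy group $A$.

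The main obstacle, such as it is, is (i): one must be careful that "the restriction to isotropy of the groupoid multiplication equals the multiplication of $A$" really holds. The cleanest way is to invoke the Eckmann–Hilton argument — the two group structures on $\GG_1$ restricted to $\pi_1(\GG)$, namely the ambient multiplication of $\GG_1$ and the "vertical composition in the groupoid of automorphisms of $1$", agree and are commutative — together with the observation that for an action groupoid $[E/N]$ coming from a crossed module the automorphisms of the identity object $1_E$ are exactly $\Ker(\phi) = A$ sitting inside $\GG_1 = E\ltimes N$ as $\{(1,n) : \phi(n)=1\}$, on which the ambient group law of $E\ltimes N$ restricts to the group law of $A$. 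Once this identification is in place, the criterion from \S\ref{monoidal structure A-gerbes} applies verbatim and the remaining group-axiom diagrams (associativity, unit, inverse) are inherited from the 2-group structure since they are equalities of functors that already hold in $\Groupoid$ and involve only morphisms that we have just shown lie in $A$-$\Gerbe$.
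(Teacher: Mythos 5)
Your proposal is correct and follows essentially the same route as the paper's proof: the key point in both is that a 2-group structure forces the isotropy groupoid $[*/A]$ to carry the multiplication of the abelian group $A$ (the Eckmann--Hilton observation), which is exactly the criterion from the monoidal-structure section for a bifunctor to descend to a strict morphism $[E/N]\otimes[E/N]\to[E/N]$. The paper states only this core observation, whereas you additionally spell out the trivial forward direction and the checks for the unit and inversion; no substantive difference in approach.
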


\begin{proof}
 The reason for this phenomenon
is the fact that for a groupoid $[*/A]$ to become a 2-group it is necessary that $A$ is abelian and that the product structure $[*/A] \times [*/A] \to [*/A]$
coincides with the product structure of $A$. Hence, the structural properties of groups in groupoids impose the conditions
necessary for the product to induce one on lifting bundle $A$-gerbes.
\end{proof}

We remark that for a strictly multiplicative lifting bundle $A$-gerbe, the spaces $E$ and $E/N$ are in fact topological groups, and that there is a group homomorphism $N \to E$. Hence we obtain an exact  four term sequence
$$A \To N \To E \To E/N\text{.}$$
Under the correspondence between strict 2-groups and crossed modules,  the above 4-term sequence is precisely the one associated to  a crossed module, see \S \ref{sec:crossedmodules}.

The trivial lifting bundle $A$-gerbe $[\ast/A]$ is strictly multiplicative in a unique way, and the canonical lifting bundle $A$-gerbe $[EBA/EA]$ is strictly multiplicative for the given group structure on $EBA$.
 Also, the 2-groups of endomorphisms
$\End_{\langle G, \alpha \rangle^{op}}(G, \overline{\alpha})$, $\End_{\langle G, \alpha \rangle}(K, \beta)$ and
$$\End_{\End_{\langle G, \alpha \rangle^{op}}(K, \beta)} \left( \Hom_{\langle G, \alpha \rangle^{op}}((G, \overline{\alpha}),(K,\beta)) \right) $$
defined in Example \ref{canonical representation}, Theorem \ref{Theorem crossed module} and Theorem \ref{theorem double dual}
respectively are strictly multiplicative  lifting bundle $U(1)$-gerbes.

\subsection{Representations of strictly multiplicative lifting bundle gerbes}

Strict group objects may act on objects. In our case, lifting bundle $A$-gerbes representations of group objects in lifting bundle $A$-gerbes  form a 2-category. This 2-category
is constructed in the usual way \cite{BaezLauda}. Objects are lifting bundle $A$-gerbes representations
of the specific group object in lifting bundle $A$-gerbes, morphisms are morphisms of lifting bundle $A$-gerbes compatible
with the action, and 2-morphisms are the natural transformations.

An example of a representation of a group in lifting bundle $U(1)$-gerbes is the lifting bundle $U(1)$-gerbe 
$\Hom_{\langle G, \alpha \rangle^{op}}((G, \overline{\alpha}),(K,\beta))$ of Proposition \ref{proposition birepresentation}. It is a right representation of the group lifting bundle $U(1)$-gerbe $\End_{\langle G, \alpha \rangle^{op}}(G, \overline{\alpha})$ and a left
representation of the  group lifting bundle $U(1)$-gerbe $\End_{\langle G, \alpha \rangle^{op}}(K, \beta)$.

\subsection{Weak morphisms}

Strict morphisms as defined above are not enough to capture the essence of the bicategory of lifting bundle
 $A$-gerbes (though, for the purpose of this article they suffice to discuss group objects).
There is a well established notion of weak morphisms between topological groupoids that correctly reflects the equivalence between topological stacks, known as anafunctors, bibundles, or Hilsum-Skandalis morphisms, see, e.g., \cite{Haefliger1984,Moerdijk1991,pronk}.  An $A$-equivariant (smooth) version has been introduced in \cite[App. A]{Waldorf-4}. Adapted to our setting, we obtain the following. A {\it weak morphism} between
lifting bundle $A$-gerbes $[E/N]$ and $[D/M]$ consists of the following structure:
\begin{itemize}
\item a space $F$  together with maps $\mu: F \to E$ and $\nu: F \to D$,
\item an action of $N$ on $F$ making $\mu$ $N$-equivariant,
\item an action of $M$ on $F$ making $\nu$ $M$-equivariant and $\mu$ a principal $M$-bundle,
\end{itemize}
such that the actions of $A \subset N$ and $A \subset M$ agree on $F$. Weak morphisms will be pictured as diagrams
$$
\xymatrix{
& F \ar[dl]_\mu \ar[rd]^\nu &\\
E && D.
}
$$
Whenever the map $\mu : F \to D$ is also an $N$-principal bundle, the weak morphism becomes a {\it weak isomorphism}, whose inverse is obtained by flipping the sides.
In this case we obtain homeomorphisms 
$$F/D \cong E, \ \ F/M \cong D, \ \ E/N \cong D/M$$ and the following diagram induce equivalence
of topological groupoids
$$
\xymatrix{
& [F/(M \times N)] \ar[dl]_\mu^\simeq \ar[rd]^\nu_\simeq &\\
[E/N] && [D/M].
}
$$

We shall see now that a strict morphism induces a weak one. Indeed, suppose $\Psi: [E/N] \to [D/M]$ is a strict morphism. We obtain a map $\psi : E/N \to D/M$ of the underlying spaces and we may consider the space $\Delta^*(E \times \psi^*D)$ where $\Delta : E/N \to E/N \times E/N$ is the diagonal map. There is a canonical action of $N \times M$ on
$\Delta^*(E \times \psi^*D)$ and canonical projection maps
$$\Delta^*(E \times \psi^*D) \to E \ \ \mbox{and} \ \ \Delta^*(E \times \psi^*D) \to D$$
where the first one is an $M$-principal bundle. The space $\Delta^*(E \times \psi^*D)$ together with the action and
the maps is the weak morphism induced by $\Psi$.

Weak morphisms are related through weak 2-morphisms, which are maps $F \to F'$ that exchange the maps to $E$ and $D$, and are equivariant with respect to the actions of $M$ and $N$. 
We refer the reader to the references above for a detailed study of the
2-categorical structure.
Now we are ready to show the following result.
\begin{theorem}  \label{theorem equivalence SM-gerbes LB-gerbes}
Let us consider the 2-category of Segal-Mitchison gerbes with structure group $A$ and the 2-category
lifting bundle $A$-gerbes with weak morphisms and weak 2-morphisms. Then,
the functor
\begin{align*}
A\text{-}\SMGrbs \to & A\text{-}\Gerbe\\
(M,\alpha)  \mapsto & [( \alpha^*EBA)/EA],
\end{align*}
is an equivalence of 2-categories.
\end{theorem}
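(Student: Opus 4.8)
The plan is to exhibit the functor $(M,\alpha) \mapsto [(\alpha^*EBA)/EA]$ as an equivalence of bicategories by checking that it is essentially surjective on objects and an equivalence on each Hom-category, which is the standard way to verify a biequivalence. A more structural alternative is to compare cohomology classifications, but since we want an honest functor it is cleaner to work with the explicit assignments already written down in \S\ref{SM gerbes objects}, \S\ref{SM gebres morphisms}, \S\ref{SM gebres 2-morphisms}.

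First I would make the functor explicit on all levels: on objects it is as stated; on morphisms it sends $(F,c):(M,\alpha)\to(N,\beta)$ to the strict morphism $[F,c]$ constructed in \S\ref{SM gebres morphisms} (promoted to a weak morphism via the construction at the end of Appendix B that turns a strict morphism of lifting bundle gerbes into a weak one); on 2-morphisms it sends $e$ to the natural transformation written in \S\ref{SM gebres 2-morphisms}. The functoriality on composition is recorded already (the composition formula $(H,d)\circ(F,c)=(H\circ F, F^*d+c)$ is strictly preserved), so the main thing to verify here is that the coherence 2-cells (associator, unitors) are sent to identities, which holds because the Segal-Mitchison 2-category is strict and the lifting-bundle-gerbe composition of the $[F,c]$'s is strictly associative.

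Next, essential surjectivity: given an arbitrary lifting bundle $A$-gerbe $[D/M']$, its underlying principal $M'/A$-bundle over $B:=D/M'$ has a classifying map $B\to B(M'/A)$; using the central extension $A\to M'\to M'/A$ one produces a map $B\to B^2A$ classifying the lifting obstruction — concretely, the composite $B \to B(M'/A) \to B^2A$ of the classifying map with the map induced by the connecting map of $A\to M'\to M'/A$. Since $B^2A$ represents $H^2(-,\underline A)$ (this is the identification $Z^2(M,\underline A)\cong\Map(M,B^2A)$ from \S\ref{SM gerbes objects}) we get $\alpha: B\to B^2A$, i.e.\ a Segal-Mitchison gerbe $(B,\alpha)$, and one checks $[(\alpha^*EBA)/EA]$ is weakly isomorphic to $[D/M']$ by constructing the comparison bibundle from a lift of the classifying map. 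The hardest part is fullness and faithfulness on Hom-categories: I need to show that every weak morphism $[\,(\alpha^*EBA)/EA\,]\to[\,(\beta^*EBA)/EA\,]$ is, up to weak 2-isomorphism, of the form $[F,c]$ for some $(F,c)$ with $\alpha-F^*\beta=\partial c$, and that two such $(F,c_1),(F,c_2)$ give weakly 2-isomorphic weak morphisms iff there is $e$ with $\partial e=c_2-c_1$. This amounts to a descent/classification argument: a weak morphism covering $F:M\to N$ gives an isomorphism of principal $BA$-bundles $\alpha^*EBA\cong F^*\beta^*EBA$, hence $\alpha-F^*\beta$ is nullhomotopic as a map to $B^2A$, hence (by the cohomology interpretation $H^l(X,\underline A)\cong[X,B^lA]$ together with the characterization of nullhomotopic maps as those lifting to $EB^{l-1}A$) lifts to $EBA$, i.e.\ equals $\partial c$; and the ambiguity in $c$ is exactly $B A$-bundle isomorphisms, controlled by $C^0(M,\underline A)$ modulo $\partial$. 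The bookkeeping that these identifications are compatible with composition and with the 2-categorical structure is the bulk of the work, but each individual step only uses the cohomology-of-spaces dictionary from \S\ref{section cohomology} plus the explicit formulas for $[F,c]$ and its natural transformations already set up in \S\ref{section SM gerbes}.

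I expect the main obstacle to be the comparison between \emph{weak} morphisms of lifting bundle gerbes and the rather rigid strict morphisms $[F,c]$: one must show that every weak morphism is weakly 2-isomorphic to (the weak morphism induced by) a strict one of the special form $[F,c]$, which requires choosing a global section of an appropriate bundle and verifying this choice is unique up to the expected gauge. Once this rectification statement is in place, together with the observation that $\Hom_{A\text{-}\SMGrbs}((M,\alpha),(N,\beta))$ is itself a lifting bundle gerbe with structure group $H^0(M,\underline A)$ (noted at the end of \S\ref{SM gebres 2-morphisms}) whose isomorphism classes and automorphism groups match those on the target side by the long exact sequence and the identifications $H^l(X,\underline A)\cong[X,B^lA]$, the equivalence on Hom-categories follows, and assembling essential surjectivity with this gives the biequivalence.
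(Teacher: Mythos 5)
Your proposal is correct and follows essentially the same route as the paper: essential surjectivity on objects via the map $E/N \to B^2A$ classifying the central extension $A \to N \to N/A$ (your ``connecting map'' is exactly the paper's $B\pi_2\circ\sigma$, built from the contractibility of $BEA$), and equivalence on Hom-categories by rectifying a weak morphism to one of the form $[F,c]$ using the homotopy between $\alpha$ and $F^*\beta$ together with the known full faithfulness of strict morphisms inside weak ones. The difficulty you flag --- that the given weak morphism must actually be weakly 2-isomorphic to the induced strict one, not merely cover the same base map --- is handled no more explicitly in the paper than in your sketch.
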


\begin{proof} The functor is defined in section \S \ref{SM gerbes objects} at the level of objects,
in section \S \ref{SM gebres morphisms} at the level of morphisms and in section \S \ref{SM gebres 2-morphisms}
at the level of 2-morphisms.

 Let us now show that the functor is essentially surjective. That is, for a lifting bundle $A$-gerbe $[E/N]$ we need to find a Segal-Mitchison $A$-gerbe
$\alpha : E/N \to B^2A$ and a weak isomorphism between $[E/N]$ and $[\alpha^*EBA / EA]$. For this we need some preparation.

Since the principal bundle $N/A \to E \to E/N$ is obtained by pullback from the universal one $N/A \to E(N/A) \to B(N/A)$, we will
focus our attention in finding a canonical Segal-Mitchison $A$-gerbe 
$$\alpha : B(N/A) \To B^2A$$
and a weak isomorphism from $[E(N/A)/N]$ to $[\alpha^* EBA/EA]$.

Consider the diagonal action of $A$ on $N \times EA$ and denote $N \times_A EA$ the quotient group. Consider the diagram of topological groups
$$
\xymatrix{
N \ar[dr] & & EA \ar[ld]\\
& N \times _A EA \ar[ld]_{\pi_1} \ar[rd]^{\pi_2} & \\
N/A && BA
}
$$
where the diagonals are central extensions of groups, and $\pi_1$ and $\pi_2$ are the homomorphisms
induced by projections on the first and second coordinates, respectively. Applying the classifying space functor $B$, we obtain the diagram
$$
\xymatrix{
BN \ar[dr] & & BEA \ar[ld]\\
&B( N \times _A EA )\ar[ld]_{B\pi_1} \ar[rd]^{B\pi_2} & \\
B(N/A) \ar@/_1pc/[ur]_\sigma && B^2A
}
$$
 and note that there exists a section $\sigma : B(N/A) \to B( N \times _A EA )$ of the fiber map $B\pi_1$
since the fiber $BEA$ is contractible. Take $$\alpha : = B \pi_2 \circ \sigma : B(N/A) \to B^2A$$ and let us show
that the lifting bundle $A$-gerbes $[E(N/A)/N]$ and $[\alpha^*EBA /EA]$ are weakly isomorphic.

The diagram of principal bundles
$$\xymatrix{
E(N/A) \ar[d]^{p_1} & E(N \times_A EA) \ar[l]_-{E \pi_1} \ar[r]^-{E\pi_2} \ar[d]_{p}& EA \ar[d]_{p_2} \\
B(N/A) \ar@/_1pc/[r]_\sigma & B(N \times_A EA) \ar[l]_-{B \pi_1}  \ar[r]^-{B \pi_2} & B^2A
}$$
induces the diagram of principal bundles over the same base $B(N/A)$
$$\xymatrix{
E(N/A) \ar[d]^{p_1} & \sigma^*E(N \times_A EA) \ar[l]_-{E \pi_1} \ar[r]^-{E\pi_2} \ar[d]_{p}& \alpha^*EA \ar[d]_{p_2} \\
B(N/A) \ar@{=}[r] & B(N/A) \ar@{=}[r] & B(N/A).
}$$
The space $\sigma^*E(N \times_A EA)$ has free commuting actions of both $N$ and $EA$ and their actions agree on $A$.
The bundle $E \pi_2 : \sigma^*E(N \times_A EA) \to \alpha^* EA$ is an $N$-principal bundle and the bundle
$E \pi_1 : \sigma^*E(N \times_A EA) \to E(N/A)$ is an $EA$-principal bundle. Hence, we have a weak isomorphism.  
 
Note that whenever $N=EA$ we may take $\sigma : B^2A \to B(EA \times_A EA)$ as $\sigma := B \Delta$ where $\Delta : BA \to EA \times_A EA$, $\Delta[\lambda] = [(\lambda,\lambda)]$. The composition becomes
$$\alpha = B \pi_2 \circ B \Delta = B(\pi_2 \circ \Delta) = B(id) = id$$
and therefore the map $\alpha : B^2A \to B^2A$ is the identity.

Let us now show that the functor is fully faithful. Consider two Segal-Mitchison gerbes $(M, \alpha)$ and $(N,\beta)$,
and let $F$ be a weak morphism between $[\alpha^*EBA/EA]$ and $[\beta^*EBA/EA]$. We have then maps
$$
\xymatrix{
& F \ar[dl]_\mu \ar[rd]^\nu &\\
\alpha^*EBA  && \beta^*EBA
}
$$ where $F$ is endowed with an action of $EA \times EA$ and
$\mu$ is a principal $EA$-bundle. The maps $\mu$ and $\nu$ induce maps at the level of quotient
spaces 
$$
\xymatrix{
& F/(EA \times EA) \ar[dl]_{\widetilde{\mu}}^\cong \ar[rd]^{\widetilde{\nu}} &\\
M=\alpha^*EBA/EA  && \beta^*EBA/EA=N
}
$$
where $\widetilde{\mu}$ is a homeomorphism and $\widetilde{\mu}^{-1}$ is its inverse. 
It is now straightforward to check that the maps $\alpha$ and $\beta \circ \widetilde{\nu} \circ  \widetilde{\mu}^{-1}$
are homotopic, where $\widetilde{\nu} \circ  \widetilde{\mu}^{-1} : M \to N$. This implies
that the functor
$$\Hom_{A\text{-}\SMGrbs}((M, \alpha),(N, \beta)) \to \Hom^{\text{weak}}_{A\text{-}\Gerbe}([\alpha^*EBA/EA],[\beta^*EBA/EA])$$
is essentially surjective. In order to see that it is fully faithful, we recall that morphisms of Segal-Mitchison gerbes are mapped to \emph{strict} morphisms of lifting bundle $A$-gerbes. It is well known from above references about weak morphisms, that the functor
\begin{equation*}
\Hom^{\text{strict}}_{A\text{-}\Gerbe}([E/N],[D/M])\to \Hom^{\text{weak}}_{A\text{-}\Gerbe}([E/N],[D/M])
\end{equation*}
is fully faithful. Thus, it suffices to show that the 2-morphisms between Segal-Mitchison gerbes correspond one-to-one with 2-morphisms between strict morphisms of the corresponding lifting gerbes. This follows directly from the definitions.
\end{proof}

\section*{Conflict of interest}
On behalf of all authors, the corresponding author states that there is no conflict of interest.

    \bibliographystyle{alpha}
\bibliography{Bibliography-Gerbes}

   \end{document}